\theoremstyle{plain}
\newtheorem{theorem}{Theorem}[section]
\newtheorem{lemma}[theorem]{Lemma}
\newtheorem{corollary}[theorem]{Corollary}
\newtheorem{proposition}[theorem]{Proposition}
\theoremstyle{definition}
\newtheorem{definition}[theorem]{Definition}
\newtheorem{assumption}[theorem]{Assumption}
\theoremstyle{remark}
\newtheorem{remark}[theorem]{Remark}
\newcommand{\supp}{\operatorname{supp}}
\renewcommand{\chi}{\mathbf{M}}
\numberwithin{equation}{section}
\newcommand{\bC}{\mathbb{C}}
\newcommand{\bN}{\mathbb{N}}
\newcommand{\bR}{\mathbb{R}}
\newcommand{\bH}{\mathbb{H}}
\newcommand{\bZ}{\mathbb{Z}}
\newcommand\cD{\mathcal{D}}
\newcommand\cH{\mathcal{H}}
\newcommand\cM{\mathcal{M}}
\newcommand\cQ{\mathcal{Q}}
\newcommand\cW{\mathcal{W}}
\newcommand\sL{\mathscr{L}}
\newcommand\sW{\mathscr{W}}
\providecommand{\norm}[1]{\lVert#1\rVert}
\def\dashint{\operatorname%
{\,\,\text{\bf--}\kern-.98em\DOTSI\intop\ilimits@\!\!}}
\begin{document}
\title[Nondivergence form degenerate parabolic equations]{Nondivergence form degenerate linear parabolic equations on the upper half space}

\author[H. Dong]{Hongjie Dong}
\address[H. Dong]{Division of Applied Mathematics, Brown University, 182 George Street, Providence RI 02912, USA}
\email{hongjie\_dong@brown.edu}

\author[T. Phan]{Tuoc Phan}
\address[T. Phan]{Department of Mathematics, University of Tennessee, 227 Ayres Hall,
1403 Circle Drive, Knoxville, TN 37996-1320, USA}
\email{phan@math.utk.edu}

\author[H. V. Tran]{Hung Vinh Tran}
\address[H. V. Tran]{Department of Mathematics, University of Wisconsin-Madison, Van Vleck Hall
480 Lincoln Drive
Madison, WI  53706, USA}
\email{hung@math.wisc.edu}

\thanks{
H. Dong is partially supported by the NSF under agreement DMS-2055244 and the Simons Fellows Award 007638.
H. Tran is supported in part by NSF CAREER grant DMS-1843320 and a Vilas Faculty Early-Career Investigator Award.
}
\subjclass[2020]{35K65, 35K67, 35K20, 35D30}
\keywords{Degenerate  linear parabolic equations; degenerate viscous Hamilton-Jacobi equations; nondivergence form; boundary regularity estimates; existence and uniqueness; weighted Sobolev spaces}

\begin{abstract}
We study a class of nondivergence form second-order degenerate linear parabolic equations in $(-\infty, T) \times \bR^d_+$ with the homogeneous Dirichlet boundary condition on $(-\infty, T) \times \partial \bR^d_+$, where $\bR^d_+ = \{x =(x_1,x_2,\ldots, x_d) \in \bR^d\,:\, x_d>0\}$ and $T\in {(-\infty, \infty]}$ is given.
The coefficient matrices of the equations are the product of $\mu(x_d)$ and bounded positive definite matrices, where $\mu(x_d)$ behaves like $x_d^\alpha$ for some given $\alpha \in (0,2)$, which are degenerate on the boundary $\{x_d=0\}$ of the domain.
Under a partially weighted VMO assumption on the coefficients, we obtain the wellposedness and regularity of solutions in weighted Sobolev spaces.
The results are applied to study the regularity theory of solutions to a class of degenerate viscous Hamilton-Jacobi equations.
 \end{abstract}

\dedicatory{Dedicated to Professor Mikhail Safonov on the occasion of his $70^{\text{th}}$ birthday}
\maketitle

\section{Introduction and main results}

\subsection{Settings}
Let $T\in (-\infty,\infty]$ and $\Omega_T=(-\infty,T)\times \bR^d_+$ with $\bR^d_+ = \bR^{d-1} \times \bR_+$ for $d \in \mathbb{N}$ and $\bR_+ = (0,\infty)$.
We study the following degenerate parabolic equation in nondivergence form
\begin{equation}\label{eq:main}
\begin{cases}
\sL u=\mu(x_d) f \quad &\text{ in } \Omega_T,\\
u=0 \quad &\text{ on } (-\infty, T) \times \partial \bR^d_+,
\end{cases}
\end{equation}
where $u: \Omega_T \rightarrow \bR$ is an unknown solution, $f: \Omega_T \rightarrow \bR$ is a given measurable forcing term, and
\begin{equation} \label{L-def}
\sL u =  a_0(z) u_t+\lambda c_0(z)u-\mu(x_d) a_{ij}(z)D_i D_j u.
\end{equation}
Here in \eqref{L-def}, $\lambda\ge 0$ is a constant, $z=(t,x) \in \Omega_T$ with $x = (x', x_d) \in \bR^{d-1} \times \bR_+$, $D_i$ denotes the partial derivative with respect to $x_i$, and $a_0, c_0: \Omega_T \rightarrow \bR$ and  $\mu: \bR_+ \rightarrow \bR$ are measurable and satisfy
\begin{equation} \label{con:mu}
 a_0(z), \ c_0(z), \ \frac{\mu(x_d)}{x_d^\alpha}  \in[\nu,\nu^{-1}], \quad \forall \ x_d \in \bR_+, \quad \forall \ z \in \Omega_T,
\end{equation}
for some given $\alpha\in (0,2)$ and $\nu \in (0,1)$. Moreover, $(a_{ij}): \Omega_T \rightarrow \mathbb{R}^{d\times d}$ are measurable and satisfy the uniform ellipticity and boundedness conditions
\begin{equation} \label{con:ellipticity}
\nu|\xi|^2 \leq a_{ij}(z) \xi_i \xi_j, \quad |a_{ij}(z)| \leq \nu^{-1}, \quad \forall \ z \in \Omega_T,
\end{equation}
for all $\xi = (\xi_1, \xi_2, \ldots, \xi_d) \in \bR^d$.

\smallskip
We observe that due to \eqref{con:mu} and \eqref{con:ellipticity}, the diffusion coefficients in the PDE in \eqref{eq:main} are degenerate when $x_d \rightarrow 0^+$, and singular when $x_d \rightarrow \infty$.  We also note that the PDE in \eqref{eq:main} can be written in the form
\[
[a_0(z) u_t + \lambda c_0(z)  u]/\mu(x_d) - a_{ij}(z) D_iD_j u = f \quad \text{in} \quad \Omega_T,
\]
in which the singularity and degeneracy appear in the coefficients of the terms involving $u_t$ and $u$. In the special case when $a_0 = c_0 =1, \mu(x_d) = x_d^\alpha$, and $(a_{ij})$ is an identity matrix, the equation \eqref{eq:main} is reduced to
\begin{equation} \label{simplest-eqn}
\left\{
\begin{array}{cccl}
u_t + \lambda u - x_d^\alpha \Delta u & = & x_d^\alpha f  & \quad \text{in} \quad \Omega_T, \\
 u & =& 0 & \quad \text{on} \quad (-\infty, T) \times \partial \bR^d_+,
\end{array} \right.
\end{equation}
in which the results obtained in this paper are still new.

\smallskip
The theme of this paper is to study the existence, uniqueness, and regularity estimates for solutions to \eqref{eq:main}. To demonstrate our results, let us state the following theorem which gives prototypical estimates of our results in a special weighted Lebesgue space $L_p(\Omega, x_d^\gamma\, dz)$ with the power weight $x_d^\gamma$ and norm
\[
\|f\|_{L_p(\Omega, x_d^\gamma dz)} = \left( \int_{\Omega_T} |f(t,x)|^p x_d^\gamma\, dx dt \right)^{1/p}.
\]
For any measurable function $f$ and $s \in \bR$, we define the multiplicative operator $(\chi^s f)(\cdot)=x_d^s f(\cdot)$.
\begin{theorem} \label{thm:demo}
Let $\alpha \in (0,2), \lambda >0$, $p\in (1,\infty)$, and $\gamma \in \big(p(\alpha-1)_+-1,2p-1\big)$. Then, for every $f \in L_p(\Omega, x_d^\gamma\, dz)$, there exists a unique strong solution $u$ to \eqref{simplest-eqn}, which satisfies
\begin{align} \label{show-est-1}
\|\chi^{-\alpha}u_t\|_{L_p}+\lambda\|\chi^{-\alpha}u\|_{L_p}
+\|D^2 u\|_{L_p}\le N\|f\|_{L_p}
\end{align}
and additionally
\begin{equation}\label{show-est-2}
\lambda^{1/2}\|\chi^{-\alpha/2}Du\|_{L_p}
\le N\|f\|_{L_p} \quad \text{if} \quad \gamma\in (\alpha p/2-1,2p-1),
\end{equation}
where $\|\cdot\|_{L_p} = \|\cdot \|_{L_p(\Omega, x_d^\gamma dz)}$ and $N=N(d,\nu,\alpha, \gamma, p)>0$. If $\frac{d+2+\gamma_+}{p}<1$, then the solution $u$ is also in $C^{{(1+\beta)}/2, 1+\beta}((-\infty, T) \times \overline{\bR}^{d}_+)$ with $\beta = 1- \frac{d+2+\gamma_+}{p}$.
\end{theorem}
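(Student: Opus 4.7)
My plan is to prove Theorem \ref{thm:demo} directly by exploiting the constant-coefficient structure of \eqref{simplest-eqn}, reducing to a one-parameter family of Bessel-type ODEs via a partial Fourier transform and then converting the resulting Green's function bounds into the stated weighted estimates.

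First I would extend $f$ by zero in time (so we may as well take $T=\infty$; the general $T$ case follows by truncation). Taking the partial Fourier transform in $(t,x')$ and dividing the PDE by $x_d^\alpha$, one obtains, for each fixed frequency $(\tau,\xi')$, the Dirichlet ODE
\[
-\hat u''(x_d) + \Big(|\xi'|^2 + (i\tau+\lambda)x_d^{-\alpha}\Big)\hat u(x_d) = \hat f(\tau,\xi',x_d),\quad \hat u(\tau,\xi',0)=0,
\]
on $(0,\infty)$. The substitution $s = \tfrac{2}{2-\alpha}\,x_d^{1-\alpha/2}$ conjugates $-\partial_{x_d}^2 + (i\tau+\lambda)x_d^{-\alpha}$ to a modified Bessel operator of order $\nu_\alpha = 1/(2-\alpha)$, from which one reads off an explicit Green's function $G_{\tau,\xi'}(x_d,y_d)$ built from $I_{\nu_\alpha}$ and $K_{\nu_\alpha}$, with sharp pointwise bounds on $G$, its $x_d$-derivatives, and on $(i\tau+\lambda)x_d^{-\alpha}G$.

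The next step is to convert these pointwise kernel bounds into weighted $L_p$ estimates, giving the boundedness of the solution operator $f\mapsto (\chi^{-\alpha}u_t,\ \lambda\chi^{-\alpha}u,\ D^2 u)$ on $L_p(\Omega_T, x_d^\gamma dz)$ with $\lambda$-independent constants. The tools would be a weighted Mikhlin-type multiplier theorem in $(t,x')$ combined with a one-dimensional weighted Hardy inequality in $x_d$; an alternative route is Rubio de Francia extrapolation from the unweighted $L_2$ estimate, which is immediate by Plancherel on the ODE. The admissible range $\gamma\in(p(\alpha-1)_+-1,2p-1)$, extending well past the standard Muckenhoupt window $(-1,p-1)$, should reflect the Dirichlet vanishing $\hat u \sim x_d^2$ near the boundary (two extra orders of integrability at zero) together with the $x_d^{-\alpha}$ prefactor in the left-hand side of the equation. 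Estimate \eqref{show-est-2} for the gradient would then follow by Hardy/Poincar\'e interpolation between the zeroth- and second-order estimates, which accounts for the stricter lower bound $\gamma>\alpha p/2-1$ in that case.

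Once the a priori estimates are in hand, existence follows by approximation: replace $x_d^\alpha$ by $(x_d+\epsilon)^\alpha$, solve the resulting uniformly parabolic Dirichlet problem by classical theory, invoke the uniform-in-$\epsilon$ a priori estimate, and pass to the limit in the weighted Sobolev space. Uniqueness is then the same a priori bound applied to the difference of two solutions with $f\equiv 0$. The main obstacle is the sharp weighted multiplier estimate on the full extended $\gamma$-range with $\lambda$-independent constant: since the standard Muckenhoupt $A_p$ class does not cover $\gamma\in(-1,2p-1)$, one cannot appeal to classical multiplier theory as a black box, and instead must exploit the boundary vanishing of the Bessel solutions together with careful pointwise analysis of the Green's function, and track the joint $\tau$- and $\lambda$-dependence (which enters only through the combination $i\tau+\lambda$) carefully — this is where the bulk of the technical work would sit.
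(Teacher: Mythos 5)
Your approach --- explicit Fourier/Bessel kernels in $x_d$ --- is genuinely different from the paper's real-variable mean-oscillation argument, but it has a concrete gap at the very step you flag as the difficulty. As a preliminary issue, the ODE $-\hat u'' + (|\xi'|^2 + (i\tau+\lambda)x_d^{-\alpha})\hat u = \hat f$ is \emph{not} a modified Bessel equation when $\xi'\neq 0$: the constant $|\xi'|^2$ breaks the scaling symmetry, and the substitution $s \propto x_d^{1-\alpha/2}$ only normalizes the $x_d^{-\alpha}$ term, so the Green's function involves a genuine two-scale matching rather than $I_{\nu_\alpha}$ and $K_{\nu_\alpha}$ alone. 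The decisive gap, however, is the passage from kernel bounds to the extended weight range $\gamma \in (p(\alpha-1)_+ -1, 2p-1)$: Mikhlin-type multiplier theorems and Rubio de Francia extrapolation both live inside the Muckenhoupt window $\gamma\in(-1,p-1)$ with respect to Lebesgue measure, and you offer no mechanism for escaping it beyond the phrase ``careful pointwise analysis of the Green's function.''

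What actually makes the extended range reachable in the paper is a change of \emph{underlying measure}, not a sharper kernel bound. The starting estimate on the restricted range $\gamma\in(p-1,2p-1)$ (Lemma \ref{l-p-sol-lem}) comes from the divergence-form results of \cite{DPT21} together with Krylov's cutoff-integration trick, not from Plancherel; then a Fefferman--Stein sharp-function argument (Lemmas \ref{prop:boundary}--\ref{oscil-lemma-2} and Theorem \ref{FS-thm}) is run relative to $\mu_1(dz)=x_d^{\gamma_1}\,dz$, with $\gamma_1$ chosen so that the residual power $x_d^{\gamma'}$ becomes an ordinary $A_p(\mu_1)$ weight --- that is how the theory is pushed below $\gamma=p-1$ and down to $\gamma=p(\alpha-1)_+-1$. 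Your heuristic for the extended range is also off: with a Dirichlet condition and $\alpha\in(0,2)$ the regular solution vanishes only linearly, $u\sim x_d$, not quadratically, as $x_d\to0$, and the cap $\gamma<2p-1$ is a trace obstruction (Remark \ref{remark-2-range}) rather than a boundary-decay dividend.
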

\noindent
See Corollary \ref{cor1} and Theorem \ref{thm:xd} for more general results.
We note that the ranges of $\gamma$ in \eqref{show-est-1}--\eqref{show-est-2} are optimal as pointed out in Remarks \ref{remark-1-range}--\ref{remark-2-range} below.
In fact, in this paper, a much more general result in weighted mixed-norm spaces is established in Theorem \ref{main-thrm}.
As an application, we obtain a regularity result for solutions to degenerate viscous Hamilton-Jacobi equations in Theorem \ref{example-thrm}.
To the best of our knowledge, our main results (Theorems \ref{thm:demo}, \ref{main-thrm}, \ref{thm:xd}, Corollary \ref{cor1}, and Theorem \ref{example-thrm}) appear for the first time in the literature.

\subsection{Relevant literature}
The literature on regularity theory for solutions to degenerate elliptic and parabolic equations is extremely rich, and we only describe results related to \eqref{eq:main}.

\smallskip
The divergence form of \eqref{eq:main} was studied by us in \cite{DPT21} with motivation from the regularity theory of solutions to degenerate viscous Hamilton-Jacobi equations of the form
\begin{equation}\label{eq:HJ-intro}
u_t+\lambda u-\mu(x_d) \Delta u=H(z,Du) \qquad \text{ in } \Omega_T.
\end{equation}
Here, $H:\Omega_T \times \bR^d \to \bR$ is a given Hamiltonian.
Under some appropriate conditions on $H$, we obtain a regularity {and solvability} result for %the solution to
\eqref{eq:HJ-intro} in Theorem \ref{example-thrm}.
Another class of divergence form equations, which is closely related to that in \cite{DPT21}, was analyzed recently in \cite{JinXiong2} %with a different range of powers where
when $\alpha<1$. When $\alpha=2$ and $ d=1$, a specific version of \eqref{eq:HJ-intro} gives the well-known  Black-Scholes-Merton PDE that appears in mathematical finance. The analysis for \eqref{eq:main} when $\alpha\geq 2$ is completely open.

\smallskip
A similar equation to \eqref{eq:main}, \eqref{simplest-eqn}, and \eqref{eq:HJ-intro}
\[
 u_t+\lambda u-\beta D_du  - x_d \Delta u = f \qquad \text{ in } \Omega_T
\]
with an additional structural condition $\beta>0$, an important prototype equation in the study of porous media equations and parabolic Heston equation, was studied extensively in the literature (see \cite{DaHa, FePo, Koch, JinXiong1, JinXiong2} and the references therein).
We stress that we do not require this structural condition in the analysis of \eqref{eq:main} and \eqref{eq:HJ-intro}, and thus, our analysis is rather different from those in \cite{DaHa, FePo, Koch}.

\smallskip
We note that similar results on the wellposedness and regularity estimates in weighted Sobolev spaces for a different class of equations with singular-degenerate coefficients were established in a series of papers \cite{DP-20, DP-21, DP-JFA, DP-AMS}.
There, the weights of singular/degenerate coefficients of $u_t$ and $D^2u$ appear in a balanced way, which plays a crucial role in the analysis and functional space settings.
If this balance is lost, then Harnack's inequalities were proved in \cite{Chi-Se-1, Chi-Se-2} to be false in certain cases. However, with an explicit weight $x_d^\alpha$ as in our setting, it is not known if some version of Harnack's inequalities and H\"{o}lder estimates of the Krylov-Safonov type as in \cite{K-S} still hold for in \eqref{eq:main}.
Of course, \eqref{eq:main} does not have this balance structure, and our analysis is quite different from those in \cite{DP-20, DP-21, DP-JFA, DP-AMS}.

\smallskip
Finally, we emphasize again that the literature on equations with singular-degenerate coefficients is vast. Below, let us give some references on other closely related results.
The H\"{o}lder regularity %estimates
for solutions to elliptic equations with singular and degenerate coefficients, which are in the $A_2$-Muckenhoupt {class}, were proved in the classical papers \cite{Fabes, FKS}.  See also the books \cite{Fichera, OR}, %for classical work,
the papers \cite{KimLee-Yun, Sire-1, Sire-2}, and the references therein for other results on the wellposedness,  H\"{o}lder, and Schauder regularity estimates for various  classes of degenerate equations. Note also that the Sobolev regularity theory version of the %H\"{o}lder regularity
results in \cite{Fabes, FKS} was developed and proved in \cite{Men-Phan}. In addition, we would like to point out that equations with degenerate coefficients also appear naturally in geometric analysis  \cite{Lin, WWYZ}, in which H\"{o}lder and Schauder estimates for solutions were proved.

\subsection{Main ideas and approaches}
The main ideas of this paper are along the lines with those in \cite{DPT21}.
However, at the technical level, the proofs of our main results are quite different from those in \cite{DPT21}. More precisely,  instead of the $L_2$-estimates as in \cite{DPT21}, the starting point in this paper is the weighted $L_p$-result in Lemma \ref{l-p-sol-lem} which is based on the weighted $L_p$ for divergence form equations established in \cite{DPT21}, an idea introduced by Krylov \cite{Kr99}, together with a suitable scaling.  Moreover, while the proofs in \cite{DPT21} use the Lebesgue measure as an underlying measure, in this paper we make use of more general underlying measure $\mu_1 (dz) = x_d^{\gamma_1}$ with an appropriate parameter $\gamma_1$. In particular, this allows us to obtain an optimal range of exponents for power weights in Corollary \ref{cor1}. See Remarks \ref{remark-1-range} - \ref{remark-2-range}. Several new H\"{o}lder %regularity
estimates for higher order derivatives of solutions %in suitable weighted spaces
to a class of degenerate homogeneous equations  are  proved in Subsections \ref{subsec:boundary}--\eqref{subsec:int}. The results and techniques developed in these subsections %\ref{subsec:boundary}--\eqref{subsec:int}
{might be} of independent interest.

\subsection*{Organization of the paper}
The paper is organized as follows.
In Section \ref{sec:2}, we introduce various function spaces, assumptions, and  then state our main results.
The filtration of partitions, a quasi-metric, the weighted mixed-norm Fefferman-Stein theorem and Hardy-Littlewood theorem are recalled in Section \ref{preli}. A weighted parabolic embedding result is also proved in this section.
Then, in Section \ref{sec:3}, we consider \eqref{eq:main} in the case when the coefficients in \eqref{eq:main} only depend on the $x_d$ variable.
A special version of Theorem \ref{main-thrm}, Theorem \ref{thm:xd}, will be stated and proved in this section.
The proofs of Theorem \ref{main-thrm} and Corollary \ref{cor1} are given in Section \ref{sec:4}.
Finally, we study the degenerate viscous Hamilton-Jacobi equation \eqref{eq:HJ-intro} in Section \ref{sec:5}.
%%%%%%%%%%%%%%%%%%%%%%%%%%%%%%%%%%%%%%%%%%%%%%%%%%%%%%%%%%%%%%%%%%%%%%%
\section{Function spaces, parabolic cylinders, and main results}\label{sec:2}
\subsection{Function spaces}
Fix $p,q \in [1, \infty)$,  $-\infty\le S<T\le +\infty$, and a domain $\cD \subset \bR^d_+$. Denote by $L_p((S,T)\times \cD)$ the usual Lebesgue space consisting of measurable functions $u$ on $(S,T)\times \cD$ such that
\[
\|u\|_{L_p( (S,T)\times \cD)}= \left( \int_{(S,T)\times \cD} |u(t,x)|^p\, dxdt \right)^{1/p} <\infty.
\]
For a given weight $\omega$ on $(S,T)\times \cD$, let $L_{p}((S,T)\times \cD,\omega)$ be the weighted Lebesgue space on $(S,T)\times \cD$ equipped with the norm
\begin{equation*}
\|u\|_{L_{p}((S,T)\times \cD, \omega)}=\left(\int_{(S,T)\times \cD} |u(t,x)|^p \omega (t,x)\, dx dt\right)^{1/p}   < \infty.
\end{equation*}
For the weights $\omega_0=\omega_0(t)$, $\omega_1=\omega_1(x)$, and a measure $\sigma$ on $\cD$, set $\omega(t,x)=\omega_0(t)\omega_1(x)$ and define $L_{q,p}((S,T)\times \cD,\omega d\sigma)$ to be the weighted and mixed-norm Lebesgue space on $(S,T)\times \cD$ equipped with the norm
\[
\|u\|_{L_{q,p}((S,T)\times \cD, \omega d\sigma)}=\left(\int_S^T \left(\int_{\cD} |u(t,x)|^p \omega_1(x)\, \sigma(dx)\right)^{q/p} \omega_0(t)\,dt \right)^{1/q}   < \infty.
\]

\subsubsection{Function spaces for nondivergence form equations}
Consider $\alpha>0$. We define the solution spaces as follows.
Firstly, define
\[
 W_{p}^{1,2}((S,T)\times \cD, \omega)
 =\left\{u \,:\,  \chi^{-\alpha} u, \chi^{-\alpha} u_t, D^2u \in  L_p((S,T) \times \cD,\omega)\right\},
\]
where, for $u\in  W_{p}^{1,2}((S,T)\times \cD, \omega)$,
\begin{multline*}
\|u\|_{W^{1,2}_p((S,T)\times \cD,\omega)}\\
=\| \chi^{-\alpha} u\|_{L_p((S,T)\times \cD,\omega)}+\| \chi^{-\alpha} u_t\|_{L_p((S,T)\times \cD,\omega)}+\|D^2u\|_{L_p((S,T)\times \cD,\omega)}
\end{multline*}
and for $s \in \mathbb{R}$ the multiplicative operator $\chi^s$ is defined as $\chi^s f(\cdot)=x_d^s f(\cdot)$.

\smallskip
Let $\sW^{1,2}_p((S,T)\times \cD,\omega)$ be the closure in $W^{1,2}_p((S,T)\times \cD,\omega)$ of all compactly supported functions in $C^\infty((S,T)\times \overline{\cD})$ vanishing near $\overline{\cD} \cap \{x_d=0\}$ if $\overline{\cD} \cap \{x_d=0\}$ is not empty.
The space $\sW^{1,2}_p((S,T)\times \cD,\omega)$ is equipped with the same norm $\|\cdot\|_{\sW^{1,2}_p((S,T)\times \cD,\omega)}=\|\cdot\|_{W^{1,2}_p((S,T)\times \cD,\omega)}$.
When there is no time dependence, we write these two spaces as $W^2_p(\cD,\omega)$ and $\sW^2_p(\cD,\omega)$, respectively.

\smallskip
Next, denote by
\[
\begin{split}
& W_{q,p}^{1,2}((S,T)\times \cD, \omega\, d\sigma)\\
& \qquad =\left\{u \,:\,  \chi^{-\alpha} u, \chi^{-\alpha} u_t,D^2u \in  L_{q,p}((S,T) \times \cD,\omega\, d\sigma)\right\},
\end{split}
\]
which is equipped with the norm
\begin{multline*}
\|u\|_{W^{1,2}_{q,p}((S,T)\times \cD,\omega\, d\sigma)}
=\| \chi^{-\alpha} u\|_{L_{q,p}((S,T)\times \cD,\omega\, d\sigma)}\\
+\| \chi^{-\alpha} u_t\|_{L_{q,p}((S,T)\times \cD,\omega\, d\sigma)}+\|D^2u\|_{L_{q,p}((S,T)\times \cD,\omega\, d\sigma)}.
\end{multline*}
Let $\sW^{1,2}_{q,p}((S,T)\times \cD,\omega\, d\sigma)$ be the closure in $W^{1,2}_{q,p}((S,T)\times \cD,\omega d\sigma)$ of all compactly supported functions in $C^\infty((S,T)\times \overline{\cD})$ vanishing near $\overline{\cD} \cap \{x_d=0\}$ if $\overline{\cD} \cap \{x_d=0\}$ is not empty.
The space $\sW^{1,2}_{q,p}((S,T)\times \cD,\omega\, d\sigma)$ is equipped with the same norm $\|\cdot\|_{\sW^{1,2}_{q,p}((S,T)\times \cD,\omega\, d\sigma)}=\|\cdot\|_{W^{1,2}_{q,p}((S,T)\times \cD,\omega\, d\sigma)}$.

\subsubsection{Function spaces for divergence form equations}
We also need function spaces for divergence form equations in this paper, which are taken from \cite{DPT21}.
Set
$$
W^1_p((S,T)\times \cD, \omega)=\left\{u\,:\, \chi^{-\alpha/2} u, Du\in L_p((S,T)\times \cD, \omega)\right\},
$$
which is equipped with the norm
$$
\|u\|_{W^1_p((S,T)\times \cD,\omega)}=\| \chi^{-\alpha/2} u\|_{L_p((S,T)\times \cD,\omega)}+\|Du\|_{L_p((S,T)\times \cD,\omega)}.
$$
We denote by $\sW^1_p((S,T)\times \cD,\omega)$ the closure in $W^1_p((S,T)\times \cD,\omega)$ of all compactly supported functions in $C^\infty((S,T)\times \overline{\cD})$ vanishing near $\overline{\cD} \cap \{x_d=0\}$ if $\overline{\cD} \cap \{x_d=0\}$ is not empty.
The space $\sW^1_p((S,T)\times \cD,\omega)$ is equipped with the same norm $\|\cdot\|_{\sW^1_p((S,T)\times \cD,\omega)}=\|\cdot\|_{W^1_p((S,T)\times \cD,\omega)}$.

\smallskip
Set
 \[
\begin{split}
& \bH_{p}^{-1}( (S,T)\times \cD, \omega) \\
& =\big\{u\,:\, u  =  \mu(x_d) D_iF_i +f_1+f_2, \ \ \text{where}\  \chi^{1-\alpha} f_1,\chi^{-\alpha/2}f_2\in L_{p}( (S,T)\times \cD, \omega)\\
& \qquad\text{and }
 F= (F_1,\ldots,F_d) \in L_{p}((S,T)\times \cD, \omega)^{d}\big\},
\end{split}
\]
equipped with the norm
\begin{align*}
&\|u\|_{\bH_{p}^{-1}((S,T)\times \cD, \omega)} \\
&=\inf\big\{\|F\|_{L_{p}((S,T)\times \cD, \omega)}
+\|| \chi^{1-\alpha} f_1|+|\chi^{-\alpha/2}f_2|\|_{L_{p}((S,T)\times \cD, \omega)}\,:\\
&\qquad u= \mu(x_d) D_iF_i +f_1+f_2\big\}.
\end{align*}
Define
\[
 \cH_{p}^1((S,T)\times \cD, \omega)
 =\big\{u \,:\,  u \in  \sW^1_p((S,T) \times \cD,\omega)),
 u_t\in  \bH_{p}^{-1}( (S,T)\times \cD, \omega)\big\},
\]
where, for $u\in   \cH_{p}^1((S,T)\times \cD, \omega)$,
\begin{align*}
\|u\|_{\cH_{p}^1((S,T)\times \cD, \omega)} &= \|\chi^{-\alpha/2} u\|_{L_{p}((S,T)\times \cD, \omega)} + \|Du\|_{L_{p}((S,T)\times \cD, \omega)} \\
& \qquad +\|u_t\|_{\bH_{p}^{-1}((S,T)\times \cD, \omega)}.
\end{align*}

\subsection{Parabolic cylinders}
We use the same setup as that in \cite{DPT21}.
For $x_0 = (x_0', x_{0d}) \in \bR^{d-1} \times \bR_+$ and $\rho>0$, denote by $B_\rho(x_0)$ the usual ball with center $x_0$ radius $\rho$ in $\bR^d$, $B_\rho'(x_0')$ the ball center $x_0'$ radius $\rho$ in $\bR^{d-1}$, and
\[
B_\rho^+(x_0) = B_\rho(x_0) \cap \bR^d_+.
\]
We note that \eqref{eq:main} is invariant under the scaling
\begin{equation} \label{scaling}
(t,x) \mapsto (s^{2-\alpha} t, sx), \qquad s > 0.
\end{equation}
For $x_d \sim x_{0d} \gg 1$, $a_{ij} = \delta_{ij}$, and $\lambda =f=0$, then \eqref{eq:main}  behaves like a heat equation
\[
u_t  -x_{0d}^{\alpha} \Delta u = 0,
\]
which can be reduced to the heat equation with unit heat constant under the scaling
\[
(t,x) \mapsto (s^{2-\alpha} t, s^{1-\alpha/2} x_{0d}^{-\alpha/2}x), \quad s>0.
\]
It is thus natural to use the following parabolic cylinders in $\Omega_T$ in this paper.
For $z_0 = (t_0, x_0) \in (-\infty, T) \times \bR^d_+$ with $x_0= (x_0', x_{0d}) \in \bR^{d-1} \times \bR_+$ and $\rho>0$, set
\begin{equation} \label{def:Q}
\begin{split}
& Q_{\rho}(z_0) =  (t_0 - \rho^{2-\alpha}, t_0) \times B_{r(\rho, x_{0d})} (x_0), \quad \\
&Q_{\rho}^+(z_0) = Q_\rho(z_0) \cap \{x_d>0\},
\end{split}
\end{equation}
where
\begin{equation} \label{def:r}
r(\rho,x_{0d}) = \max\{\rho, x_{0d}\}^{\alpha/2} \rho^{1-\alpha/2}.
\end{equation}
Of course, $Q_{\rho}(z_0) = Q_{\rho}^+(z_0) \subset (-\infty, T) \times \bR^d_+$ for $\rho \in (0,x_{0d})$. For $z' = (t, x') \in \bR \times \bR^{d-1}$, we write
\[
Q_{\rho}'(z') = (t-\rho^{2-\alpha}, t_0) \times B_{\rho}'(x').
\]
Finally, when $x_0 =0, t_0=0$, for simplicity of notation we drop $x_0, z_0$ and write $B_\rho=B_\rho(0), Q_\rho=Q_\rho(0)$, and $Q_\rho^+=Q_\rho^+(0)$, etc.
\subsection{Main results} \label{main-result-sect} Throughout the paper, for a locally integrable function $f$, a locally finite measure $\omega$, and a domain $Q\subset \bR^{d+1}$, we write
\begin{equation} \label{everage-def}
(f)_{Q} = \fint_{Q} f(s,y)\, dyds, \qquad (f)_{Q,\omega} = \frac{1}{\omega(Q)}\int_{Q} f(s,y) \,\omega(dyds).
\end{equation}
Also, for a number $\gamma_1 \in (-1,  \infty)$ to be determined, we define
\[
\mu_1(dz) = x_d^{\gamma_1}\, dxdt.
\]
We impose the following assumption  on the partial mean oscillations of the coefficients $(a_{ij})$, $a_0$, and $c_0$.
\begin{assumption}[$\rho_0,\gamma_1, \delta$] \label{assumption:osc} For every $\rho \in (0, \rho_0)$ and $z_0= (z_0', z_{0d}) \in \overline{\Omega}_T$, there exist $[a_{ij}]_{\rho, z'}, [a_{0 }]_{\rho, z'},  [c_{0 }]_{\rho, z'}: ((x_{d} -r(\rho, x_d))_+, x_d + r(\rho, x_d)) \rightarrow \bR$ such that \eqref{con:mu}--\eqref{con:ellipticity} hold on $((x_{d} -r(\rho, x_d))_+, x_d + r(\rho, x_d))$  with  $[a_{ij}]_{\rho, z'}$, $[a_{0 }]_{\rho, z'}$, $[c_{0 }]_{\rho, z'}$ in place of $(a_{ij})$, $a_0$, $c_0$, respectively, and
\begin{align*}
a_\rho^{\#}(z_0):= & \max_{1 \leq i, j \leq d}\fint_{Q_\rho^+(z_0)} | a_{ij}(z) -[a_{ij}]_{\rho,z'}(x_d)|\,  \mu_1(dz) \\
& \qquad + \fint_{Q_\rho^+(z)} | a_{0}(z) -[a_{0}]_{\rho,z'}(x_d)|\, \mu_1(dz) \\
& \qquad + \fint_{Q_\rho^+(z)} | c_{0}(z) -[c_{0}]_{\rho,z'}(x_d)|\, \mu_1(dz) < \delta.
\end{align*}
\end{assumption}
\noindent
We note that the un-weighted partial mean oscillation was introduced  in \cite{Kim-Krylov} to study a class of elliptic equations with uniformly elliptic and bounded coefficients (i.e., $\gamma_1=\alpha=0$). Note also that by dividing the equation \eqref{eq:main} by $a_{dd}$ and adjusting $\nu$, we can assume without loss of generality throughout the paper that
\begin{equation} \label{add-assumption}
a_{dd} \equiv 1.
\end{equation}

\smallskip
The theorem below is the first main result of our paper, in which the definition of the $A_p$ Muckenhoupt class of weights can be found in Definition \ref{Def-Muck-wei} below.
\begin{theorem} \label{main-thrm} Let $T \in (-\infty, \infty]$, $\nu \in (0,1)$, $p, q, K \in (1, \infty)$, $\alpha \in (0, 2)$, and  $\gamma_1 \in (\beta_0 -\alpha, \beta_0 -\alpha +1]$ for $\beta_0 \in {(\alpha-1}, \min\{1, \alpha\}]$. Then, there exist $\delta = \delta(d, \nu, p, q, K, \alpha, \beta_0, \gamma_1)>0$ sufficiently small and $\lambda_0 = \lambda_0(d, \nu, p, q, K, \alpha, \beta_0, \gamma_1)>0$ sufficiently large such that the following assertion holds. Suppose that \eqref{con:mu}, \eqref{con:ellipticity}, and \eqref{add-assumption} are satisfied, $\omega_0 \in A_q(\bR)$, $\omega_1 \in A_p(\bR^d_+, \mu_1)$ with
\[
[\omega_0]_{A_q(\bR)} \leq K \quad \text{and} \quad [\omega_1]_{A_p(\bR^d_+, \mu_1)} \leq K, \quad \text{where} \,\, \mu_1(dz) = x_d^{\gamma_1} dxdt.
\]
Suppose also that Assumption \ref{assumption:osc} $(\rho_0, \gamma_1,\delta)$ holds for some $\rho_0>0$. Then, for any function $f \in L_{q, p}(\Omega_T,  x_d^{p(\alpha-\beta_0)} \omega\, d\mu_1)$ and $\lambda \geq \lambda_0 \rho_0^{-(2-\alpha)}$, there exists a strong solution $u{\in \sW^{1,2}_{q, p}(\Omega_T,   x_d^{p(\alpha-\beta_0)}  \omega\, d\mu_1)}$ to the equation \eqref{eq:main}, which satisfies
\begin{equation} \label{main-est-1}
\|\chi^{-\alpha} u_t\|_{L_{q,p}} + \|D^2u\|_{L_{q,p}} + \lambda \|\chi^{-\alpha} u\|_{L_{q,p}} \leq N \| f\|_{L_{q,p}},
\end{equation}
where $\omega(t, x) = \omega_0(t) \omega_1(x)$ for $(t,x) \in \Omega_T$, $L_{q,p} = L_{q,p}(\Omega_T, x_d^{p(\alpha-\beta_0)} \omega \, d\mu_1)$, %$\chi$ is defined in \eqref{chi-def},
and $N = N(d, \nu, p, q, K, \alpha, \beta_0, \gamma_1)>0$. Moreover, if $\beta_0 \in {(\alpha-1}, \alpha/2]$, then it also holds that
\begin{equation} \label{main-est-2}
\begin{split}
& \|\chi^{-\alpha} u_t\|_{L_{q,p}} + \| D^2u\|_{L_{q,p}} + \lambda \|\chi^{-\alpha} u\|_{L_{q,p}}  + \lambda^{1/2} \|\chi^{-\alpha/2} Du\|_{L_{q,p}}  \\
& \leq N \|f\|_{L_{q,p}}.
\end{split}
\end{equation}
\end{theorem}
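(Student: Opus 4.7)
The plan is to derive Theorem \ref{main-thrm} from its frozen-coefficient analog Theorem \ref{thm:xd} (for coefficients depending only on $x_d$), via a mean-oscillation / sharp-function argument on the quasi-metric measure space $(\Omega_T, \{Q_\rho\}, \mu_1)$, combined with the weighted mixed-norm Fefferman--Stein theorem recalled in Section \ref{Feffer}. Existence will follow by the method of continuity between the frozen operator and $\sL$, with the a priori estimate \eqref{main-est-1} closing the argument.

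For the a priori estimate, I would fix $z_0 \in \overline{\Omega}_T$ and $\rho \in (0, \rho_0)$, and on $Q_\rho^+(z_0)$ decompose a solution $u$ of \eqref{eq:main} as $u = w + v$, where $w$ solves the equation with frozen coefficients $[a_{ij}]_{\rho,z'}$, $[a_0]_{\rho,z'}$, $[c_0]_{\rho,z'}$ on the cylinder with the same boundary and initial data, and $v = u - w$ absorbs the perturbation from replacing the true coefficients by their $x_d$-only approximants. Applying Theorem \ref{thm:xd} to $v$ gives an $L_p(\mu_1)$-bound with right-hand side proportional to $a_\rho^\#(z_0)^{1/s'} < \delta^{1/s'}$ (via H\"older's inequality and the self-improving property of $A_p$, which is where the constraint $\gamma_1 \in (\beta_0 - \alpha, \beta_0 - \alpha + 1]$ enters). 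For the homogeneous piece $w$, the interior and boundary H\"older estimates for higher derivatives of degenerate homogeneous equations---to be proved in Subsections \ref{subsec:boundary}--\ref{subsec:int}---yield, for $\kappa \geq 1$,
\begin{equation*}
\fint_{Q_\rho^+(z_0)} \bigl| D^2 w - (D^2 w)_{Q_\rho^+(z_0)} \bigr|\, \mu_1(dz)
\leq N \kappa^{-\sigma} \Bigl( \fint_{Q_{\kappa\rho}^+(z_0)} |D^2 u|^s\, \mu_1(dz) \Bigr)^{1/s},
\end{equation*}
and analogously for $\chi^{-\alpha} u_t$ and $\lambda \chi^{-\alpha} u$.

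Combining these pieces produces a pointwise sharp-function bound of the form
\begin{equation*}
U^{\#}_{\mu_1}(z_0) \leq N \kappa^{-\sigma}\, \cM_{\mu_1}\!\left(|U|^s\right)\!(z_0)^{1/s} + N\kappa^{\tau}\delta^{1/s}\, \cM_{\mu_1}\!\left(|U|^s\right)\!(z_0)^{1/s} + N\, \cM_{\mu_1}\!\left(|f|^s\right)\!(z_0)^{1/s}
\end{equation*}
for $U = (\chi^{-\alpha} u_t, D^2 u, \lambda \chi^{-\alpha} u)$, where $\cM_{\mu_1}$ is the Hardy--Littlewood maximal function on the filtration of parabolic cylinders. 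Choosing $\kappa$ large, then $\delta$ small (in that order), applying the Fefferman--Stein sharp function theorem and the Hardy--Littlewood theorem with the tensor weight $\omega(t,x) = \omega_0(t)\omega_1(x)$, and finally imposing $\lambda \geq \lambda_0\rho_0^{-(2-\alpha)}$ to handle scales larger than $\rho_0$ through the zeroth-order term, yields \eqref{main-est-1}. The gradient bound \eqref{main-est-2} in the restricted range $\beta_0 \in (\alpha-1, \alpha/2]$ then follows from a weighted Hardy-type interpolation controlling $\lambda^{1/2}\|\chi^{-\alpha/2} Du\|_{L_{q,p}}$ by the geometric mean of $\lambda\|\chi^{-\alpha} u\|_{L_{q,p}}$ and $\|D^2 u\|_{L_{q,p}}$ already bounded via \eqref{main-est-1}; the hypothesis $\beta_0 \leq \alpha/2$ keeps the relevant weighted Hardy inequality in its validity range.

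The main obstacle will be establishing the H\"older-type decay $\kappa^{-\sigma}$ for higher-order derivatives of the homogeneous frozen-coefficient piece $w$, \emph{uniformly} in the degeneracy and uniformly in the position of the center $z_0$---crucially, across the two regimes $x_{0d} \ll \rho$ (where $r(\rho, x_{0d}) = \rho$) and $x_{0d} \gg \rho$ (where $r(\rho, x_{0d}) = x_{0d}^{\alpha/2}\rho^{1-\alpha/2}$) in which the cylinder $Q_\rho(z_0)$ has qualitatively different geometry by \eqref{def:r}. A secondary obstacle is to verify that $\mu_1$ together with the cylinders $\{Q_\rho(z_0)\}$ forms a doubling quasi-metric measure space satisfying the filtration axioms underlying the Fefferman--Stein framework of Section \ref{Feffer}; this is the structural reason for the restriction $\gamma_1 > -1$ and for the particular anisotropic scaling \eqref{scaling}.
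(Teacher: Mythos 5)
Your overall plan---freeze the coefficients, derive mean-oscillation estimates, and close via the weighted mixed-norm Fefferman--Stein theorem---is the paper's strategy at the top level, but several steps in your sketch either deviate in a way that would fail or omit ideas that are load-bearing.

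First, the vector $U$ in the oscillation argument cannot contain the full Hessian $D^2u$. Near $\{x_d=0\}$, only the mixed derivatives $DD_{x'}u$ (together with $\chi^{-\beta_0}u_t$ and $\lambda\chi^{-\beta_0}u$) enjoy boundary H\"older regularity for the homogeneous frozen equation; the normal second derivative $D_{dd}u$ does not, and the paper recovers it at the very end directly from the PDE after the bounds on the other entries of $U$. Also, the weight factors attached to the entries of $U$ must be $\chi^{-\beta_0}$ and $\chi^{\alpha-\beta_0}$ (against $\mu_1=x_d^{\gamma_1}dz$), not $\chi^{-\alpha}$, so that the final power weight $x_d^{p(\alpha-\beta_0)}$ emerges correctly.

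Second, your route to \eqref{main-est-2} via a ``weighted Hardy-type interpolation'' is a genuine gap: there is no off-the-shelf Gagliardo--Nirenberg inequality $\|\chi^{-\alpha/2}Du\|\lesssim\|\chi^{-\alpha}u\|^{1/2}\|D^2u\|^{1/2}$ in these degenerate weighted mixed-norm spaces, and a naive integration by parts leaves an unabsorbable term. The paper instead runs a parallel mean-oscillation argument for $Du$ against a shifted measure $\bar\mu_1=x_d^{\gamma_1+p_0(\alpha/2-\beta_0)}dz$, and the hypothesis $\beta_0\le\alpha/2$ enters so that the multiplier $\chi^{\alpha/2-\beta_0}$ on $Du$ has a nonnegative exponent (not to validate a Hardy inequality).

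Third, your decomposition has a circularity risk: you want $w$ to solve the frozen equation \emph{on the cylinder} with the same boundary and initial data, and then to estimate $v=u-w$ by Theorem \ref{thm:xd}; but Theorem \ref{thm:xd} is a solvability/estimate result on all of $\Omega_T$, not on a cylinder. The paper sidesteps this by letting the corrector $v$ solve the frozen equation on all of $\Omega_T$ with right-hand side $f\mathbf{1}_{Q_{14\rho}^+(z_0)}$ (so Theorem \ref{thm:xd}/Lemma \ref{l-p-sol-lem} applies), and then $w=u-v$ is homogeneous locally.

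Finally, the deduction of the full a priori estimate from the oscillation step is not a single sharp-function argument: the paper first proves the estimate for solutions whose time-support is short, then uses a partition of unity in $t$, and it is absorbing the $\xi'$ error term from that partition that produces the restriction $\lambda\ge\lambda_0\rho_0^{-(2-\alpha)}$; ``choosing $\kappa$ large and $\delta$ small'' alone does not yield that lower bound on $\lambda$.
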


The following is an important corollary of Theorem \ref{main-thrm} in which $\omega_1$ is a power weight of the $x_d$ variable and $\beta_0$ and $\gamma_1$ are specifically chosen.
\begin{corollary} \label{cor1} Let $T \in (-\infty, \infty]$, $\nu \in (0,1)$, $p, q \in (1, \infty)$, $\alpha \in (0, 2)$, and $\gamma \in (p(\alpha-1)_+ -1, 2p-1)$. Then, there exist $\delta = \delta(d, \nu, p, q, \alpha, \gamma)>0$ sufficiently small and $\lambda_0 = \lambda_0(d, \nu, p, q,  \alpha,  \gamma)>0$ sufficiently large such that the following assertion holds. Suppose that \eqref{con:mu}, \eqref{con:ellipticity} hold and suppose also that Assumption \ref{assumption:osc} $(\rho_0, 1-(\alpha-1)_+, \delta)$ holds for some $\rho_0>0$. Then, for any $f \in L_{q, p}(\Omega_T, x_d^{\gamma} dz)$ and $\lambda \geq \lambda_0 \rho_0^{-(2-\alpha)}$, there exists a strong solution $u \in \sW^{1,2}_{q, p}(\Omega_T,  x_d^\gamma\, dz)$ to the equation \eqref{eq:main}, which satisfies
\begin{equation} \label{cor-est-1}
\|\chi^{-\alpha} u_t\|_{L_{q,p}} + \|D^2u\|_{L_{q,p}} + \lambda \|\chi^{-\alpha} u\|_{L_{q,p}} \leq N \|f\|_{L_{q,p}},
\end{equation}
where $L_{q,p} = L_{q,p}(\Omega_T,  x_d^\gamma dz)$ and $N = N(d, \nu, p, q, \alpha, \gamma)>0$. If Assumption \ref{assumption:osc} $(\rho_0, 1-\alpha/2, \delta)$ also holds and $\gamma \in (\alpha p/2 -1, 2p-1)$, then we have
\begin{equation} \label{cor-est-2}
\|\chi^{-\alpha} u_t\|_{L_{q,p}} + \|D^2u\|_{L_{q,p}} + \lambda \|\chi^{-\alpha} u\|_{L_{q,p}} +   \lambda^{1/2} \|\chi^{-\alpha/2} Du\|_{L_{q,p}} \leq N \|f\|_{L_{q,p}}.
\end{equation}
Additionally, if $\frac{d+\gamma_+}{p} + \frac{2}{q} <1$, then the solution $u$ is also in $C^{{(1+\beta)}/2, 1+\beta}((-\infty, T) \times \overline{\bR}^{d}_+)$ with $\beta = 1- \frac{d+\gamma_+}{p} -\frac{2}{q}$.
\end{corollary}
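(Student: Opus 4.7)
\medskip

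\noindent\textbf{Proof plan for Corollary \ref{cor1}.}
The plan is to deduce both estimates directly from Theorem \ref{main-thrm} by specializing the parameters: take $\omega_0\equiv 1$, let $\omega_1(x)=x_d^{\gamma_2}$ be a pure power weight of the normal variable, and pick $\beta_0$, $\gamma_1$, and $\gamma_2$ so that the combined weight $x_d^{p(\alpha-\beta_0)}\omega_1(x)\,x_d^{\gamma_1}$ appearing in the statement of Theorem \ref{main-thrm} collapses to $x_d^{\gamma}$. The bulk of the argument is simply an algebraic matching of exponents, together with a check that $\omega_1$ lies in the required Muckenhoupt class $A_p(\bR^d_+,\mu_1)$. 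I expect no conceptual difficulty; the only delicate point is to verify that the admissible range for $\gamma$ stated in the corollary is exactly the range produced by the hypotheses of Theorem \ref{main-thrm}.

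For the estimate \eqref{cor-est-1}, I would split into two cases according to whether $\alpha\le 1$ or $\alpha>1$. The hypothesis is Assumption \ref{assumption:osc}$(\rho_0,1-(\alpha-1)_+,\delta)$, so $\gamma_1=1-(\alpha-1)_+$. The constraint $\gamma_1\in(\beta_0-\alpha,\beta_0-\alpha+1]$ together with $\beta_0\in(\alpha-1,\min\{1,\alpha\}]$ forces $\beta_0=\min\{1,\alpha\}$ in both cases. Matching the total power of $x_d$ to $\gamma$ then determines $\gamma_2=\gamma-p(\alpha-\beta_0)-\gamma_1$, and one checks that the stated range $\gamma\in(p(\alpha-1)_+-1,2p-1)$ is equivalent to $\gamma_2\in(-(\gamma_1+1),(p-1)(\gamma_1+1))$, which is the standard necessary and sufficient condition for the power weight $x_d^{\gamma_2}$ to lie in $A_p(\bR^d_+,\mu_1)$ with $\mu_1(dz)=x_d^{\gamma_1}dxdt$. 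A brief computation in each case confirms this: when $\alpha\le 1$ one has $\gamma_1=1$ and $\gamma_2=\gamma-1\in(-2,2p-2)$; when $\alpha>1$ one has $\gamma_1=2-\alpha$ and the corresponding arithmetic collapses the upper endpoint to $(p-1)(3-\alpha)$ and the lower to $-(3-\alpha)$. The $[\omega_1]_{A_p}$ constant depends only on $d,p,\gamma_1,\gamma_2$ and therefore is bounded by a constant $K=K(d,\nu,p,q,\alpha,\gamma)$, which gives the remaining ingredient needed to apply Theorem \ref{main-thrm} and conclude \eqref{cor-est-1}.

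For the gradient estimate \eqref{cor-est-2}, the additional hypothesis Assumption \ref{assumption:osc}$(\rho_0,1-\alpha/2,\delta)$ fixes $\gamma_1=1-\alpha/2$, and the requirement $\beta_0\in(\alpha-1,\alpha/2]$ from Theorem \ref{main-thrm} together with the constraint $\gamma_1\in(\beta_0-\alpha,\beta_0-\alpha+1]$ forces $\beta_0=\alpha/2$. Setting $\gamma_2=\gamma-p\alpha/2-(1-\alpha/2)$, a direct computation shows that $\gamma_2\in(-(\gamma_1+1),(p-1)(\gamma_1+1))$ is equivalent to $\gamma\in(p\alpha/2-1,2p-1)$, which is precisely the range stipulated in the corollary. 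Theorem \ref{main-thrm} then yields \eqref{cor-est-2}, completing the proof.

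The main obstacle, if one can call it that, is nothing more than keeping the bookkeeping of the three exponents $\beta_0$, $\gamma_1$, $\gamma_2$ consistent and confirming that the endpoint conditions for the $A_p$ condition on $x_d^{\gamma_2}$ relative to the base measure $\mu_1$ coincide with the endpoints of the stated range of $\gamma$. Since the argument is purely algebraic given Theorem \ref{main-thrm}, I expect no serious technical hurdle and the proof should take no more than a page.
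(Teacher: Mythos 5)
Your proof is correct and follows the same route as the paper's: you specialize Theorem \ref{main-thrm} with $\omega_0\equiv 1$, power weight $\omega_1(x)=x_d^{\gamma'}$, and the forced choices $\beta_0=\min\{1,\alpha\}$, $\gamma_1=1-(\alpha-1)_+$ for \eqref{cor-est-1} (resp.\ $\beta_0=\alpha/2$, $\gamma_1=1-\alpha/2$ for \eqref{cor-est-2}), then verify that the stated $\gamma$-range is equivalent to $\gamma'\in(-(1+\gamma_1),(p-1)(1+\gamma_1))$, i.e.\ $x_d^{\gamma'}\in A_p(\mu_1)$. The paper handles the two cases $\alpha\lessgtr 1$ uniformly via the notation $(\alpha-1)_+$ while you split explicitly, but that is a presentation difference only; the arithmetic matches in both cases.
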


\begin{remark}
By viewing solutions to elliptic equations as stationary solutions to parabolic equations, from Theorem \ref{main-thrm} and Corollary \ref{cor1}, we derive the corresponding results for elliptic equations. Also, by using a localization technique, similar results on local boundary $W^{1,2}_{q,p}$ estimates as those in \cite[Corollary 2.10]{DP-20} can be derived.
\end{remark}

In the remarks below, we give examples showing that the ranges of $\gamma$ in \eqref{show-est-1}--\eqref{show-est-2} as well as \eqref{cor-est-1}--\eqref{cor-est-2} are optimal.
We note  that the range of $\gamma$ for the estimate of $Du$ in \eqref{show-est-2}, \eqref{main-est-2}, and  \eqref{cor-est-2} is smaller than that for $u, u_t, D^2u$ in \eqref{show-est-1}, \eqref{main-est-1}, and   \eqref{cor-est-1}.
See Remark \ref{remark-3-range} below to see the necessity of such different ranges.

\begin{remark} \label{remark-1-range}
When $\alpha \in (0,1)$,  the range $(p(\alpha-1)_+ -1, 2p-1)$ for the power $\gamma$ in \eqref{show-est-1} becomes $(-1,2p-1)$, which agrees with the range in \cite{KN} for equations with uniformly elliptic and bounded coefficients.
See also \cite{DP-JFA} and \cite{MNS}
in which a similar range of the power $\gamma$  is also used in for a class of equations of extensional type.
When $\alpha \in [1,2)$, the lower bound $p(\alpha-1)_+ -1$ for $\gamma$ in \eqref{show-est-1} is optimal.
To see this, consider an explicit example when $d=1$, $\lambda>0$, $T < \infty$, and
\[
u(t,x)=\left(x + c x^{3-\alpha}\right) \xi(x) e^{\lambda t} \qquad \text{ for } (t,x) \in \Omega_T.
\]
Here, $\xi \in C^\infty([0,\infty),[0,\infty))$ is a cutoff function such that $\xi=1$ on $[0,1]$, $\xi=0$ on $[3,\infty)$, $\|\xi'\|_{L^\infty(\bR)} \leq 1$, and
\[
c=\frac{2 \lambda}{(3-\alpha)(2-\alpha)}.
\]
Set
\[
f(t,x)=x^{-\alpha}(u_t +\lambda u) - u_{xx}.
\]
Then, $u$ solves
\[
u_t + \lambda u - x^\alpha u_{xx} = x^\alpha f \qquad \text{ in } \Omega_T
\]
and it satisfies the boundary condition
\[ u(t, 0) =0 \quad \text{for} \,\, t \in (-\infty, T).
\]
Moreover, we see that $\chi^{-\alpha}u_t, \chi^{-\alpha}u \in L_p(\Omega_T, x^\gamma)$ for $\gamma>p(\alpha-1)-1$. We claim that
\begin{equation} \label{cout-claim-1}
f \in L_p(\Omega_T, x^{p(\alpha-1)-1}) \quad \text{but} \quad \chi^{-\alpha}u_t, \chi^{-\alpha}u \notin L_p(\Omega_T, x^{p(\alpha-1)-1}).
\end{equation}
To prove the claim \eqref{cout-claim-1}, we note that
\[
\begin{split}
& \int_{\Omega_T} |x^{-\alpha}u|^p x^{p(\alpha-1)-1}\,dz
=\int_{\Omega_T} |x^{-1}u|^p x^{-1}\,dz \\
&  \geq  \int_{0}^1 \int_{-\infty}^T x^{-1} e^{p\lambda t}\,dt dx  = N\int_{0}^1 x^{-1} \,dx=\infty.
\end{split}
\]
Thus, $\chi^{-\alpha}u_t, \chi^{-\alpha}u \notin L_p(\Omega_T, x^{p(\alpha-1)-1})$.

We next note that $f(t,x)=0$ for $(t,x) \in (-\infty,T] \times [3,\infty)$, and
\[
f(t,x)=2 c\lambda x^{3-2\alpha} e^{\lambda t} \qquad \text{ for } (t,x) \in (-\infty,T] \times [0,1].
\]
From this and
\[
\int_{0}^1 \int_0^T |x^{3-2\alpha}|^p x^{p(\alpha-1)-1} e^{p \lambda t} dt dx = N \int_0^1 x^{p(2-\alpha)-1} dx < \infty,
\]
it follows that  $f\in L_p(\Omega_T, x^{p(\alpha-1)-1})$ and \eqref{cout-claim-1} is verified.
\end{remark}

\begin{remark} \label{remark-3-range}
When $\alpha \in (0,2)$, the lower bound $\alpha p/2 -1$ for $\gamma$ in \eqref{show-est-2} is optimal.
Indeed, consider the same example as that in Remark \ref{remark-1-range} above.
%{\color{red}where $d=1$, $\lambda>0$, $T < \infty$, and
%\[
%u(t,x)=\left(x + c x^{3-\alpha}\right) \xi(x) e^{\lambda t} \qquad \text{ for } (t,x) \in \Omega_T.
%\]
%Here, $\xi \in C^\infty([0,\infty),[0,\infty))$ satisfies $\xi=1$ on $[0,1]$, $\xi=0$ on $[3,\infty)$, $\|\xi'\|_{L^\infty(\bR)} \leq 1$, and $c=2 \lambda(3-\alpha)^{-1}(2-\alpha)^{-1}$.
%Set
%\[
%f(t,x)=x^{-\alpha}(u_t +\lambda u) - u_{xx}.
%\]
%Then, $u$ solves
%\[
%u_t + \lambda u - x^\alpha u_{xx} = x^\alpha f \qquad \text{ in } \Omega_T.
%\]}
It is clear that $\chi^{-\alpha/2}u_x \in L_p(\Omega_T, x^\gamma)$ for $\gamma> \alpha p/2-1$.
On the other hand, $\chi^{-\alpha/2}u_x \notin L_p(\Omega_T, x^{\alpha p/2-1})$ as
\[
 \int_{\Omega_T} |x^{-\alpha/2}u_x|^p x^{\alpha p/2-1}\,dz
=\int_{\Omega_T} |u_x|^p x^{-1}\,dz \geq  \int_{0}^1 \int_{-\infty}^T x^{-1} e^{p\lambda t}\,dt dx =\infty.
\]
Besides, $f(t,x)=0$ for $(t,x) \in (-\infty,T] \times [3,\infty)$, and
\[
f(t,x)=2 c\lambda x^{3-2\alpha} e^{\lambda t} \qquad \text{ for } (t,x) \in (-\infty,T] \times [0,1].
\]
Hence, $f \in L_p(\Omega_T, x^{\alpha p/2-1})$ as
\[
\int_{0}^1 \int_0^T |x^{3-2\alpha}|^p x^{\alpha p/2-1} e^{p \lambda t} dt dx = N \int_0^1 x^{p(3-3\alpha/2)-1} dx < \infty.
\]
\end{remark}

\begin{remark} \label{remark-2-range}
We also have that the upper bound $\gamma < 2p-1$ in \eqref{show-est-1}--\eqref{show-est-2} is optimal.
Indeed, for $\gamma=2p-1$, the trace of  $W_{p}^{2}(\cD,x_d^{2p-1})$ is not well defined.
For simplicity, let $d=1$, $\cD=[0,1/2]$, and consider
\[
\phi(x) = \log (|\log x|).
\]
Then,
\[
\phi_{xx} = \frac{1}{x^2} \left( |\log x|^{-1} - |\log x|^{-2} \right).
\]
It is clear that $\phi \in W_{p}^{2}([0,1/2],x^{2p-1})$, and $\phi$ is not finite at $0$.
\end{remark}
\section{Preliminaries} \label{preli}
\subsection{A filtration of partitions and a quasi-metric} \label{Feffer}
We recall the construction of a filtration of partitions $\{\bC_n\}_{n \in \bZ}$ (i.e., dyadic decompositions) of $\bR\times \bR^d_+$ in \cite{DPT21},
which satisfies the following three basic properties (see \cite{Krylov}):
\begin{enumerate}[(i)]
\item The elements of partitions are ``large'' for big negative $n$'s and ``small''
for big positive $n$'s: for any $f\in L_{1,\text{loc}}$,
$$
\inf_{C\in \bC_n}|C|\to \infty\quad\text{as}\,\,n\to -\infty,\quad
\lim_{n\to \infty}(f)_{C_n(z)}=f(z)\quad\text{a.e.},
$$
where $C_n(z)\in \bC_n$ is such that $z\in C_n(z)$.

\item The partitions are nested: for each $n\in \bZ$, and $C \in \bC_n$, there exists a unique $C' \in \bC_{n-1}$ such that $C \subset C'$.

\item The following regularity property holds: For $n,C, C'$ as in (ii), we have
$$
|C'|\le N_0|C|,
$$
where $N_0>0$ is independent of $n$, $C$, and $C'$.
\end{enumerate}

For $s\in \bR$, denote by $\lfloor s \rfloor$  the integer part of $s$.
For a fixed $\alpha\in (0,2)$ and $n\in \bZ$, let $k_0=\lfloor -n/(2-\alpha) \rfloor$.
The partition $\bC_n$ contains boundary cubes in the form
$$
((j-1)2^{-n},j2^{-n}]\times (i_12^{k_0},(i_1+1)2^{k_0}]
\times\cdots\times (i_{d-1}2^{k_0},(i_{d-1}+1)2^{k_0}]\times (0, 2^{k_0}],
$$
where $j,i_1,\ldots,i_{d-1}\in \bZ$, and interior cubes in the form
$$
((j-1)2^{-n},j2^{-n}]\times (i_12^{k_2},(i_1+1)2^{k_2}]
\times\cdots \times (i_d2^{k_2}, (i_d+1)2^{k_2}],
$$
where $j,i_1,\ldots,i_{d}\in \bZ$ and
\begin{equation}
                    \label{eq:part1}
i_d2^{k_2}\in [2^{k_1},2^{k_1+1})\, \text{for some integer}\, k_1\ge k_0,
\quad k_2=\lfloor (-n+k_1\alpha)/2 \rfloor-1.
\end{equation}
It is clear that $k_2$ increases with respect to $k_1$ and decreases with respect to $n$.
As $k_1\ge k_0>-n/(2-\alpha)-1$, we have
$(-n+k_1\alpha)/2-1\le k_1$,
which implies $k_2\le k_1$ and $(i_d+1)2^{k_2}\le 2^{k_1+1}$.
According to \eqref{eq:part1}, we also have
$$
(2^{k_2}/2^{k_1})^2\sim 2^{-n}/(2^{k_1})^{2-\alpha},
$$
which allows us to apply the interior estimates after a scaling.

The quasi-metric  $\varrho: \Omega_\infty\times \Omega_\infty\to  [0,\infty)$ is defined as
$$
 \varrho((t,x),(s,y))=|t-s|^{1/(2-\alpha)}
+\min\big\{|x-y|,|x-y|^{2/(2-\alpha)}\min\{x_d,y_d\}^{-\alpha/(2-\alpha)}\big\}.
$$
There exists a constant $K_1=K_1(d,\alpha)>0$ such that
$$
 \varrho((t,x),(s,y))\le K_1\big(\varrho((t,x),(\hat t,\hat x))+ \varrho((\hat t,\hat x),(s,y))\big)
$$
for any $(t,x),(s,y),(\hat t,\hat x)\in \Omega_\infty$,  and $ \varrho((t,x),(s,y))=0$ if and only if $(t,x)=(s,y)$.
Besides, the cylinder $Q_\rho^+(z_0)$ defined in \eqref{def:Q} is comparable to
$$
\{(t,x)\in \Omega: t<t_0,\, \varrho((t,x),(t_0,x_0))<\rho \}.
$$
Of course, $(\Omega_T, \varrho)$ equipped with the Lebesgue measure is a space of homogeneous type and we have the above dyadic decomposition.
\subsection{Maximal functions and sharp functions} \label{sharp-function-sec}
The dyadic maximal function and sharp function of a locally integrable function $f$ and a given weight $\omega$ in $\Omega_\infty$ are defined as
\begin{align*}
\cM_{\text{dy},\omega} f(z)&=\sup_{n<\infty}\frac{1}{\omega(C_n(z))}\int_{C_n(z)\in \bC_n}|f(s,y)| \omega(s,y)\,dyds,\\
f_{\text{dy},\omega}^{\#}(z)&=\sup_{n<\infty}\frac{1}{\omega(C_n(z))}\int_{C_n(z)\in \bC_n}|f(s,y)-(f)_{C_n(z),\omega}|\omega(s,y)\,dyds.
\end{align*}
Observe that the average notation in \eqref{everage-def} is used in the above definition.  Similarly, the maximal function and sharp function over cylinders are given by
\begin{align*}
\cM_\omega f(z)&=\sup_{z\in Q^+_\rho(z_0), z_0\in \overline{\Omega_\infty}} \frac{1}{\omega(Q_\rho^+(z_0))} \int_{Q_\rho^+(z_0)}|f(s,y)|\omega(s,y)\,dyds,\\
f^{\#}_\omega(z)&=\sup_{z\in Q^+_\rho(z_0),z_0\in \overline{\Omega_\infty}}\frac{1}{\omega(Q_\rho^+(z_0))}\int_{Q_\rho^+(z_0)}|f(s,y)-(f)_{Q^+_\rho(z_0)}|\omega(s,y)\,dyds.
\end{align*}
We have, for any $z\in \Omega_\infty$,
$$
\cM_{\text{dy},\omega} f(z)\le N\cM_{\omega} f(z) \qquad \text{ and } \qquad f_{\text{dy},\omega}^{\#}(z)\le Nf^{\#}_\omega(z),
$$
where $N=N(d,\alpha)>0$.

\smallskip
We also recall the following definition of the $A_p$ Muckenhoupt class of weights.
\begin{definition}
        \label{Def-Muck-wei}
For each $p \in (1, \infty)$  and for a nonnegative Borel measure $\sigma$ on $\bR^d$, a locally integrable function $\omega :  \bR^d \rightarrow \bR_+$ is said to be in the $A_p( \bR^d,  \sigma)$ Muckenhoupt class of weights if and only if $[\omega]_{A_p(\bR^d, \sigma)} < \infty$, where
\begin{equation}
                    \label{Ap.def}
\begin{split}
& [\omega]_{A_p(\bR^d,  \sigma)} \\
& =
\sup_{\rho >0,x =(x', x_d)\in \bR^d } \bigg[\fint_{B_{\rho} (x)} \omega(y)\,  \sigma(dy) \bigg]\bigg[\fint_{B_{\rho}(x)} \omega(y)^{\frac{1}{1-p}}\,  \sigma(dy) \bigg]^{p-1}.
\end{split}
\end{equation}
Similarly, the class of weights $A_p(\bR^d_+,  \sigma)$ can be defined in the same way in which the ball $B_{\rho} (x)$ in \eqref{Ap.def} is replaced with $B_\rho^+(x)$ for $x\in \overline{\bR^d_+}$. For weights with respect to the time variable, the definition is similar with the balls replaced with intervals $(t_0 -\rho^{2-\alpha}, t_0 + \rho^{2-\alpha})$ and $\sigma(dy)$ replaced with $dt$. If $\sigma$ is a Lebesgue measure, we simply write $A_p(\bR^d_+) = A_p(\bR^d_+, dx)$ and $A_p(\bR^d) = A_p(\bR^d, dx)$.  Note that if $\omega \in A_p(\bR)$, then $\tilde{\omega} \in A_p(\bR^d)$ with $[\omega]_{A_p(\bR)} = [\tilde{\omega}]_{A_p(\bR^d)}$, where $\tilde{\omega}(x) = \omega(x_d)$ for $x = (x', x_d) \in \bR^d$. Sometimes, if the context is clear, we neglect the spatial domain and only write $\omega \in A_p$.
\end{definition}

\smallskip
The following version of the weighted mixed-norm Fefferman-Stein theorem and Hardy-Littlewood maximal function theorem can be found in \cite{Dong-Kim-18}.
\begin{theorem}  \label{FS-thm} Let $p, q \in (1,\infty)$,  $\gamma_1 \in (-1, \infty)$, $K\geq 1$, and $\mu_1(dz) = x_d^{\gamma_1}\, dxdt$. Suppose that $\omega_0\in A_q(\bR)$ and $\omega_1 \in A_p(\bR^{d}_{+},\mu_1)$ satisfy
$$
[\omega_0]_{A_q},    \,\, [\omega_{1}]_{A_p(\bR_+^d, \mu_1)}\le K.$$
Then, for any $f \in L_{q, p}(\Omega_T, \omega\, d\mu_1)$, we have
\begin{align*} %\label{Maximal-L-p}
& \|f\|_{L_{q, p}(\Omega_T,  \omega\, d\mu_1)}
{\leq N \| f^{\#}_{\text{dy},\mu_1}\|_{L_{q,p}(\Omega_T,  \omega\, d \mu_1)}}
\leq N \| f^{\#}_{\mu_1}\|_{L_{q,p}(\Omega_T,  \omega\, d \mu_1)}, \\
& \|\mathcal{M}_{\mu_1}(f)\|_{L_{q,p}(\Omega_T, \omega\, d\mu_1)} \leq N \|f\|_{L_{q, p}(\Omega_T,  \omega\,  d \mu_1)},
\end{align*}
where $N = N(d, q, p, \gamma_1, K)>0$ and $\omega(t,x) = \omega_0(t)\omega_1(x)$ for $(t,x) \in \Omega_T$.
\end{theorem}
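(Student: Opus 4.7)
The plan is to view $(\Omega_\infty,\varrho,d\mu_1)$ as a space of homogeneous type, deduce a single (unmixed) weighted Fefferman--Stein inequality and Hardy--Littlewood bound from the classical Coifman--Weiss theory, and then upgrade to mixed norms via an iteration that exploits the product structure of the weight $\omega(t,x)=\omega_0(t)\omega_1(x)$ and of the dyadic filtration $\{\bC_n\}$ recalled in Section~\ref{Feffer}.

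First I would verify the doubling property of $\mu_1$ on $\varrho$-balls. Using \eqref{def:Q} and \eqref{def:r}, one computes
\[
\mu_1(Q_\rho^+(z_0)) \sim \rho^{2-\alpha}\, r(\rho,x_{0d})^{d-1}\!\int_{(x_{0d}-r(\rho,x_{0d}))_+}^{x_{0d}+r(\rho,x_{0d})}\!\! x_d^{\gamma_1}\,dx_d,
\]
which is doubling in $\rho$ exactly because $\gamma_1>-1$, the two cases $\rho\le x_{0d}$ and $\rho>x_{0d}$ being handled separately via the explicit formula for $r(\rho,x_{0d})$. Combined with the quasi-triangle inequality and the nested-regular properties of $\bC_n$ already noted in the excerpt, this places us in the Coifman--Weiss framework. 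Consequently, for any $r\in(1,\infty)$ and any weight $W\in A_r(\Omega_\infty,\mu_1)$ with $[W]_{A_r}\le K'$, the classical weighted Fefferman--Stein inequality on spaces of homogeneous type, together with boundedness of the dyadic Hardy--Littlewood maximal operator, gives
\[
\|g\|_{L_r(W\,d\mu_1)} \le N \|g^{\#}_{\text{dy},\mu_1}\|_{L_r(W\,d\mu_1)}, \qquad \|\cM_{\mu_1} g\|_{L_r(W\,d\mu_1)} \le N \|g\|_{L_r(W\,d\mu_1)},
\]
with $N=N(d,r,\gamma_1,K')$. The pointwise bound $g^{\#}_{\text{dy},\mu_1}\le N g^{\#}_{\mu_1}$ (already noted in the excerpt) converts the dyadic sharp function into the cylinder sharp function.

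To pass to mixed norms, I would apply Rubio de Francia extrapolation in the form developed by Krylov and Dong--Kim. Since the dyadic cubes in $\bC_n$ factor as products of a time interval and a spatial cell that is comparable to a $\varrho$-ball on $\bR^d_+$, the hypotheses $\omega_0\in A_q(\bR)$ and $\omega_1\in A_p(\bR^d_+,\mu_1)$ with constants bounded by $K$ allow the following iteration: for each fixed $t$, the spatial Fefferman--Stein inequality produces $\|f(t,\cdot)\|_{L_p(\omega_1 d\mu_{1,x})} \le N\|f^{\#}_{\text{dy},\mu_1}(t,\cdot)\|_{L_p(\omega_1 d\mu_{1,x})}$ with $N$ uniform in $t$; then, raising to the $q$-th power and integrating against $\omega_0$, one applies the weighted one-dimensional maximal inequality in $t$ to obtain the mixed-norm assertion, with constant $N=N(d,q,p,\gamma_1,K)$. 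The bound for $\cM_{\mu_1}$ on $L_{q,p}(\omega\,d\mu_1)$ is obtained by the same iteration, using the $L_q(\omega_0)$-boundedness of the temporal maximal operator.

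The main obstacle will be executing this iteration cleanly: one must verify that the spatial sharp and maximal functions produced at fixed $t$ are in fact controlled, in a $t$-measurable way, by the parabolic sharp/maximal functions on $\Omega_\infty$, so that integration in $t$ does not lose the uniform constant. This is precisely what the product structure of $\bC_n$ and the doubling of $\mu_1$ guarantee, via a good-$\lambda$ argument between the dyadic sharp function and the dyadic maximal function with respect to the filtration $\{\bC_n\}$. Once this measurability and compatibility are established, both inequalities follow with constants $N=N(d,q,p,\gamma_1,K)$ as claimed.
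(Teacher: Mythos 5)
The paper does not prove this theorem: it is quoted directly from \cite{Dong-Kim-18}, so there is no proof in the manuscript to compare against verbatim. Your reconstruction, however, contains a genuine gap in the step passing from the single-weight inequality to the mixed-norm inequality.

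The first two parts of your sketch are sound: $\mu_1$ is doubling for $\varrho$-balls precisely because $\gamma_1>-1$, and this places $(\Omega_\infty,\varrho,\mu_1)$ in the Coifman--Weiss framework, giving the single-$r$ weighted Fefferman--Stein and Hardy--Littlewood inequalities for every $W\in A_r(\mu_1)$. The problem is your ``iteration'' to mixed norms. You propose to fix $t$, apply a spatial Fefferman--Stein inequality on the slice $\{t\}\times\bR^d_+$, and then integrate in $t$. But a spatial FS inequality at fixed $t$ would bound $\|f(t,\cdot)\|_{L_p(\omega_1\,d\mu_{1,x})}$ by the \emph{spatial-only} sharp function of $f(t,\cdot)$, which measures oscillation over spatial balls at that one time. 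That quantity is \emph{not} controlled by the parabolic sharp function $f^{\#}_{\mu_1}(t,\cdot)$: indeed for $f(t,x)=g(t)$ the spatial sharp function vanishes identically even though $f$ and $f^{\#}_{\mu_1}$ do not, and more generally averaging oscillation over a thin time slice can produce a far larger quantity than averaging over a full parabolic cylinder. The chaining you have in mind is valid for maximal functions (one easily has $\cM_{\mathrm{dy},\mu_1}f\le \cM_t\cM_x f$ because the dyadic cells are genuine products of a time interval and a spatial cell, so the mixed-norm bound for $\cM_{\mu_1}$ does follow by iterating the temporal and spatial maximal inequalities), but there is no analogous pointwise factorization of the sharp function, and the good-$\lambda$ comparison with the maximal function does not repair this.

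The correct route, and the one actually used in \cite{Dong-Kim-18}, is the Rubio de Francia extrapolation theorem in the form that starts from $L_{p_0}(W\,d\mu_1)$ estimates valid for \emph{all} $W\in A_{p_0}(\mu_1)$ with a constant depending only on $[W]_{A_{p_0}}$ and concludes mixed-norm $L_{q,p}$ estimates for product weights $\omega_0(t)\omega_1(x)$ with $\omega_0\in A_q(\bR)$, $\omega_1\in A_p(\bR^d_+,\mu_1)$. That argument is not a slice-by-slice iteration; it is based on the Rubio de Francia algorithm (constructing auxiliary $A_1$ weights via the iterated maximal function and applying duality), and it never requires controlling a fixed-time spatial sharp function by the parabolic sharp function. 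You name extrapolation as the tool, but then describe a different, flawed iteration; replacing that iteration with the actual extrapolation argument is what is needed to close the proof.
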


\subsection{Weighted  parabolic Sobolev embeddings}
We denote the standard parabolic cylinders by
\[
\mathcal{Q}_\rho(t,x) = (t-\rho^2, t) \times B_\rho(x), \quad \mathcal{Q}^+_\rho(t,x) = (t-\rho^2, t) \times B_\rho^+(x).
\]
When $x=0$ and $t=0$,  we write $\mathcal{Q}_\rho = \mathcal{Q}_\rho(0,0)$ and $\mathcal{Q}^+_\rho = \mathcal{Q}_\rho^+(0,0)$. Recall that for $\gamma \in \mathbb{R}$ and $p, q \in [1, \infty)$, we say $u \in L_{q,p}(\mathcal{Q}^+_1, x_d^\gamma dz)$ if
\[
\|u\|_{L_{q,p}(\mathcal{Q}^+_1, x_d^\gamma dz)} = \left(\int_{-1}^0 \Big ( \int_{B^+_1} |u(t,x)|^p x_d^\gamma\, dx\Big)^{q/p}\, dt \right)^{1/q} < \infty.
\]
We  denote
\[
\cW^{1,2}_{q,p}(\mathcal{Q}^+_1, x_d^\gamma dz) =\big\{ u: u_t, D^2 u \in  L_{q,p}(\mathcal{Q}^+_1, x_d^\gamma) \,\, \text{and} \ u,Du \in  L_{1, \text{loc}}(\mathcal{Q}^+_1) \big\}.
\]
%endowed with the norm
%\[
%\|u\|_{\cW^{1,2}_{q,p}(K_1, x_d^\gamma dz)} = \|\cW^{1,1}_{q,p}(K_1, x_d^\gamma dz)\|_{\cW^{1,1}_{q,p}(K_1, x_d^\gamma dz)} + \|D^2u\|_{L_{q,p}(K_1, x_d^\gamma dz)}
%\]
We prove some weighted parabolic Morrey inequalities that are needed to prove $C^{{(1+\beta)}/2, 1+\beta}$-regularity of solutions. See  \cite[Lemma 4.66]{Adams-book} and also \cite[Theorem 5.3]{RO} for similar results for the elliptic case. Let us recall that, for an open set $Q\subset \mathbb{R}^{d+1}$,
\[
\|u\|_{C^{(1+\beta)/2, 1+\beta}(Q)}=\|u\|_{L_\infty(Q)}+\|Du\|_{L_\infty(Q)}+\llbracket u\rrbracket_{C^{(1+\beta)/2, 0}(Q)}+\llbracket Du\rrbracket_{C^{\beta/2,\beta}(Q)}.
\]
Here, $\llbracket \cdot \rrbracket_{C^{(1+\beta)/2, 0}(Q)}, \llbracket \cdot \rrbracket_{C^{\beta/2,\beta}(Q)}$ are the usual H\"older-semi norms.

\begin{proposition}
            \label{imbedding-prop}
Let $\gamma{\in (-1,p-1)}$ and $p, q \in [1,\infty)$ so that $\beta = 1- \frac{d+\gamma_+}{p} - \frac{2}{q} >0$. Also, let ${\cD}$ be a non-empty open bounded set in $\overline{\mathcal{Q}_{1/2}^+}$. Then, there is $N = N(d, p, q, |\cD|, \gamma)>0$ so that the following assertion holds. If $u \in   \cW^{1,2}_{q,p}(\mathcal{Q}^+_1, x_d^\gamma dz)$, then
\begin{equation}
                        \label{eq9.24}
\|Du\|_{L_\infty(\mathcal{Q}^+_{1/2})}  \leq N  \Big[\|Du\|_{L_1(\mathcal{D})} + \|u_t\|_{L_{q,p}(\mathcal{Q}^+_1, x_d^\gamma dz)}  + \|D^2u\|_{L_{q,p}(\mathcal{Q}^+_1, x_d^\gamma dz)}  \Big]
\end{equation}
and
\begin{equation}
                        \label{eq9.25}
 |Du(t,x) - Du(s,y)| \leq N r^{\beta} \Big[ \|u_t\|_{L_{q,p}(\mathcal{Q}^+_1, x_d^{\gamma} dz)}  + \|D^2u\|_{L_{q,p}(\mathcal{Q}^+_1, x_d^{\gamma} dz)}  \Big],
\end{equation}
for every $(t,x), (s,y) \in \overline{\mathcal{Q}_{1/2}^+}$ and for $r = (|x-y|^2 + |t-s|)^{1/2}$. Moreover, we have
\begin{equation}
                        \label{eq9.25b}
 |u(t,x) - u(s,x)| \leq N |t-s|^{(1+\beta)/2} \Big[ \|u_t\|_{L_{q,p}(\mathcal{Q}^+_1, x_d^{\gamma} dz)}  + \|D^2u\|_{L_{q,p}(\mathcal{Q}^+_1, x_d^{\gamma} dz)}  \Big].
\end{equation}
%\begin{itemize}
%\item[\textup{(i)}] If $u \in C^{1,1}(\mathcal{Q}_1) \cap \cW^{1,1}_{q,p}(\mathcal{Q}_1, x_d^\gamma dz)$, then
%\begin{align*}
%& \|u\|_{L_\infty(\mathcal{Q}_{1/2})} \leq N \Big[\|u\|_{L_1(\mathcal{D})} + \|u_t\|_{L_{q,p}(\mathcal{Q}_1, x_d^\gamma dz)}  + \|Du\|_{L_{q,p}(\mathcal{Q}_1, x_d^\gamma dz)}  \Big]
%\\
%& \text{and} \\
%& |u(t,x) - u(s,y)| \leq N \rho^{\beta} \Big[ \|u_t\|_{L_{q,p}(\mathcal{Q}_1, x_d^{\gamma} dz)}  + \|Du\|_{L_{q,p}(\mathcal{Q}_1, x_d^{\gamma} dz)}  \Big],
%\end{align*}
%for every $(t,x), (s,y) \in \overline{\mathcal{Q}}_{1/2}$ and for $\rho = (|x-y|^2 + |t-s|)^{1/2}$.
%\item[\textup{(ii)}]
%\end{itemize}
\end{proposition}
\begin{proof}
%We only provide the proof of (ii) and skip the proof of (i) as it is similar and simpler.
We start with proving \eqref{eq9.25}. Let us denote $v= D_iu$ with some fixed $i =1, 2,\ldots, d$. {By the triangle inequality, w}e only need to prove the assertion with $r= (|x-y|^2 +|t-s|)^{1/2} \in (0,1/2)$ for $(t,x), (s,y) \in \overline{\mathcal{Q}_{1/2}^+}$. {Without loss of generality, we assume that $s\le t$.} Let {$(t_0, x_0)=((t+s)/2,(x+y)/2)+r e_d/2$, where $e_d=(0,\cdots,0,1)$, and $ \mathcal{Q} = \mathcal{Q}_{r/2}(t_0, x_0)\subset \cQ_{1}^+$}. Let $\psi \in C_0^\infty(\mathcal{Q})$ be a standard cut-off function satisfying
\begin{equation} \label{psi-hold-1}
0 \leq \psi \leq 2, \quad \|D\psi\|_{L_\infty} \leq \frac{N}{r}, \quad  \text{and} \quad \fint_{\mathcal{Q}} \psi(t,x)\, dtdx =1.
\end{equation}
%and
%\begin{equation} \label{avergag-W}
%\fint_{W} \psi(t,x) dtdx =1.
%\end{equation}
Then,
\begin{align}   \notag
&v(t,x) - v(s,y) = \fint_{\mathcal{Q}}\big(v(t,x) - v(s,y)\big) \psi(\bar{t}, \bar{x}) \,d\bar{t}d\bar{x} \\
=\ &   \fint_{\mathcal{Q}}\big( v(t,x) - v(\bar{t}, \bar{x})\big) \psi(\bar{t}, \bar{x}) \,d\bar{t}d\bar{x}
+ \fint_{\mathcal{Q}} \big(v(\bar{t}, \bar{x}) - v(s,y) \big)\psi(\bar{t}, \bar{x}) \,d\bar{t}d\bar{x} \notag\\
 =: & \ I_1 + I_2.\label{hold-1-proof}
\end{align}
%As $W \subset K_{3r/2}^+(t,x)$, $W \subset K_{3r/2}^+(s,y)$, and $|W| \sim r^{d+2}$, we infer that
%\begin{equation} \label{hold-1-proof}
%\begin{split}
%|v(t,x) - v(s,y)| & \leq N  \fint_{K_{3r/2}^+(t,x) }| v(t,x) - v(\bar{t}, \bar{x})| \psi(\bar{t}, \bar{x}) d\bar{t}d\bar{x}   \\
%& \quad + N \fint_{K_{3r/2}^+(s,y)} |v(s,y) -v(\bar{t}, \bar{x}) |\psi(\bar{t}, \bar{x}) d\bar{t}d\bar{x}.
%\end{split}
%\end{equation}
Next, we estimate the terms $I_1$ and $I_2$ on the right-hand side of \eqref{hold-1-proof}. By the fundamental theorem of calculus, we have
\[
\begin{split}
 v(t,x) - v(\bar{t},\bar{x})  & = \int_0^1\Big[2\theta v_t((1-\theta^2) t + \theta^2 \bar{t}, (1-\theta) x + \theta \bar{x})(t-\bar{t}) \\
 & \qquad + Dv((1-\theta^2) t + \theta^2 \bar{t}, (1-\theta) x + \theta \bar{x})\cdot (x-\bar{x}) \Big] d\theta.
\end{split}
\]
Then,  it follows from the Fubini theorem that
\begin{align*}
& I_1  : =   \fint_{\mathcal{Q}}\big ( v(t,x) - v(\bar{t}, \bar{x})\big)\psi(\bar{t}, \bar{x})\,d\bar{t}d\bar{x} \\
& = N r^{-(d+2)} \int_0^1\theta^{-1}  \left( \int_{\mathcal{Q}} \big(2 v_t(\tau,h)(t-\tau) + Dv(\tau, h)\cdot (x-h)\big) \psi(\bar{t},\bar{x})\, d\bar{t} d\bar{x} \right) d\theta,
\end{align*}
where we denote
\[
h = (1-\theta) x + \theta \bar{x} \quad \text{and} \quad \tau =(1- \theta^2) t + \theta^2 \bar{t}.
\]
We observe that
 \[
v_t (\tau, h) = D_i u_t(\tau, h).
\]
%Then, by using the integration by parts, we obtain
%\begin{align*}
%&   \int_{W}  v_t(\tau, h)(t-\tau)\psi(\bar{t},\bar{x}) d\bar{t} d\bar{x} \\
%&= -  \theta^{-1}\int_{W}   u_t(\tau, h)(t-\tau)D_i\psi(\bar{t},\bar{x}) d\bar{t}d\bar{x}.
%\end{align*}
From this,  \eqref{psi-hold-1},  and by {the change of variables} $\bar{t} \mapsto \tau$ and $\bar{x} \mapsto h$, and the integration by parts for the term involving $v_t$, we infer that
\[
|I_1|  \leq N r^{-(d+{1})} \int_0^1 \theta^{-(d+2)}\left(\int_{\mathcal{Q}_{\theta r{/2}}{((1-\theta^2)t+\theta^2 t_0,(1-\theta)x+\theta x_0)}} \big(  |u_t| +  |D^2u|  \big) d\tau dh\right) d\theta.
\]
{By the convexity of $\cQ_1^+$, it is easily seen that $\mathcal{Q}_{\theta r/2}((1-\theta^2)t+\theta^2 t_0,(1-\theta)x+\theta x_0)\subset \cQ_1^+$}.
It then follows from H\"{o}lder's inequality that
\begin{align*}
|I_1| \leq Nr^{1-\frac{d+\gamma_+}{p} -\frac{2}{q}} \Big[ \|D^2u\|_{L_{q,p}({\cQ_1^+}, x_d^{\gamma} dz)} + \|u_t\|_{L_{q,p}(\cQ_1^+, x_d^{\gamma} dz)} \Big],
\end{align*}
where $N = N(d, p, q, \gamma)>0$, and we also used the fact that
\[
\left(\int_{B_{\theta r/2}({(1-\theta)x+\theta x_0})} |h_d|^{-\gamma/(p-1)}\,  dh \right)^{1-\frac{1}{p}} \leq N (\theta r)^{d-\frac{d+\gamma_+}{p}},
\]
for all $x \in \overline{B}_{1/2}$ and for all $\theta, r \in (0,1)$. Similarly, we also have
\begin{align*}
|I_2| & \leq Nr^{1-\frac{d+\gamma_+}{p}-\frac{2}{q}} \Big[ \|D^2u\|_{L_{q,p}({\cQ_1^+}, x_d^{\gamma} dz)} + \|u_t\|_{L_{q,p}({\cQ_1^+}, x_d^{\gamma} dz)} \Big].
\end{align*}
From the last two estimates, we infer from \eqref{hold-1-proof} that
\begin{align} \notag
|v(t,x) - v(s,y)| & \leq |I_1| + |I_2| \\ \label{hold-2-proof}
& \leq N r^{1-\frac{d+\gamma_+}{p}-\frac{2}{q}}  \Big[ \|D^2u\|_{L_{q,p}(\mathcal{Q}_{1}, x_d^{\gamma} dz)} + \|u_t\|_{L_{q,p}(\mathcal{Q}_{1}, x_d^{\gamma} dz)} \Big],
\end{align}
which gives \eqref{eq9.25} as $v= D_iu$ with $i \in \{1,2,\ldots, d\}$.

\smallskip
To prove \eqref{eq9.24}, we note from \eqref{hold-1-proof} that
\[
|v(t,x)| \leq  |v(s,y)|+ N \Big[ \|D^2u\|_{L_{q,p}(\mathcal{Q}_{1}, x_d^{\gamma} dz)} + \|u_t\|_{L_{q,p}(\mathcal{Q}_{1}, x_d^{\gamma} dz)} \Big],
\]
for every $(t,x), (s,y) \in {\overline{\mathcal{Q}^+_{1/2}}}$. Then, integrating this with respect to the $(s,y)$ variable on $\mathcal{D}$, we obtain \eqref{eq9.24}.

{Finally, by the triangle inequality,
\begin{align}
&|u(t,x) - u(s,x)|\notag\\
\le \ & |u(t,x) - u(t,y)+Du(t,x)\cdot (y-x)|+|u(s,x) - u(s,y)+Du(s,x)\cdot (y-x)|\notag\\
& +|u(t,y)-u(s,y)|+|Du(t,x)-Du(s,x)||y-x|=:J_1+J_2+J_3+J_4,    \label{eq11.14}
\end{align}
where $y\in B_{(t-s)^{1/2}}^+(x)$. It follows from \eqref{eq9.25} that
$J_1+J_2+J_4$ is bounded by the right-hand side of \eqref{eq9.25b}. Moreover, by the fundamental theorem of calculus and H\"older's inequality,
\begin{align*}
&\fint_{B_{(t-s)^{1/2}}^+(x)}|u(t,y)-u(s,y)|\,dy
\leq \int_s^t \fint_{B_{(t-s)^{1/2}}^+(x)} |u_t(\tau, y)|\,dyd\tau\\
& \le N(t-s)^{1-\frac 1 q-\frac{d+\gamma_+}{2p}} \|u_t\|_{L_{q,p}(\cQ_1^+, x_d^{\gamma} dz)}.
\end{align*}
Taking the average of \eqref{eq11.14} with respect to $y\in B_{(t-s)^{1/2}}^+(x)$ and using the above inequalities, we reach \eqref{eq9.25b}. The lemma is proved.}
\end{proof}
%%%%%%%%%%%%%%%%%%%%%%%%%%%%%%%%%%%%%%%%%%%%%%%%%%%%%%%%%%%%%%%%%%%%%%%

\section{Equations with coefficients depending only on the \texorpdfstring{$x_d$}{} variable} \label{sec:3}
In this section, we consider \eqref{eq:main} when the coefficients in \eqref{eq:main} only depend on the $x_d$ variable.
Let  us denote
\begin{equation} \label{L0-def}
\sL_0 u = \bar{a}_0(x_d) u_t+\lambda \bar{c}_0(x_d) u-\mu(x_d) \bar{a}_{ij}(x_d)D_iD_j u.
\end{equation}
where $\mu, \bar{a}_0, \bar{c}_0, \bar{a}_{ij}: \bR_+ \rightarrow \bR$ are given measurable functions and they satisfy \eqref{con:mu}-\eqref{con:ellipticity}. We consider
\begin{equation}\label{eq:xd}
\left\{
\begin{array}{cccl}
\sL_0 u & = & \mu(x_d) f \quad &\text{ in } \Omega_T,\\
u & = & 0 \quad &\text{ on } (-\infty, T) \times \partial \bR^d_+.
\end{array} \right.
\end{equation}
\smallskip
The main result of this section is the following theorem, which  is a special case of Corollary \ref{cor1}.
\begin{theorem}\label{thm:xd}
Assume that $\bar{a}_0, \bar{c}_0, (\bar{a}_{ij})$ satisfy \eqref{con:mu}--\eqref{con:ellipticity} and assume further that $f \in {L_p(\Omega_T,x_d^\gamma\, dz)}$ for some given $p>1$ and
\[
\gamma \in \big(p(\alpha-1)_+-1,2p-1\big).
\]
Then, \eqref{eq:xd} admits a unique strong solution $u \in \sW^{1,2}_p(\Omega_T, x_d^\gamma\, dz)$.
Moreover,
\begin{align} \notag
& \|\chi^{-\alpha}u_t\|_{L_p(\Omega_T,x_d^{\gamma}\, dz)} + \|D^2 u\|_{L_p(\Omega_T,x_d^{\gamma}\, dz)}  \\ \label{eq:xd-main}
& \quad \qquad +\lambda\|\chi^{-\alpha}u\|_{L_p(\Omega_T,x_d^{\gamma}\, dz)} \le N\|f\|_{L_p(\Omega_T,x_d^{\gamma}\, dz)};
\end{align}
and if $\gamma\in (\alpha p/2-1,2p-1)$, we also have
\begin{equation}
                    \label{eq3.09}
\lambda^{1/2}\|\chi^{-\alpha/2}Du\|_{L_p(\Omega_T,x_d^{\gamma}\, dz)}
\le N\|f\|_{L_p(\Omega_T,x_d^{\gamma}\, dz)},
\end{equation}
where $N=N(d,\nu,\alpha, \gamma, p)>0$.
\end{theorem}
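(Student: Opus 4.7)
The plan is to build on Lemma \ref{l-p-sol-lem}, which (via Krylov's device from \cite{Kr99} applied to the weighted $L_p$ theory for divergence-form equations from \cite{DPT21}) provides the core weighted $L_p$ estimate. From that baseline the strategy is to: (i) use the $x_d$-independence of the coefficients together with scaling to extend the estimate to the full admissible range of weights; (ii) solve for $D_d^2u$ algebraically from the PDE; (iii) deduce the gradient estimate \eqref{eq3.09} by interpolation; and (iv) obtain existence by the method of continuity.

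\textbf{Step 1 (A priori estimate).} Since $\bar a_{ij},\bar a_0,\bar c_0$ depend only on $x_d$, the tangential derivatives $v=D_{x'_k}u$ ($k<d$) and $v=u_t$ satisfy equations of the same form as \eqref{eq:xd} but with $f$ replaced by $D_{x'_k}f$ or $\partial_t f$. Because $\partial_i\bar a_{ij}(x_d)=0$ for $i<d$, one has (after mollifying the coefficients if necessary)
\[
\mu(x_d)\bar a_{ij}(x_d)D_iD_jv=D_i\bigl(\mu\bar a_{ij}D_jv\bigr)-\partial_d(\mu\bar a_{dj})D_jv,
\]
so the equation for $v$ can be cast in divergence form with a first-order perturbation. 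The weighted $L_p$ result for divergence-form equations in \cite{DPT21} (as encoded in Lemma \ref{l-p-sol-lem}) then controls $DD_{x'}u$ and $\partial_tu$, while the missing derivative $D_d^2u$ is recovered by algebraically solving \eqref{eq:xd} for $\bar a_{dd}D_d^2u$ in terms of $u_t$, $u$, $D_{x'}Du$, and $f$. A scaling in the spirit of \eqref{scaling} converts the unit-$\lambda$ version of the estimate into \eqref{eq:xd-main} for arbitrary $\lambda>0$.

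\textbf{Step 2 (Range of weights and gradient estimate).} The range $\gamma\in(p(\alpha-1)_+-1,\,2p-1)$ is dictated by the admissible weights in the divergence-form theory of \cite{DPT21}, and the sharpness at both endpoints matches the examples in Remarks \ref{remark-1-range}-\ref{remark-2-range}. The estimate \eqref{eq3.09} on the narrower range $\gamma\in(\alpha p/2-1,2p-1)$ follows either by reading off the gradient bound directly from the divergence-form theorem in its natural range, or by weighted interpolation between the bounds for $\chi^{-\alpha}u$ and $D^2u$ already established in Step 1.

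\textbf{Step 3 (Existence and uniqueness).} Uniqueness is immediate from \eqref{eq:xd-main} applied to the difference of two solutions with $f=0$. For existence I would use the method of continuity to connect $\sL_0$ to a model operator such as $\partial_t+\lambda-x_d^\alpha\Delta$, for which solvability follows from the partial Fourier transform in $(t,x')$, reducing to a parameter-dependent ODE in $x_d$ that can be handled explicitly; the a priori estimate of Step 1 then permits continuation along the family of operators.

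\textbf{Main obstacle.} The central difficulty is matching the sharp lower bound $p(\alpha-1)_+-1$ across the two regimes $\alpha\in(0,1]$ and $\alpha\in(1,2)$: the Krylov-type reduction produces a perturbation $\partial_d(\mu\bar a_{dj})D_jv$ whose contribution is controllable only after a careful mollification of the coefficients, and the weight $\gamma_1$ used as the underlying measure $\mu_1$ in Assumption \ref{assumption:osc} must be chosen in concert with $\gamma$ (as indicated by the choice $\gamma_1=1-(\alpha-1)_+$ in Corollary \ref{cor1}). A secondary technical point is justifying the formal tangential differentiation and the interpolation step rigorously inside the weighted $\sW^{1,2}_p(\Omega_T,x_d^\gamma\,dz)$ framework.
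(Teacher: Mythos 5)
Your Step~1 is where the argument breaks down, and the gap is substantial: the proposal has no mechanism to reach the range $\gamma\le p-1$, which is precisely the hard part of the theorem. Lemma~\ref{l-p-sol-lem} (the Krylov-type reduction from \cite{DPT21}) only gives $\gamma\in(p-1,2p-1)$, and you propose to extend this by differentiating the equation tangentially and applying the divergence-form theory to $v=D_{x'_k}u$ or $v=u_t$. That fails for two reasons. First, once you differentiate, the right-hand side becomes $\mu(x_d)D_{x'_k}f$ (resp.\ $\mu(x_d)\partial_t f$), so you would need $D_{x'}f,\partial_t f$ in weighted $L_p$, which is not assumed. Second, the identity $\mu\bar a_{ij}D_iD_jv=D_i(\mu\bar a_{ij}D_jv)-\partial_d(\mu\bar a_{dj})D_jv$ requires $\bar a_{ij}$ to be differentiable in $x_d$, whereas the hypotheses only give measurability; mollifying does not repair this, because a priori estimates for a mollified operator do not transfer back to the original one. (The paper circumvents the differentiability issue by passing to the non-symmetric divergence-form matrix $\tilde a_{ij}$ in \eqref{eq:change}, which absorbs all $d$-direction derivatives without ever differentiating coefficients; but even that device leaves you within the $A_p$ window $\gamma'\in(-1,p-1)$ of the divergence-form theory, i.e.\ $\gamma\in(p-1,2p-1)$.) Also, the scaling $(t,x)\mapsto(s^{2-\alpha}t,sx)$ you invoke is weight-power preserving, so it cannot enlarge the admissible range of $\gamma$.

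What the paper actually does after Lemma~\ref{l-p-sol-lem} is develop an entirely separate sharp-function machinery: Caccioppoli and boundary/interior H\"older estimates for the \emph{homogeneous} equation (Lemmas \ref{caccio}--\ref{prop:int}), the mean oscillation estimates of Lemmas \ref{oscil-lemma-1}--\ref{oscil-lemma-2} with the carefully chosen underlying measure $\mu_1(dz)=x_d^{\gamma_1}\,dxdt$, and then the weighted Fefferman--Stein and Hardy--Littlewood theorems (Theorem~\ref{FS-thm}) with $\beta_0=\min\{1,\alpha\}$ to land in the optimal range $\gamma>p(\alpha-1)_+-1$. The homogeneous-equation estimates avoid the $D_{x'}f$ problem because for $\sL_0 w=0$ one can differentiate freely and appeal to Caccioppoli. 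None of that framework appears in your plan. Your Step~2 is similarly incomplete: the gradient estimate \eqref{eq3.09} on the narrower range $\gamma>\alpha p/2-1$ does not follow by interpolating between $\chi^{-\alpha}u$ and $D^2u$ (interpolation would not explain why the gradient range can be strictly smaller when $\alpha<1$); in the paper it is obtained from a distinct oscillation estimate run with $\beta_0=\alpha/2$ so that $\bar\mu_1=\mu_1$. Finally, on Step~3, existence in the paper is proved not by method of continuity with a Fourier-reduced ODE but by approximating $f$ by compactly supported $f_k$, producing $u_k$ from the divergence-form theory of \cite{DPT21}, and then verifying $u_k\in\sW^{1,2}_p$ via the homogeneous estimates near $\{x_d=0\}$ and Hardy inequalities before passing to the limit via the a priori bound. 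Your route would require a genuine solvability theorem for the model equation $u_t+\lambda u-x_d^\alpha\Delta u=x_d^\alpha f$ in $\sW^{1,2}_p(\Omega_T,x_d^\gamma\,dz)$ for the full range of $\gamma$, which is essentially the theorem you are trying to prove.
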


The proof of Theorem \ref{thm:xd} requires various preliminary results and estimates.
Our starting point is Lemma \ref{l-p-sol-lem} below which gives Theorem \ref{thm:xd} when $\gamma$ is large. See Subsection \ref{subsec:L2} below.
Then, in Subsections \ref{subsec:boundary} and \ref{subsec:int}, we derive pointwise estimates for solutions to the corresponding homogeneous equations.
Afterwards, we derive the oscillation estimates for solutions  in Subsection \ref{subsec:osc-est}.
The proof of Theorem \ref{thm:xd} will be given in the last subsection, Subsection \ref{proof-xd}.

Before starting, let us point out several observations as well as recall several needed definitions. Note that by  dividing the PDE in \eqref{eq:xd} by $\bar{a}_0$ and then absorbing $\bar{a}_{dd}$ into $\mu(x_d)$, without loss of generality, we may assume that
\begin{equation} \label{add-cond}
\bar{a}_{dd}=1  \qquad \text{ and } \qquad \bar{a}_0 =1.
\end{equation}
Observe that \eqref{eq:xd} can be rewritten into a divergence form equation
\begin{equation}
                    \label{eq:xd-div}
\bar{a}_0 u_t+\lambda \bar{c}_0(x_d)u-\mu(x_d) D_i(\tilde a_{ij}(x_d) D_{j} u)=\mu(x_d)f \quad \text{ in } \Omega_T,
\end{equation}
where
\begin{equation}
                \label{eq:change}
\tilde a_{ij}=\left\{
                \begin{array}{ll}
                  \bar{a}_{ij}+ \bar{a}_{ji} & \hbox{for $i\neq d$ and $j=d$;} \\
                  0 & \hbox{for $i=d$ and $j\neq d$;} \\
                  \bar{a}_{ij} & \hbox{otherwise.}
                \end{array}
              \right.
\end{equation}
We note that even though $(\tilde a_{ij})$ is not symmetric, it still satisfies the ellipticity condition \eqref{con:ellipticity} and also $\tilde a_{dd} =1$ when \eqref{add-cond} holds.

\smallskip
Due to the divergence form as in \eqref{eq:xd-div}, we need the definition of its weak solutions.
In fact, sometimes in this section, we consider the following class of equations in divergence form which are slightly more general than \eqref{eq:xd-div}
\begin{equation} \label{eq:dx-loc}
u_t + \lambda \bar{c}_0(x_d) u - \mu(x_d)D_i (\tilde{a}_{ij}(x_d)D_j u - F_i) = \mu(x_d) f  \quad \text{in} \quad (S, T) \times \mathcal{D}
\end{equation}
with the boundary condition
\begin{equation*}% \label{eq:dx-bc}
u  =  0  \quad \text{on} \quad  (S, T) \times (\overline{\mathcal{D}} \cap \{x_d =0\})
\end{equation*}
for some open set $\mathcal{D} \subset \mathbb{R}^d_+$ and $-\infty \leq S < T \leq \infty$.

\begin{definition}
For a given weight $\omega$ defined on $(S, T) \times \mathcal{D} $ and for given $F= (F_1, F_2, \ldots, F_2) \in L_{p, \text{loc}}((S, T) \times \mathcal{D} )^d$ and $f \in L_{p, \text{loc}}((S, T) \times \mathcal{D} )$, we say that a function $u \in \cH_p^1((S, T) \times \mathcal{D} , \omega)$ is a weak solution of \eqref{eq:dx-loc} if
\begin{equation} \label{def-local-weak-sol}
\begin{split}
& \int_{(S, T) \times \mathcal{D} }\mu(x_d)^{-1}(-u \partial_t \varphi + \lambda \overline{c}_0 u \varphi)dz + \int_{(S, T) \times \mathcal{D} } (\tilde{a}_{ij} D_ju  - F_i) D_i \varphi  dz \\
& = \int_{(S, T) \times \mathcal{D} }  f(z) \varphi(z) dz, \quad \forall \ \varphi \in C_0^\infty((S, T) \times \mathcal{D} ).
\end{split}
\end{equation}
\end{definition}

\subsection{\texorpdfstring{$L_p$}{} strong solutions when the powers of weights are large} \label{subsec:L2}
The following lemma is the main result of this subsection, which gives Theorem  \ref{thm:xd} when $\gamma \in (p-1, 2p-1)$.
\begin{lemma} \label{l-p-sol-lem} Let $\nu \in (0,1)$, $\lambda>0$, $\alpha \in (0,2)$, $p \in (1, \infty)$, and $\gamma \in (p-1, 2p-1)$. Assume that $\bar{a}_0, \bar{c}_0, (\bar{a}_{ij})$, and $\mu$ satisfy the ellipticity and boundedness conditions \eqref{con:mu}--\eqref{con:ellipticity}. Then, for any $f \in L_p(\Omega_T, x_d^\gamma\, dz)$, there exists a unique strong solution $u \in \sW^{1,2}_p(\Omega_T, x_d^\gamma\, dz)$ to \eqref{eq:xd}. Moreover, for every solution $u \in \sW^{1,2}_p(\Omega_T, x_d^\gamma\, dz)$ of \eqref{eq:xd} with $f \in L_p(\Omega_T, x_d^\gamma\, dz)$, it holds that
\begin{align} \notag
& \lambda \|\chi^{-\alpha} u\|_{L_p(\Omega_T, x_d^{\gamma}dz)} + \sqrt{\lambda} \|\chi^{-\alpha/2}Du \|_{L_p(\Omega_T, x_d^{\gamma}dz)} \\ \label{est-0405-1}
& \qquad + \| D^2u\|_{L_p(\Omega_T, x_d^{\gamma}dz)}  + \|\chi^{-\alpha} u_t\|_{L_p(\Omega_T, x_d^{\gamma}dz)} \leq N  \|f\|_{L_p(\Omega_T, x_d^{\gamma}dz)},
\end{align}
where $N = N(d, \alpha, \nu, \gamma, p)>0$.
\end{lemma}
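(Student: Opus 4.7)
The plan is to reduce the nondivergence form equation to its divergence form counterpart \eqref{eq:xd-div}, invoke the weighted $L_p$-solvability from \cite{DPT21}, and then use Krylov's idea from \cite{Kr99} of differentiating in the tangential variables to upgrade to nondivergence form estimates. By \eqref{add-cond} we may assume $\bar a_0 \equiv \bar a_{dd} \equiv 1$, in which case \eqref{eq:xd} coincides exactly with \eqref{eq:xd-div}, where $(\tilde a_{ij})$ from \eqref{eq:change} still satisfies the uniform ellipticity \eqref{con:ellipticity}.

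I would first apply the divergence form $L_p$-solvability from \cite{DPT21} (after checking that $\gamma \in (p-1,2p-1)$ lies inside the admissible weight range there, which uses $\gamma > p-1 > \alpha p/2-1$ from $\alpha<2$) to obtain a weak solution $u\in \cH_p^1(\Omega_T, x_d^\gamma dz)$ with bounds on $\|\chi^{-\alpha/2}u\|_{L_p}$ and $\|Du\|_{L_p}$ by $\|f\|_{L_p}$, interpreting $\mu f$ as an element of $\bH_p^{-1}$. Next, for each $k\in\{1,\ldots,d-1\}$, the tangential derivative $v=D_k u$ solves the same divergence form equation with right-hand side $\mu D_k f = D_k(\mu f)$, which fits the $\bH_p^{-1}$ template by setting $F_k = f$ and $F_i=0$ for $i\ne k$. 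Reapplying the divergence form estimate of \cite{DPT21} to $v$ then yields
\[
\|D_{x'} Du\|_{L_p(\Omega_T,x_d^\gamma dz)} + \|\chi^{-\alpha/2} D_{x'} u\|_{L_p(\Omega_T,x_d^\gamma dz)} \le N\|f\|_{L_p(\Omega_T,x_d^\gamma dz)},
\]
which controls every entry of $D^2 u$ except $D_d^2 u$.

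To recover $\|D_d^2 u\|_{L_p}$ and $\|\chi^{-\alpha} u_t\|_{L_p}$, I would use $\bar a_{dd}=1$ to rewrite the equation as
\[
u_t + \lambda \bar c_0 u - \mu D_d^2 u = \mu f + \mu \sum_{(i,j)\ne (d,d)} \bar a_{ij} D_i D_j u =: \mu G,
\]
whose right-hand side is already controlled in $L_p(\Omega_T,x_d^\gamma dz)$ by the previous step. With $x'$ treated as a parameter this is a one-parameter family of one-dimensional problems on $\bR_+$ of the model form $u_t + \lambda u - x_d^\alpha u_{x_d x_d} = x_d^\alpha G$, whose weighted $L_p$-theory (accessible by the self-similar change $y=\tfrac{2}{2-\alpha}x_d^{1-\alpha/2}$, which reduces the spatial operator to constant coefficients plus a Bessel lower-order term, followed by Mellin analysis on $\bR_+$) delivers both $\|D_d^2 u\|_{L_p}$ and $\|\chi^{-\alpha}u_t\|_{L_p}$ in terms of $\|G\|_{L_p}$. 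The bound on $\|\chi^{-\alpha}u\|_{L_p}$ follows from $\chi^{-\alpha/2}u\in L_p$ and a weighted Hardy inequality on $\bR_+$, and the sharp $\lambda$-dependence in \eqref{est-0405-1} is extracted by the time-scaling compatible with \eqref{scaling}. Existence and uniqueness in $\sW_p^{1,2}(\Omega_T,x_d^\gamma dz)$ follow from the resulting a priori estimate by the method of continuity from a model case (for example $\mu(x_d)=x_d^\alpha$, $\bar a_{ij}=\delta_{ij}$, $\bar c_0 =1$).

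The main obstacle I anticipate is precisely this last step: the divergence form theory of \cite{DPT21} only provides $u_t\in \bH_p^{-1}$, so dismantling the apparent circularity between $u_t$ and $D_d^2 u$ (both sitting on opposite sides of the reduced equation) requires a genuinely new ingredient, namely a sharp one-dimensional weighted $L_p$-estimate on $\bR_+$ valid exactly in the range $\gamma\in(p-1,2p-1)$; the lower bound is what makes the Hardy step from $\chi^{-\alpha/2}u$ to $\chi^{-\alpha}u$ admissible, and the upper bound is what keeps the boundary trace of $D_d u$ meaningful, so neither inequality is slack in the argument.
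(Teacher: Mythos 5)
Your reduction to the divergence form has a fundamental weight mismatch, and the ingredient you flag as missing (the one-dimensional weighted estimate for $D_d^2u$ and $u_t$) is genuinely absent from your argument, whereas the paper never needs it.

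First, the application of \cite{DPT21} cannot be made directly on $L_p(\Omega_T,x_d^\gamma\,dz)$ when $\gamma\in(p-1,2p-1)$: that theorem requires the weight on the solution side to be an $A_p$ weight, i.e.\ a power $x_d^{\gamma'}$ with $\gamma'\in(-1,p-1)$. The paper handles this by setting $\gamma'=\gamma-p\in(-1,p-1)$ so that $x_d^{\gamma'}\in A_p$, and absorbing the extra power through the data: $\mu(x_d)f$ is fed in via $f_1$ in the $\bH_p^{-1}$-norm, and $\|\chi^{1-\alpha}f_1\|_{L_p(x_d^{\gamma'})}=\|x_d f\|_{L_p(x_d^{\gamma'})}=\|f\|_{L_p(x_d^{\gamma})}$; the output \eqref{Du-0226} then controls $Du$ and $\chi^{-\alpha/2}u$ in $L_p(x_d^{\gamma'})$, a stronger weight than $L_p(x_d^{\gamma})$ near $x_d=0$. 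Your purported ``admissibility check'' $\gamma>p-1>\alpha p/2-1$ is not the correct $A_p$ constraint and lands exactly on the wrong side of it.

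This mismatch propagates to the tangential-derivative step. When you differentiate and insert $F_k=f$ into the $\bH_p^{-1}$ template, the term $F$ must be measured in the same $L_p(x_d^{\gamma'})$ norm as $Du$, but you only know $f\in L_p(x_d^{\gamma})$; since $\gamma'=\gamma-p<\gamma$, the latter does not control the former near the boundary. So the bound $\|DD_{x'}u\|_{L_p(x_d^\gamma)}\le N\|f\|_{L_p(x_d^\gamma)}$ does not come out of \cite{DPT21} by differentiating once.

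Finally, the recovery of $D_d^2u$ and $u_t$ by ``self-similar change plus Mellin analysis'' is not a routine lemma: it is essentially a sharp weighted $W^{1,2}_p$ estimate in exactly the weight range $(p-1,2p-1)$, which is a significant part of what Lemma~\ref{l-p-sol-lem} itself asserts. You correctly sense the circularity but do not resolve it. The paper resolves it in a completely different way: after the $\cH^1_p$ and Hardy estimates, it uses the cutoff-scaling device of Krylov \cite{Kr99}. For each $r>0$ one works with $w=\zeta_r(x_d)u(r^\alpha t,x)$, supported in $\{1/r<x_d<2/r\}$, on which $r^\alpha\mu(x_d)\sim 1$; there the classical (non-degenerate) $W^{1,2}_p$ theory controls $w_t$, $Dw$, $D^2w$ simultaneously, and the identities \eqref{weight-kry} reassemble the $x_d^\gamma$-weighted norms after the $r$-integration. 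No 1D analysis and no circularity between $u_t$ and $D_d^2u$ arise, because both are estimated at once on each strip.

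In short: the weight at which \cite{DPT21} actually applies is $x_d^{\gamma-p}$, not $x_d^\gamma$; the derivative trick then fails to close in the right space; and the Mellin-style closure you invoke is unproven and is precisely what the cutoff-scaling argument replaces.
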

\begin{proof}  The key idea is to apply \cite[Theorem 2.4]{DPT21} to the divergence form equation \eqref{eq:xd-div}, and then use an idea introduced by Krylov in \cite[Lemma 2.2]{Kr99} with a suitable scaling.
To this end, we assume that \eqref{add-cond} holds, and let us denote $\gamma' = \gamma -p \in (-1, p-1)$ and we observe that
\[
x_d^{1-\alpha}\mu(x_d) |f(z)| \sim x_d |f(z)| \in L_p(\Omega_T, x_d^{\gamma'}dz).
\]
As $\gamma' \in (-1, p-1)$, we have $x_d^{\gamma'} \in A_p$.
Moreover, the equation \eqref{eq:xd} can be written in divergence form as \eqref{eq:xd-div}.
Therefore, we apply \cite[Theorem 2.4]{DPT21} to \eqref{eq:xd-div} with $f_1=\mu(x_d) f$ and $f_2= 0$ to yield the existence of   a unique weak solution $u \in \mathscr{H}^1_p(\Omega_T, x_d^{\gamma'}dz)$ of \eqref{eq:xd-div} satisfying
\begin{align} \notag
& \|Du\|_{L_{p}(\Omega_T,x_d^{\gamma'} dz)} + \sqrt{\lambda} \|\chi^{-\alpha/2}u\|_{L_{p}(\Omega_T,x_d^{\gamma'}dz)} \\ \label{Du-0226}
& \le N\|{x_d^{1-\alpha}f_1}\|_{L_{p}(\Omega_T,x_d^{\gamma'}dz)} = N \|f\|_{L_p(\Omega_T, x_d^\gamma dz)},
\end{align}
with $N = N(d, \nu, \alpha, \gamma, p)>0$. We note here that because the coefficients $\bar{c}_0, \bar{a}_{ij}$ only depend on $x_d$, \cite[Theorem 2.4]{DPT21} holds for any $\lambda>0$ by a scaling argument.  From \eqref{Du-0226}, the zero boundary condition,  and the weighted Hardy inequality (see \cite[Lemma 3.1]{DP-AMS} for example), we infer that
\begin{align} \notag
\|u\|_{L_p(\Omega_T, x_d^{\gamma -2p}dz)} & = \|\chi^{-1}u\|_{L_p(\Omega_T, x_d^{\gamma'}dz)} \leq N \|Du\|_{L_{p}(\Omega_T,x_d^{\gamma'})} \\ \label{u-op-wei-0226}
& \leq N \|f\|_{L_p(\Omega_T, x_d^\gamma)}.
\end{align}

\smallskip
It remains to prove that \eqref{est-0405-1} holds as it also implies that $u \in \sW^{1,2}_p(\Omega_T, x_d^\gamma)$.
We apply the idea introduced by Krylov in \cite[Lemma 2.2]{Kr99} and combine it with a scaling argument to remove the degeneracy of the coefficients. See also \cite[Theorem 3.5]{DK15} and \cite[Lemma 4.6]{DP-JFA}.
To this end, let us fix a standard non-negative cut-off function $\zeta \in C_0^\infty((1,2))$. For each $r >0$, let $\zeta_r(s) =\zeta(rs)$ for $s \in \bR_+$. Note that with a suitable assumption on the integrability of a given function $v: \Omega_T \rightarrow \bR$ and for $\beta \in \bR$, by using the substitution  $r^{\alpha}t \mapsto s$ for the integration with respect to the time variable, and then using the Fubini theorem, we have
\begin{equation} \label{weight-kry}
\begin{split}
& \int_0^\infty \left(\int_{\Omega_{r^{-\alpha}T}}|\zeta_r(x_d) v_r(z)|^p\, dz\right) r^{-\beta-1}\, dr =N_1\int_{\Omega_T} |v(z)|^p x_d^{\beta+\alpha}\, dz, \\
& \int_0^\infty \left(\int_{\Omega_{r^{-\alpha} T}}|\zeta_r'(x_d) v_r(z)|^p\,  dz\right) r^{-\beta-1}\, dr =N_2\int_{\Omega_T} |v(z)|^p x_d^{\beta + \alpha -p}\, dz, \\
& \int_0^\infty \left(\int_{\Omega_{r^{-\alpha} T}}|\zeta_r''(x_d) v_r(z)|^p\,  dz\right) r^{-\beta-1}\, dr =N_3\int_{\Omega_T} |v(z)|^p x_d^{\beta + \alpha-2p}\, dz,
\end{split}
\end{equation}
where $v_r(z) = v(r^{\alpha}t, x)$ for $z = (t, x) \in \Omega_{r^{-\alpha}T}$,
 \[
N_1 = \int_0^\infty |\zeta (s)|^p s^{-\beta-\alpha-1} ds, \quad
N_2 = \int_0^\infty |\zeta'(s)|^p s^{p-\beta-\alpha-1} ds,
\]
and
\[
N_3 = \int_0^\infty |\zeta''(s)|^p s^{2p-\beta-\alpha  -1} ds.
\]
Next, for $r>0$, we denote $u_r(z) = u(r^{\alpha}t,x)$,
\[
\hat{a}_{ij} (x_d)= r^{\alpha}\mu(x_d) \bar{a}_{ij}(x_d), \quad \bar{\lambda} = \lambda r^{\alpha}, \quad  \text{and} \quad f_r (z) = r^{\alpha} \mu(x_d) f(r^{\alpha}t, x).
\]
Note that $u_r$ solves the equation
\[
\partial_t u_r + \bar{\lambda} \bar{c}_0 u_r - \hat{a}_{ij}(x_d) D_i D_j u_r =  f_r \quad \text{in} \quad \Omega_{r^{-\alpha}T}.
\]
Let $w(z) =\zeta_r(x_d) u_r(z)$, which satisfies
\begin{equation} \label{w-eqn-0405-1}
w_t + \bar{\lambda} \bar{c}_0(x_d) w -  \hat{a}_{ij}(x_d) D_i D_j w = \hat{g} \quad \text{in} \quad \Omega_{r^{-\alpha}T}
\end{equation}
with the boundary condition $w(z', 0) =0$ for $z' \in (-\infty, r^{-\alpha}T)\times \bR^{d-1}$, where \[
\begin{split}
\hat{g}(z) & = \zeta_r  f_r(z) - \hat{a}_{dd} \zeta''_r  u_r  - \sum_{i \neq d} \big(\hat{a}_{id}  + \hat{a}_{di}\big)\zeta '_rD_i u_r.
\end{split}
\]
We note that $\text{supp}(w) \subset (-\infty, r^{-\alpha}T) \times \bR^{d-1} \times(1/r, 2/r)$, and on this set the coefficient matrix $(\hat{a}_{ij})$ is uniformly elliptic and bounded as $r^{\alpha}\mu (x_d) \sim 1$ due to \eqref{con:mu}.

We now prove \eqref{est-0405-1}  with the extra assumption that $u \in  \sW^{1,2}_p(\Omega_T, x_d^{\gamma'}dz)$.  Under this assumption and as $\zeta_r$ is compactly supported in $(0, \infty)$, we see that $w \in W^{1,2}_p(\Omega_{r^{\alpha}T})$, the usual parabolic Sobolev space. Then by applying the  $W^{1, 2}_{p}$-estimate for the uniformly elliptic and bounded coefficient equation \eqref{w-eqn-0405-1} (see, for instance, \cite{D12}),  we obtain
\[
  \bar{\lambda} \|w\| +  \bar{\lambda}^{1/2}\, \|Dw\|  +  \|D^2 w\|  + \|w_t\| \leq N \|\hat{g}\|,
\]
where $\|\cdot \| = \| \cdot \|_{L_p(\Omega_{r^{-\alpha}T})}$ and $N = N(d, \nu, p)>0$. From this, the definition of $\hat{g}$, and a simple manipulation, we obtain
\[
\begin{split}
&   \lambda r^{\alpha} \|\zeta_r u_r \| + \sqrt{\lambda}  r^{\alpha/2} \|\zeta_r Du_r\|   + \|\zeta_r D^2u_r\|  +\|\zeta_r \partial_t u_r \|  \\
& \leq N\Big[ \|\zeta_r f_r\| + \sqrt{\lambda} r^{\alpha/2}   \|\zeta_r' u_r\|  +  \|\zeta''_r u_r\|   + \|\zeta_r' Du_r\| \Big].
\end{split}
\]
Now, we raise this last estimate to the power $p$, multiply both sides by $r^{-(\gamma-\alpha)-1}$, integrate the result with respect to $r$ on $(0,\infty)$, and then apply \eqref{weight-kry} to obtain
\[
\begin{split}
& \lambda \|\chi^{-\alpha} u\|_{L_p(\Omega_T, x_d^{\gamma}\, dz)} +\sqrt{\lambda} \|\chi^{-\alpha/2}Du \|_{L_p(\Omega_T, x_d^{\gamma}\, dz)} \\
& \qquad + \| D^2u\|_{L_p(\Omega_T, x_d^{\gamma}\, dz)}  + \|\chi^{-\alpha} u_t\|_{L_p(\Omega_T, x_d^{\gamma}\, dz)}\\
 & \leq N \Big[ \|f\|_{L_p(\Omega_T, x_d^{\gamma}\, dz)} +\sqrt{\lambda} \|\chi^{-\alpha/2}u\|_{L_p(\Omega_T, x_d^{\gamma -p}\, dz)} + \|u\|_{L_p(\Omega_T, x_d^{\gamma -2p}\, dz)}  \\
 & \qquad   + \|Du\|_{L_p(\Omega_T, x_d^{\gamma -p}\, dz)}\Big].
 \end{split}
\]
From the last estimate, \eqref{Du-0226}, \eqref{u-op-wei-0226}, and the fact that $\gamma' = \gamma-p$, we infer that
\[
\begin{split}
& \lambda \|\chi^{-\alpha} u\|_{L_p(\Omega_T, x_d^{\gamma}\, dz)} + \sqrt{\lambda} \|\chi^{-\alpha/2}Du \|_{L_p(\Omega_T, x_d^{\gamma}\,dz)} \\
& \qquad + \| D^2u\|_{L_p(\Omega_T, x_d^{\gamma}\,dz)}  + \|\chi^{-\alpha} u_t\|_{L_p(\Omega_T, x_d^{\gamma}\,dz)} \leq N  \|f\|_{L_p(\Omega_T, x_d^{\gamma}\, dz)}.
\end{split}
\]
This proves \eqref{est-0405-1} under the additional assumption that $u \in \sW^{1,2}_p(\Omega_T, x_d^{\gamma'}\,dz)$.

\smallskip
It remains to remove the extra assumption that $u \in \sW^{1,2}_p(\Omega_T, x_d^{\gamma'}dz)$. By mollifying the equation \eqref{eq:xd} in $t$ and $x'$ and applying  \cite[Theorem 2.4]{DPT21} to the equations of $u^{(\varepsilon)}_t$ and $D_{x'}u^{(\varepsilon)}$, we obtain
$$
\chi^{-\alpha}u^{(\varepsilon)}, \chi^{-\alpha} u_t^{(\varepsilon)},  DD_{x'}u^{(\varepsilon)} \in L_p(\Omega_T, x_d^{\gamma'} dz).
$$
This and the PDE in \eqref{eq:xd} for $u^{(\varepsilon)}$ imply that
\[
D_{dd} u^{(\varepsilon)}  \in L_p(\Omega_T, x_d^{\gamma'} dz).
\]
Therefore $u^{(\varepsilon)} \in \sW^{1,2}_p(\Omega_T, x_d^{\gamma'}dz)$ is a strong solution of \eqref{eq:xd} with $f^{(\varepsilon)}$ in place of $f$. From this, we apply the a priori estimate \eqref{est-0405-1} that we just proved for $u^{(\varepsilon)}$ and pass to the limit as $\varepsilon \rightarrow 0^+$ to obtain the estimate \eqref{est-0405-1} for $u$. The proof of the lemma is completed.
\end{proof}
%===
%=============================================================================================
\subsection{Boundary H\"older estimates for homogeneous equations} \label{subsec:boundary}
Recall the operator $\sL_0$ defined in \eqref{L0-def}. In this subsection, we consider the homogeneous equation
\begin{equation}
            \label{eq:hom}
 \begin{cases}
\sL_0u=0 \quad &\text{ in } Q_1^+,\\
u=0 \quad &\text{ on } Q_1\cap \{x_d=0\}.
\end{cases}
\end{equation}
As the discussion that leads to \eqref{eq:xd-div}, without loss of generality we assume \eqref{add-cond} so that  \eqref{eq:hom} can be written in divergence form as
\begin{equation}
                    \label{eq:hom-div}
                    \left\{
                    \begin{array}{cccl}
u_t+\lambda \bar{c}_0(x_d)u-\mu(x_d) D_i(\tilde a_{ij}(x_d) D_{j} u) & = &0 &  \quad \text{ in } Q_1^+,\\
u & = & 0 & \quad \text{on} \quad Q_1\cap \{x_d=0\}.
\end{array} \right.
\end{equation}
A function $u \in \cH_{p}^1(Q_1^+)$ with $p \in (1,\infty)$ is said to be a weak solution of \eqref{eq:hom} if it is a weak solution of \eqref{eq:hom-div} in the sense defined in \eqref{def-local-weak-sol} and $u=0$ on $Q_1\cap \{x_d=0\}$ in the sense of trace.

\smallskip
For each $\beta \in (0,1)$, the $\beta$-H\"older semi-norm in  the spatial variable of a function $u$ on an open set $Q\subset \bR^{d+1}$ is given by
\[
\llbracket u\rrbracket_{C^{0, \beta}(Q)} = \sup\left\{  \frac{|u(t,x) - u(t,y)|}{|x-y|^{\beta}}:  x \not =y, \  (t,x), (t,y) \in Q \right\}.
\]
For $k, l \in \mathbb{N} \cup \{0\}$,  we denote
\[
\|u\|_{C^{k, l}(Q)}  = \sum_{i=0}^k \sum_{|j| \leq l}\|\partial_t^i D_{x}^j u\|_{L_\infty(Q)} .
\]
We also use the following H\"{o}lder norm of $u$ on $Q$
\[
\|u\|_{C^{k, \beta}(Q)} = \|u\|_{C^{k,0}(Q)} + \sum_{i=0}^{k} \llbracket \partial_t^i u\rrbracket_{C^{0, \beta}(Q)}.
\]
We begin with the following Caccioppoli type estimate.
\begin{lemma}  \label{caccio}
Suppose that $u\in \cH^1_2(Q_1^+)$ is a weak solution of \eqref{eq:hom}.
Then, for any integers $k,j\ge 0$ and $l=0,1$,
\begin{equation}
                \label{eq:hom-b1}
\int_{Q_{1/2}^+}|\partial_t^k D_{x'}^j D_{d}^l u|^2 \,dz
\le N \int_{Q_{1}^+} u^2 \,dz
\end{equation}
where $N = N(d,\nu,\alpha,k,j,l)>0$.
\end{lemma}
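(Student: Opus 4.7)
The plan is to exploit the fact that the coefficients of $\sL_0$ depend only on $x_d$, so the operator is invariant under translations in the tangential variables $(t, x')$ and commutes with $\partial_t$ and $D_{x'}$. Mollifying $u$ in the $(t, x')$ directions produces functions $u^{(\varepsilon)}$ that are smooth in those variables and remain weak solutions of \eqref{eq:hom-div} on a slightly smaller cylinder, so any tangential derivative $\partial_t^k D_{x'}^j u^{(\varepsilon)}$ is itself a weak solution of the same equation. The desired estimate for $u$ will then follow by passing to the limit $\varepsilon \to 0$ at the end.

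The key tool is a Caccioppoli-type energy estimate: for any weak solution $v \in \cH_2^1$ of \eqref{eq:hom-div} and any $1/2 \le r < R \le 1$,
\[
\int_{Q_r^+}\!\big(v^2 + |Dv|^2\big)\, dz \;\le\; \frac{N}{(R-r)^c}\int_{Q_R^+} v^2\, dz,
\]
where $N, c$ depend only on $d, \nu, \alpha$. This is obtained by testing \eqref{def-local-weak-sol} against $\varphi = \eta^2 v$, where $\eta(t,x) = \zeta(t)\phi(x)$ is a standard product cutoff equal to $1$ on $Q_r^+$ and supported in $Q_R^+$. Since $v$ vanishes on $\{x_d = 0\}$, the spatial part $\phi$ need not vanish there, making $\eta^2 v$ a valid element of $\sW^1_2$. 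After integrating by parts in $t$ (using that $\mu$ is $t$-independent), invoking the ellipticity of $(\tilde a_{ij})$, and applying Young's inequality, one arrives at a bound of the shape
\[
\int \eta^2 |Dv|^2\, dz \;\le\; N\int v^2 \big(|D\eta|^2 + \mu^{-1}\eta|\eta_t| + \lambda\mu^{-1}|\bar c_0|\eta^2\big)\, dz.
\]
The term involving $\mu^{-1} \sim x_d^{-\alpha}$ is the delicate point; since $\alpha < 2$ and $v$ vanishes at $\{x_d = 0\}$, a Hardy inequality yields $\int x_d^{-\alpha} v^2 \le C\int |D_d v|^2$, and a Giaquinta-type iteration over a family of nested cylinders absorbs this contribution back into the gradient term on the left.

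Iterating the above Caccioppoli along a telescoping chain $Q_{1/2}^+ \subset Q_{r_1}^+ \subset \cdots \subset Q_{r_{k+j}}^+ \subset Q_1^+$, applied successively to $u, \partial_t u, \partial_t D_{x'} u, \ldots, \partial_t^k D_{x'}^j u$, we bound each of these functions in $L^2$ by $\int_{Q_1^+} u^2$. The case $l = 0$ in \eqref{eq:hom-b1} is then immediate, while the case $l = 1$ is captured by the $|Dv|^2$ term produced by the last application of the Caccioppoli, since $D = (D_{x'}, D_d)$ includes the normal derivative. Finally, passing to the limit $\varepsilon \to 0$ in the mollification parameter gives \eqref{eq:hom-b1} for $u$ itself. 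The main obstacle is precisely the handling of the singular factor $\mu^{-1}$ in the Caccioppoli inequality: it is the combination of Hardy's inequality, the Dirichlet boundary condition at $\{x_d = 0\}$, and the iteration over nested cylinders that allows this term to be absorbed; without this structural input, the estimate would degenerate as $x_d \to 0$.
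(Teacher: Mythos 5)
The paper proves this lemma in one line: after normalizing so that $\bar a_{dd}=\bar a_0=1$, it rewrites \eqref{eq:hom} in divergence form \eqref{eq:hom-div} and invokes inequality (4.12) of \cite{DPT21}, where the degenerate Caccioppoli machinery was already developed. Your attempt is a self-contained re-derivation, and the overall scheme---mollify in $(t,x')$, use that the coefficients depend only on $x_d$ so that tangential derivatives of $u^{(\varepsilon)}$ inherit the equation, run an energy estimate along a nested chain, pass to the limit---is the right general outline. But the place where you try to close the basic energy estimate has a genuine gap.

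After testing \eqref{def-local-weak-sol} with $\varphi=\eta^2 v$, $\eta=\zeta(t)\phi(x)$, the diffusion terms close by Young, and the problematic remainder is $\int \chi^{-\alpha}\,\eta|\eta_t|\,v^2\,dz$. You propose to absorb it via Hardy, $\int_{B^+}x_d^{-\alpha}v^2\lesssim\int_{B^+}|D_dv|^2$, followed by ``a Giaquinta-type iteration over nested cylinders.'' The trouble is that the constant multiplying the $\int|D_dv|^2$ coming out of this step is $\|\eta_t\|_\infty$ times a fixed Hardy constant, and $\|\eta_t\|_\infty\sim(R^{2-\alpha}-r^{2-\alpha})^{-1}\sim(R-r)^{-1}$ on the scale $[1/2,1]$. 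What you get is at best $g(r)\le N(R-r)^{-1}g(R)+N(R-r)^{-2}\int_{Q_1^+}u^2$ with $g(\rho)=\int_{Q_\rho^+}|Dv|^2$. The coefficient of $g(R)$ is unbounded as $R-r\to 0$ and is certainly not a fixed $\theta<1$, so neither the standard Giaquinta hole-filling lemma nor a direct telescoping chain can absorb it---the iteration simply diverges. This is precisely the technical difficulty that makes the degenerate Caccioppoli non-routine and why the paper does not re-derive it but cites \cite{DPT21}. Your phrase ``absorbs this contribution back into the gradient term'' glosses over exactly the point that fails.

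Two further issues. First, the term $\lambda\mu^{-1}\bar c_0\eta^2v^2$ belongs on the left-hand side with a favorable sign (since $\bar c_0\ge\nu>0$, $\lambda\ge 0$, and after the time integration by parts it becomes $+\lambda\int\mu^{-1}\bar c_0\eta^2v^2$): placing it on the right as you wrote would require bounding $\lambda\int\chi^{-\alpha}v^2$, which cannot be done with a constant independent of $\lambda$, yet the constant $N$ in \eqref{eq:hom-b1} must be $\lambda$-independent. Second, even if the gradient Caccioppoli $\int|Dv|^2\le N\int v^2$ were available, it does not by itself give $\int_{Q_{r}^+}|\partial_t u|^2\le N\int_{Q_1^+}u^2$, which is the step needed to start your iteration in the $\partial_t$ direction ($\partial_t u$ is not a component of $Du$). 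That requires an extra ingredient---a parabolic energy identity also controlling $\sup_t\int\chi^{-\alpha}v^2(t,\cdot)\,dx$ and a time-difference-quotient argument, or an estimate on $D_{dd}u$ via the equation---and your sketch passes over it.
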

\begin{proof} Again, we can assume \eqref{add-cond} holds. The estimate \eqref{eq:hom-b1} follows from \cite[(4.12)]{DPT21} applied to \eqref{eq:hom-div}.
\end{proof}
\begin{lemma}
                        \label{lem:boundary}
Let $p_0 \in (1, \infty)$ and suppose that $u\in \cH^1_{p_0}(Q_1^+)$ is a weak solution of \eqref{eq:hom}.
Then,
\begin{equation}
                        \label{eq:hom-b2}
                        \begin{split}
& \|u\|_{C^{1,1}(Q_{1/2}^+)}+\|D_{x'}u\|_{C^{1,1}(Q_{1/2}^+)}+\|D_d u\|_{C^{1,\delta_0}(Q_{1/2}^+)} \\
& + \sqrt{\lambda}\|\chi^{-\alpha/2}u\|_{C^{1,1-\alpha/2}(Q_{1/2}^+)} \le N \|Du\|_{L_{p_0}(Q_1^+)},
\end{split}
\end{equation}
where $N=N(d,\nu,\alpha, p_0)>0$ and $\delta_0=\min\{2-\alpha,1\}$.
\end{lemma}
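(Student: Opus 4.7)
The overall strategy is to exploit the fact that $\sL_0$ has coefficients depending only on $x_d$, so any tangential or time derivative $v = \partial_t^k D_{x'}^j u$ still solves $\sL_0 v = 0$ with the same zero Dirichlet condition on $\{x_d=0\}$. Iterating Lemma~\ref{caccio} on a sequence of nested cylinders gives $L_2$ bounds on $\partial_t^k D_{x'}^j D_d^l u$ for all $k,j\geq 0$ and $l\in\{0,1\}$, controlled by $\|u\|_{L_2(Q_{7/8}^+)}$. The hypothesis $Du\in L_{p_0}(Q_1^+)$ is converted to an $L_2$ bound on $u$ via the weighted Hardy inequality (using the zero trace on $\{x_d=0\}$) together with a short $L_{p_0}\to L_2$ upgrade (a direct H\"older inequality if $p_0\geq 2$, or a Sobolev–Caccioppoli bootstrap if $p_0<2$). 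Sobolev embedding in the tangential $(t,x')$-variables, followed by a one-dimensional Sobolev embedding in $x_d$ (using that both $\partial_t^k D_{x'}^j u$ and $\partial_t^k D_{x'}^j D_d u$ lie in $L_2$), produces $L_\infty$ bounds on $\partial_t^k D_{x'}^j D_d^l u$ in $Q_{1/2}^+$, which already yield the full $C^{1,1}$ bounds on $u$ and $D_{x'}u$.

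For the $C^{1,\delta_0}$ bound on $D_d u$, only the spatial H\"older semi-norm requires additional work. I will use the equation (with $\bar a_{dd}=1$) to solve for the normal second derivative,
\[
D_{dd}u = \mu(x_d)^{-1}(u_t+\lambda \bar c_0 u) - \sum_{(i,j)\neq(d,d)} \bar a_{ij} D_i D_j u.
\]
Since $u$ and $u_t$ vanish on $\{x_d=0\}$ and their normal derivatives are bounded by the previous bootstrap, $|u|,|u_t|\leq N x_d$ near the boundary, so $|\mu^{-1}(u_t+\lambda u)|\leq N x_d^{1-\alpha}$. The tangential and mixed second-derivative terms on the right are bounded, hence $|D_{dd}u|\leq N(1+x_d^{1-\alpha})$ on $Q_{1/2}^+$. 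Integrating along $x_d$ and using $|y_d^{2-\alpha}-x_d^{2-\alpha}|\leq N|y_d-x_d|^{\min\{2-\alpha,1\}}$ (concavity of $s^{2-\alpha}$ when $\alpha\geq 1$; the mean value theorem with $x_d,y_d\leq 1/2$ when $\alpha<1$) yields the $\delta_0$-H\"older continuity of $D_d u$ in $x_d$. H\"older regularity in $x'$ follows from the $L_\infty$ bound on $D_{x'} D_d u$, and the same argument applied to $\partial_t u$ (another solution of $\sL_0=0$ with zero trace) gives the estimate on $\partial_t D_d u$.

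The weighted estimate $\sqrt{\lambda}\|\chi^{-\alpha/2}u\|_{C^{1,1-\alpha/2}}$ comes from a local energy identity. Multiplying the divergence form \eqref{eq:xd-div} of the equation by $u\eta^2/\mu$ with a standard cutoff $\eta$ supported in $Q_1^+$ and equal to $1$ on $Q_{3/4}^+$, and integrating by parts (the time integration by parts only picks up $\partial_t\eta$, because the coefficients are independent of $t$), produces
\[
\lambda\int_{Q_{3/4}^+}\chi^{-\alpha}u^2 + \int_{Q_{3/4}^+}|Du|^2 \leq N\int_{Q_1^+}\chi^{-\alpha}u^2.
\]
Since $\alpha<2$ implies $\chi^{-\alpha/2}\leq \chi^{-1}$ on $Q_1^+$ and $u$ has zero trace, the weighted Hardy inequality yields $\|\chi^{-\alpha/2}u\|_{L_2(Q_1^+)}\leq N\|Du\|_{L_2(Q_1^+)}\leq N\|Du\|_{L_{p_0}(Q_1^+)}$, hence $\sqrt{\lambda}\|\chi^{-\alpha/2}u\|_{L_2(Q_{3/4}^+)}\leq N\|Du\|_{L_{p_0}(Q_1^+)}$, and a fortiori $\sqrt{\lambda}\|u\|_{L_2(Q_{3/4}^+)}\leq N\|Du\|_{L_{p_0}(Q_1^+)}$. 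Running the Caccioppoli–Sobolev bootstrap of the first paragraph on the function $\sqrt{\lambda}\,u$ then gives $\sqrt{\lambda}\|\partial_t^k D_{x'}^j D_d^l u\|_{L_\infty(Q_{1/2}^+)}\leq N\|Du\|_{L_{p_0}(Q_1^+)}$ for $l\in\{0,1\}$ and any finite $k,j$. The pointwise $L_\infty$ bound on $\sqrt{\lambda}\,\chi^{-\alpha/2}u$ and the optimal $(1-\alpha/2)$-H\"older regularity in $x_d$ are extracted from the boundary expansion $u(t,x',x_d)=x_d D_d u(t,x',0)+O(x_d^{1+\delta_0})$ (justified by the $D_{dd}u$-bound above), which yields $\chi^{-\alpha/2}u = x_d^{1-\alpha/2}D_d u(t,x',0) + O(x_d^{\delta_0+1-\alpha/2})$. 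The exponent $1-\alpha/2$ is then produced by the concavity inequality $|y_d^{1-\alpha/2}-x_d^{1-\alpha/2}|\leq |y_d-x_d|^{1-\alpha/2}$ applied to the leading term, with a strictly smoother remainder; Lipschitz-in-$x'$ uses $D_{x'}u|_{x_d=0}=0$ and boundedness of $D_{x'}D_d u$. The component $\chi^{-\alpha/2}u_t$ is treated identically using $u_t|_{x_d=0}=0$. The main obstacle is carefully coordinating the $\sqrt{\lambda}$ factor through the energy estimate, the Caccioppoli–Sobolev bootstrap, and the boundary Taylor expansion while preserving the sharp H\"older exponent $1-\alpha/2$.
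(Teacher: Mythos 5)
Your overall strategy---Caccioppoli bootstrap on tangential\slash time derivatives, Sobolev embedding, and then using the equation to recover $D_{dd}u$---is essentially what lives inside the cited \cite[Lemma 5.5]{DPT21}, whereas the paper's proof of this lemma is a one-line reduction to that reference by rewriting $\sL_0 u=0$ in divergence form and observing that the flux $\tilde a_{dj}D_ju$ collapses to $D_du$ under \eqref{add-cond} and \eqref{eq:change}. So you are reconstructing the content of the outsourced lemma rather than following the paper's route.

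There is, however, a genuine gap in your second paragraph. You bound
\[
\big|\mu(x_d)^{-1}(u_t+\lambda\bar c_0 u)\big| \le N\big(\mu^{-1}|u_t| + \lambda\,\mu^{-1}|u|\big) \le N(1+\lambda)\,x_d^{1-\alpha},
\]
where the last step uses $|u|,|u_t|\le Nx_d$ from the $D_d$-derivative sup bounds, with $N$ independent of $\lambda$. Absorbing the factor $\lambda$ into ``$N$'' is not permitted: the lemma asserts $N=N(d,\nu,\alpha,p_0)$, uniform in $\lambda \ge 0$. Your bootstrap in the first paragraph, together with the energy estimate of your third paragraph, only improves this to $|u_t|\le N\lambda^{-1}x_d\|Du\|$ and $|u|\le N\lambda^{-1/2}x_d\|Du\|$ (for $\lambda\ge 1$), which still yields $|u_t|+\lambda|u|\le N\sqrt{\lambda}\,x_d\|Du\|$, so the H\"older semi-norm of $D_du$ inherits a factor $\sqrt{\lambda}$. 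The triangle-inequality bound on $u_t+\lambda\bar c_0 u$ destroys an essential cancellation: from the equation, $u_t+\lambda\bar c_0 u = \mu\,\bar a_{ij}D_iD_ju$, and in general the two summands are individually large while their sum is of size $O(x_d^\alpha)$ near the boundary (the explicit solution $u=e^{-\lambda t}x_d$ in dimension one has $D_{dd}u\equiv 0$ even though both $\mu^{-1}u_t$ and $\lambda\mu^{-1}u$ are of order $\lambda x_d^{1-\alpha}$). Attempting to exploit this identity directly is circular, since $\mu\,\bar a_{ij}D_iD_ju$ contains $D_{dd}u$. Getting a $\lambda$-uniform bound on $D_{dd}u$, and hence on $\llbracket D_du\rrbracket_{C^{0,\delta_0}}$ and on the boundary Taylor expansion you use in the third paragraph, requires a different mechanism, and you have not supplied one.

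A secondary point: you invoke Lemma~\ref{caccio}, which requires $u\in\cH^1_2(Q_1^+)$, but the hypothesis is $u\in\cH^1_{p_0}(Q_1^+)$ with $p_0\in(1,\infty)$. For $p_0<2$ you mention a ``Sobolev--Caccioppoli bootstrap'' without detail. The paper settles the analogous $p_0\in(1,2)$ case in Lemma~\ref{prop:boundary} precisely by invoking Lemma~\ref{lem:boundary} itself, so in a from-scratch approach to this lemma the $L_{p_0}\to L_2$ upgrade is not free and needs to be carried out explicitly (e.g.\ via a parabolic Sobolev improvement adapted to the degenerate weight), lest the argument become circular.
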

\begin{proof}  As explained, we can assume that \eqref{add-cond} holds. We apply  \cite[Lemma 5.5]{DPT21} to \eqref{eq:hom-div} by noting that $U:=\tilde a_{dj}D_j u=D_du$ in view of \eqref{add-cond} and \eqref{eq:change}.
\end{proof}

\begin{lemma}
                    \label{prop:boundary}
Let $p_0 \in (1, \infty)$, $\beta_0\in {(-\infty}, \min\{1,\alpha\}]$, and $\alpha_0 > -1$ be fixed constants.
There exists a number $\beta_1=\beta_1(\alpha,\beta_0) \in (0,1]$ such that for every weak solution $u\in \cH^1_{p_0}(Q_1^+)$ to \eqref{eq:hom}, it holds that
\begin{align}
        \label{eq:hom-b3}
\|\chi^{-\beta_0} u\|_{C^{1,\beta_1}(Q_{1/2}^+)}&\le N\|\chi^{-\beta_0} u\|_{L_{p_0}(Q_{3/4}^+,x_d^{\alpha_0}\,dz)},\\
        \label{eq:hom-b4}
\|\chi^{-\beta_0} u_t\|_{C^{1,\beta_1}(Q_{1/2}^+)}&\le N\|\chi^{-\beta_0} u_t\|_{L_{p_0}(Q_{3/4}^+,x_d^{\alpha_0}\,dz)},\\
        \label{eq:hom-b5}
\|\chi^{\alpha-\beta_0} DD_{x'}u\|_{C^{1,\beta_1}(Q_{1/2}^+)}&\le N\|\chi^{\alpha-\beta_0} DD_{x'}u\|_{L_{p_0}(Q_{3/4}^+,x_d^{\alpha_0}\,dz)},
\end{align}
and
\begin{equation}
        \label{eq:hom-b-Du}
\|\chi^{\beta_0}Du\|_{C^{1,\beta_1}(Q_{1/2}^+)} \le N\|\chi^{\beta_0}Du\|_{L_{p_0}(Q_{3/4}^+,x_d^{\alpha_0}dz)},
\end{equation}
where $N=N(d,\nu,\alpha,\alpha_0,\beta_0, p_0)>0$.
\end{lemma}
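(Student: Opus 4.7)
The plan is to unify the four estimates through the tangential invariance of the degenerate operator $\sL_0$ and then to absorb each weight by Taylor expansion at $\{x_d = 0\}$.

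First, because $\bar a_0, \bar c_0, \bar a_{ij}, \mu$ depend only on $x_d$, the derivative $v := \partial_t^k D_{x'}^j u$ again solves \eqref{eq:hom} with zero trace for every $k \ge 0$ and every tangential multi-index $j$. Iterating the Caccioppoli bound \eqref{eq:hom-b1} on such $v$'s gives, for every $l \in \{0, 1\}$ and every $r \in (1/2, 3/4)$, the $L_2$ estimate
\[
\int_{Q_r^+} |\partial_t^k D_{x'}^j D_d^l u|^2 \, dz \le N\, \|u\|_{L_2(Q_{3/4}^+)}^2.
\]
Applying Lemma \ref{lem:boundary} (with $p_0 = 2$) to $v = \partial_t^k D_{x'}^j u$ and invoking the Sobolev embedding in the tangential variables $(t, x')$ for sufficiently many $(k, j)$ upgrades this to the pointwise estimate
\[
\|\partial_t^k D_{x'}^j D_d^l u\|_{C^{0, \beta}(Q_{1/2}^+)} \le N\, \|u\|_{L_2(Q_{3/4}^+)}
\]
for every $\beta \in (0, \delta_0)$ and any prescribed $k, j$ and $l \in \{0,1\}$.

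Second, I would handle the weights. Since $u$, $u_t$, and $D_{x'}u$ all vanish on $\{x_d = 0\}$ (the equation and the zero boundary condition are preserved by tangential differentiation), Taylor expansion at the boundary together with the previous step yields
\[
u(t, x) = x_d\, D_d u(t, x', 0) + O(x_d^{1 + \delta_0}),
\]
and similarly for $u_t$ and $D_{x'}u$. I would then pick $\beta_1 = \beta_1(\alpha, \beta_0) \in (0, 1]$ small enough that the H\"older exponent of the weight $x_d^{s}$ (for each relevant $s$) and the exponent $\delta_0$ from Lemma \ref{lem:boundary} both dominate $\beta_1$. A direct computation of the derivatives of $\chi^{-\beta_0} u$, $\chi^{-\beta_0} u_t$, and $\chi^{\alpha - \beta_0} D D_{x'} u$, combined with the vanishing of $u, u_t, D_{x'}u$ on the boundary and with $\beta_0 \le \min(1, \alpha)$ (so that the residual powers of $x_d$ are non-negative), yields \eqref{eq:hom-b3}, \eqref{eq:hom-b4}, and \eqref{eq:hom-b5}. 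For \eqref{eq:hom-b-Du}, the tangential part $D_{x'}u$ is treated as for \eqref{eq:hom-b5}, while the normal component $D_d u$ is already in $C^{1, \delta_0}$ up to the boundary by Lemma \ref{lem:boundary}, so multiplying by the weight $\chi^{\beta_0}$ (again with $\beta_0 \le 1$) preserves H\"older continuity.

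Finally, I would replace the implicit $L_2(Q_{3/4}^+)$-norm of $u$ by the weighted $L_{p_0}(Q_{3/4}^+, x_d^{\alpha_0}dz)$-norm of the appropriate weighted quantity on the right-hand side. For $p_0 \ge 2$, this is immediate from H\"older's inequality, since $Q_{3/4}^+$ is bounded and $x_d^{\alpha_0}$ is locally integrable as $\alpha_0 > -1$. For $p_0 \in (1, 2)$, I would run the foregoing argument on a shrinking family $Q_r^+$ with $1/2 \le r \le 3/4$ and absorb the sup-norm of the weighted quantity into the left-hand side via a standard interpolation inequality, which reduces $L_2$ to any $L_{p_0}$ weighted by $x_d^{\alpha_0}$. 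The main obstacle is the second step: the Taylor expansion has to be carried out at exactly the order dictated by $\beta_0$ (with the limit case $\beta_0 = \min(1, \alpha)$ requiring a second-order expansion involving $D_{dd}u$, whose degenerate structure forces a careful use of the PDE), and the exponent $\beta_1$ must be extracted uniformly from $\alpha$ and $\beta_0$. In particular, the weight $\chi^{\alpha - \beta_0}$ in \eqref{eq:hom-b5} can have exponent equal to zero, so one must not lose any H\"older regularity when pairing the merely $C^{1, \delta_0}$ normal derivative of $D_{x'} u$ with this weight.
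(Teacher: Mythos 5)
Your plan is essentially the paper's: use that tangential derivatives $\partial_t^k D_{x'}^j u$ again solve \eqref{eq:hom} with zero trace, iterate Caccioppoli and Lemma~\ref{lem:boundary} to obtain $L_\infty$ and H\"older control of $\partial_t^k D_{x'}^j D_d^l u$, and then absorb the weights by integrating $D_d u$ from the boundary (your Taylor expansion $u(t,x) = x_d D_d u(t,x',0)+O(x_d^{1+\delta_0})$ is the same device as the paper's identity $x_d^{-\beta_0} u = x_d^{1-\beta_0}\int_0^1 (D_d u)(t,x',sx_d)\,ds$). Two remarks on details.

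First, your final step contains a genuine gap. You claim that for $p_0\ge 2$ the replacement of the unweighted $\|u\|_{L_2(Q_{3/4}^+)}$ by $\|\chi^{-\beta_0}u\|_{L_{p_0}(Q_{3/4}^+, x_d^{\alpha_0}dz)}$ on the right is ``immediate from H\"older's inequality'' and reserve the iteration argument for $p_0<2$. This misidentifies where the difficulty lies. When $\alpha_0>0$ the weight $x_d^{\alpha_0}\le 1$ on $Q_{3/4}^+$ works \emph{against} you: one cannot bound $\|v\|_{L_2(Q^+,\, dz)}$ by $\|v\|_{L_{p_0}(Q^+,\, x_d^{\alpha_0}dz)}$ by H\"older alone for any $p_0$, because the complementary integral $\int_{Q^+} x_d^{-2\alpha_0/(p_0-2)}\,dz$ need not converge. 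The easy regime is $\alpha_0\le 0$ (where $x_d^{\alpha_0}\ge 1$ and the weighted norm dominates the unweighted one), not $p_0\ge 2$. The paper resolves $\alpha_0>0$ by an interpolation-in-weights inequality of the form
\[
\|u\|_{L_2(Q^+, dz)} \le \|u\|^{2\alpha_0/(1+2\alpha_0)}_{L_2(Q^+,x_d^{-1/2}dz)}\,\|u\|^{1/(1+2\alpha_0)}_{L_2(Q^+,x_d^{\alpha_0}dz)},
\]
followed by $\|u\|_{L_2(Q^+,x_d^{-1/2}dz)}\le N\|u\|_{L_\infty(Q^+)}$ and a standard absorption/iteration argument on shrinking cylinders; this is the step you must include for every $p_0$ whenever $\alpha_0>0$. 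With that in place, passing from $p_0=2$ to $p_0>2$ is indeed elementary, and the $p_0\in(1,2)$ case is handled by first upgrading a solution in $\cH^1_{p_0}$ to $\cH^1_2$ via Lemma~\ref{lem:boundary}, as you suggest.

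Second, your concern that the endpoint $\beta_0=\min\{1,\alpha\}$ requires a second-order Taylor expansion involving $D_{dd}u$ is unnecessary: the first-order integral representation above suffices for all $\beta_0\le 1$ and is exactly what the paper uses. At $\beta_0=\alpha$, the weight $\chi^{\alpha-\beta_0}$ in \eqref{eq:hom-b5} disappears and one simply invokes the unweighted $C^{1,\delta_0}$ bound on $DD_{x'}u$ from Lemma~\ref{lem:boundary}, so no loss of H\"older regularity occurs there.
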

\begin{proof}
Again, we assume \eqref{add-cond}.
Note that once the lemma with $\alpha_0 \geq 0$ is proved, the case when $\alpha_0 \in (-1, 0)$ will follow immediately.
Hence, we only need to prove the lemma with the assumption that $\alpha_0 \geq 0$.  We first assume $p_0 =2$.
Since $\beta_0\le \min\{1,\alpha\}$, by \eqref{eq:hom-b1} and the boundary Poincar\'e inequality, the right-hand sides of \eqref{eq:hom-b3}, \eqref{eq:hom-b4}, and \eqref{eq:hom-b5} are all finite.
We consider two cases.

\medskip
\noindent
{\em Case 1: $\beta_0=0$.} When $\alpha_0=0$, \eqref{eq:hom-b3} and \eqref{eq:hom-b-Du} follow from \eqref{eq:hom-b2} and \eqref{eq:hom-b1}. For general $\alpha_0\ge 0$, by   \eqref{eq:hom-b3} with $\beta_0=0$ and $\alpha_0=0$ and H\"older's inequality, we have
\begin{align*}
\|u\|_{L_\infty(Q_{1/2}^+)}&  \le N \|u\|_{L_2(Q_{2/3}^+)} \leq N\|u\|^{2\alpha_0/(1+2\alpha_0)}_{L_2(Q_{2/3}^+,x_d^{-1/2}\,dz)}
\|u\|^{1/(1+2\alpha_0)}_{L_2(Q_{2/3}^+,x_d^{\alpha_0}\,dz)}\\
&\le N\|u\|^{2\alpha_0/(1+2\alpha_0)}_{L_\infty(Q_{2/3}^+)}
\|u\|^{1/(1+2\alpha_0)}_{L_2(Q_{2/3}^+,x_d^{\alpha_0}\,dz)} \\
&  \leq \frac{1}{2} \|u\|_{L_\infty(Q_{2/3}^+)} + N\|u\|_{L_2(Q_{2/3}^+,x_d^{\alpha_0}\,dz)},
\end{align*}
where $N = N(d, \nu, \alpha, \alpha_0)>0$. From this and the standard iteration argument (see \cite[p. 75]{HanLin} for example), we obtain
\begin{equation} \label{est.alpha-zero}
\|u\|_{L_\infty(Q_{1/2}^+)} \leq N \|u\|_{L_2(Q_{3/4}^+,x_d^{\alpha_0}\,dz)}.
\end{equation}
The above, together with Lemma \ref{caccio}, yields
\begin{equation}
                \label{eq:hom-b6}
\int_{Q_{1/2}^+}|\partial_t^k D_{x'}^j D_{d}^l u|^2 \,dz
\le N(d,\nu,\alpha, \alpha_0,k,j,l) \int_{Q_{3/4}^+} u^2 x_d^{\alpha_0}\,dz
\end{equation}
for any integers $k,j\ge 0$ and $l=0,1$.
Using this last estimate, \eqref{eq:hom-b2}, and by suitably adjusting the sizes of the cylinders, we obtain  \eqref{eq:hom-b3} with $\beta_1 = 1$.
Similar to \eqref{est.alpha-zero}, we have
\[
\|Du\|_{L_\infty(Q_{1/2}^+)} \leq N \|Du\|_{L_2(Q_{3/4}^+,x_d^{\alpha_0}\,dz)}.
\]
From this, \eqref{eq:hom-b2}, and by shrinking the cylinders, we obtain
\[
\|Du\|_{C^{1,\delta_0}(Q_{1/2}^+)} \leq N \|Du\|_{L_2(Q_{3/4}^+,x_d^{\alpha_0}dz)}, \quad \text{where} \,\,  \delta_0=\min\{2-\alpha,1\},
\]
which is \eqref{eq:hom-b-Du} when $\beta_0 =0$.

Since $u_t$ and $D_{x'} u$ satisfy the same equation with the same boundary condition, similarly we also obtain \eqref{eq:hom-b4} as well as
\begin{equation}
        \label{eq:hom-b7}
\|DD_{x'}u\|_{C^{1,\delta_0}(Q_{1/2}^+)}\le N\|DD_{x'}u\|_{L_2(Q_{2/3}^+)},
\end{equation}
by Lemma \ref{lem:boundary}.
This together with \eqref{eq:hom-b6} implies \eqref{eq:hom-b5} with
$$\beta_1=\min\{\delta_0,\alpha \} = \min\{\alpha, 2-\alpha,1\}.$$

\medskip
\noindent
{\em Case 2: $\beta_0\neq 0$.}
We first prove \eqref{eq:hom-b5}.  By \eqref{eq:hom-b7} and by using the iteration argument as in \eqref{est.alpha-zero}, we have
\[
\|DD_{x'} u\|_{L_\infty(Q_{1/2}^+)} \leq N\|DD_{x'} u\|_{L_2(Q_{3/4}^+, x_d^{\alpha_0}\, dz)},
\]
where $N = N(d, \nu, \alpha, \alpha_0)>0$. Then, it follows from \eqref{eq:hom-b7}  that
\begin{equation} \label{eq:hom-b7-bis}
\|DD_{x'}u\|_{C^{1,\delta_0}(Q_{1/2}^+)}\le N\|DD_{x'}u\|_{L_2(Q_{3/4}^+, x_d^{\alpha_0}\, dz)}.
\end{equation}
Therefore, if $\beta_0=\alpha$, \eqref{eq:hom-b5} with $\beta_1=\delta_0$ follows from \eqref{eq:hom-b7-bis}. If $\beta_0<\alpha$,  it follows from \eqref{eq:hom-b7-bis} that
\[
\|DD_{x'}u\|_{C^{1,\delta_0}(Q_{1/2}^+)}\le N\|\chi^{{\alpha-\beta_0}}DD_{x'}u\|_{L_2(Q_{3/4}^+, x_d^{\alpha_0}\, dz)}
\]
where $N = N(d, \nu, \alpha, \beta_0, \alpha_0)>0$.
Then we also have \eqref{eq:hom-b5} with
\[ \beta_1=\min\{\delta_0,\alpha-\beta_0\} = \min\{2-\alpha,1,\alpha-\beta_0\}. \]
Similarly, \eqref{eq:hom-b-Du} can be deduced from \eqref{eq:hom-b-Du} with $\beta_0 =0$ by taking $\beta_1 = \min\{\delta_0, \beta_0\}$. Hence, both \eqref{eq:hom-b5} and \eqref{eq:hom-b-Du} hold with
\[
\beta_1=\min\{\delta_0,\alpha-\beta_0,{\beta_0}\} = \min\{2-\alpha,1,\alpha-\beta_0,   \beta_0\}.
\]

Next we show \eqref{eq:hom-b3}.
Since $\beta_0\le 1$, using the zero boundary condition, \eqref{eq:hom-b2}, and \eqref{eq:hom-b6}, we get
\begin{equation}
                        \label{eq:hom-b8}
\|\chi^{-\beta_0} u\|_{L_\infty(Q_{1/2}^+)}
\le N \|D_d u\|_{L_\infty(Q_{1/2}^+)}\le N\|u\|_{L_2(Q_{3/4}^+,x_d^{\alpha_0}\,dz)}.
\end{equation}
Since $u_t$ and $D_{x'} u$ satisfy the same equation and the same boundary condition, we have
\begin{align}
                        \label{eq:hom-b9}
&\|\chi^{-\beta_0} u_t\|_{L_\infty(Q_{1/2}^+)}
+\|\chi^{-\beta_0}D_{x'} u\|_{L_\infty(Q_{1/2}^+)}\notag\\
&\le \  N \|D_d u_t\|_{L_\infty(Q_{1/2}^+)}+N \|D_dD_{x'} u\|_{L_\infty(Q_{1/2}^+)}\notag\\
 &\le \ N \|D u\|_{L_2(Q_{2/3}^+,x_d^{\alpha_0})}\le N\|u\|_{L_2(Q_{3/4}^+,x_d^{\alpha_0}\,dz)},
\end{align}
where we used \eqref{eq:hom-b2}.
To estimate the H\"older semi-norm of $\chi^{-\beta_0} u$ in $x_d$, we write
$$
x_d^{-\beta_0} u(t,x)=x_d^{1-\beta_0}\int_0^1 (D_d u)(t,x',sx_d)\,ds
$$
and use \eqref{eq:hom-b2} and \eqref{eq:hom-b6}. Then we see that
\[
\llbracket \chi^{-\beta_0} u \rrbracket_{C^{0, \beta_1}(Q_{1/2}^+)} +
\llbracket \chi^{-\beta_0} \partial_t u \rrbracket_{C^{0, \beta_1}(Q_{1/2}^+)} \leq N \|\chi^{-\beta_0} u\|_{L_2(Q_{3/4}^+, x_d^{\alpha_0}\,dz)}
\]
where  $\beta_1 = \min\{\delta_0, 1-\beta_0\}$. Combining this with \eqref{eq:hom-b8} and \eqref{eq:hom-b9}, we reach \eqref{eq:hom-b3}.

Note that $u_t$ satisfies the same equation and the same boundary condition, we deduce \eqref{eq:hom-b4} from \eqref{eq:hom-b3}.  The proof  of the lemma when $p_0 =2$ is completed.

Next, we observe that when $p_0 >2$, the estimates \eqref{eq:hom-b3}--\eqref{eq:hom-b-Du} can be derived directly from the case $p_0 =2$ that we just proved using H\"{o}lder's inequality. On the other hand, when $p_0 \in (1, 2)$, it follows from Lemma \ref{lem:boundary} that $u \in \cH_2^1(Q_{3/4}^+)$. Then, by shrinking the cylinders, we apply the assertion when $p_0=2$ that we just proved and an iteration argument as in the proof of \eqref{est.alpha-zero} to obtain the estimates \eqref{eq:hom-b3}--\eqref{eq:hom-b-Du}.
\end{proof}
%We also have the following proposition, whose proof is similar to that of \eqref{eq:hom-b5}.
%\begin{lemma} \label{prop:boundary-Du}
%Let $p_0 \in (1, \infty), \beta \geq 0,$ and $\alpha_0 > -1$. If  $u\in \cH^1_{p_0}(Q_1^+)$ is a solution to \eqref{eq:hom}, then
%\begin{equation}
%        \label{eq:hom-b-Du}
%\|\chi^{\beta}Du\|_{C^{1,\beta_1}(Q_{1/2}^+)} \le N\|\chi^{\beta}Du\|_{L_{p_0}(Q_{3/4}^+,x_d^{\alpha_0})},
%\end{equation}
%where $N=N(d,\nu,\alpha,\alpha_0)>0$, and $\beta_1 = \delta_0$ when $\beta=0$ and $\beta_1(\alpha, \beta) = \min\{\delta_0, \beta\}$ when $\beta>0$.
%\end{lemma}
%\begin{proof}  As in the proof of Lemma \ref{prop:boundary}, we only need to consider the case $p_0=2$ and $\alpha_0 \geq 0$. Similar to the proof of \eqref{est.alpha-zero}, we have
%\[
%\|Du\|_{L_\infty(Q_{1/2}^+)} \leq N \|Du\|_{L_2(Q_{3/4}^+,x_d^{\alpha_0})}.
%\]
%From this, \eqref{eq:hom-b2}, and by shrinking the cylinders, we obtain
%\[
%\|Du\|_{C^{1,\delta_0}(Q_{1/2}^+)} \leq N \|Du\|_{L_2(Q_{3/4}^+,x_d^{\alpha_0})},
%\]
%which is \eqref{eq:hom-b-Du} when $\beta =0$. The case when $\beta>0$ also follows as in the proof of Lemma \ref{prop:boundary} by taking $\beta_1 = \min\{\delta_0, \beta\}$.
%\end{proof}
\begin{remark} The number $\beta_1$ defined in Lemma \ref{prop:boundary} can be found explicitly.
However, we do not need its explicit formula in the paper.
\end{remark}
\subsection{Interior H\"older estimates for homogeneous equations}  \label{subsec:int}
Fix a point $z_0 = (t_0, x_0) \in \Omega_T$, where $x_0 = (x_0', x_{0d}) \in \bR^{d-1} \times \bR_+$.
For $0<\rho < x_{0d}$ and $\beta \in (0,1)$, we  define the  weighted $\beta$-H\"{o}lder semi-norm of a function $u$ on $Q_\rho(z_0)$ by
\[
\begin{split}
\llbracket u\rrbracket_{C^{\beta/2, \beta}_{\alpha}(Q_\rho(z_0))} & = \sup \Big\{  \frac{|u(s,x) - u(t, y)|}{\big(x_{0d}^{-\alpha/2}|x-y| + |t-s|^{1/2}\big)^{\beta}}: (s,x) \not=(t,y) \\
& \qquad \qquad \qquad \text{and } (s,x), (t,y) \in Q_\rho(z_0)  \Big\}.
\end{split}
\]
As usual, we denote the corresponding weighted norm by
\[
\|u\|_{C^{\beta/2,  \beta}_{\alpha}(Q_\rho(z_0))} = \|u\|_{L_\infty(Q_\rho(z_0))} + \llbracket u\rrbracket_{C^{\beta/2, \beta}_{\alpha}(Q_\rho(z_0))}.
\]
The following result is the interior H\"older estimates of solutions to the homogeneous equation \eqref{eq:hom-div}.

\begin{lemma}\label{prop:int}
Let $z_0 = (t_0, x_0) \in \Omega_T$,  where $x_0 = (x_0', x_{0d}) \in \bR^{d-1} \times \bR_+$, and $\rho \in (0, x_{0d}/4)$.
Let $u \in \sW_{p_0}^{1,2}(Q_{2\rho}(z_0))$ be a strong solution of
\begin{equation*}
\sL_0 u=0  \quad \text{ in } Q_{2\rho}(z_0)
\end{equation*}
with some $p_0 \in (1, \infty)$. Then  for any $\beta \in \bR$,
\begin{align*}
&  \|\chi^{\beta} u\|_{L_\infty(Q_{\rho}(z_0))} +   \rho ^{(1-\alpha/2)/2} \llbracket \chi^{\beta} u\rrbracket_{C^{1/4, 1/2}_\alpha(Q_{\rho}(z_0))} \\
    &\leq  N \left(\fint_{ Q_{2\rho}( z_0)} |x_{d}^{\beta}u|^{p_0} \mu_0(dz)\right)^{1/p_0},
\end{align*}
and
\begin{align*}
& \|\chi^{\beta}Du\|_{L_\infty(Q_{\rho}(z_0))}
 + \rho^{(1-\alpha/2)/2} \llbracket {\chi^{\beta} Du}\rrbracket_{C^{1/4, 1/2}_\alpha(Q_{\rho}(z_0))} \\
   &\leq N \left(\fint_{ Q_{2\rho}( z_0)} |x_d^{\beta}Du|^{p_0} \mu_0(dz) \right)^{1/p_0},
\end{align*}
where $\mu_0(dz) = x_d^{\alpha_0}dtdx$ with some $\alpha_0 > -1$, and $N = N(\nu, d,\alpha, \beta, \alpha_0)>0$.
\end{lemma}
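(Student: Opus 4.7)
The condition $\rho<x_{0d}/4$ together with $\alpha\in(0,2)$ forces $Q_{2\rho}(z_0)$ to lie in the strict interior: from \eqref{def:r}, $r(2\rho,x_{0d})=x_{0d}^{\alpha/2}(2\rho)^{1-\alpha/2}\le x_{0d}\cdot 2^{\alpha/2-1}<x_{0d}$, and hence $x_d\sim x_{0d}$ throughout $Q_{2\rho}(z_0)$ with constants depending only on $\alpha$. In particular $\mu(x_d)/x_{0d}^\alpha$, the factor $x_d^\beta$ appearing on both sides of the asserted inequalities, and the weight $x_d^{\alpha_0}$ defining $\mu_0$ are all comparable to positive constants, so they can be absorbed into the universal constant and the claim reduces to an unweighted interior estimate.

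I would then flatten the cylinder via the substitution
\[
\sigma:=t-t_0,\qquad \tilde y:=x_{0d}^{-\alpha/2}(x-x_0),\qquad w(\sigma,\tilde y):=u(t_0+\sigma,x_0+x_{0d}^{\alpha/2}\tilde y),
\]
which sends $Q_{2\rho}(z_0)$ to the standard parabolic cylinder $\tilde Q_R:=(-R^2,0)\times B_R(0)$ with $R:=(2\rho)^{1-\alpha/2}$. Using \eqref{add-cond}, a direct computation gives that $w$ satisfies
\[
\bar a_0(x_d)\,w_\sigma+\lambda\bar c_0(x_d)\,w-\hat a_{ij}(\tilde y_d)D_{\tilde y_i\tilde y_j}w=0\quad\text{in }\tilde Q_R,
\]
where $\hat a_{ij}(\tilde y_d):=\mu(x_d)x_{0d}^{-\alpha}\bar a_{ij}(x_d)$ with $x_d=x_{0d}+x_{0d}^{\alpha/2}\tilde y_d$. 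By the previous paragraph $(\hat a_{ij})$ is uniformly elliptic and bounded with constants depending only on $\nu$ and $\alpha$, $\hat a_{dd}=\mu(x_d)x_{0d}^{-\alpha}\sim 1$, and all coefficients depend only on $\tilde y_d$.

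Since the coefficients depend only on $\tilde y_d$, $\partial_\sigma w$ and $D_{\tilde y'}w$ satisfy the same homogeneous equation as $w$. The iteration scheme used in Lemma \ref{caccio} (adapted to the interior setting, where no boundary term is present and the argument simplifies) then yields
\[
\|\partial_\sigma^k D_{\tilde y'}^j D_{\tilde y_d}^l w\|_{L_2(\tilde Q_{R/2})}\le N\,\|w\|_{L_2(\tilde Q_R)}\quad\text{for all }k,j\ge 0,\ l\in\{0,1\},
\]
and the equation, solved for $D_{\tilde y_d\tilde y_d}w$ using $\hat a_{dd}\sim 1$, extends this to $l=2$. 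Sobolev embedding in the $(\sigma,\tilde y')$ variables upgrades these $L_2$ bounds to $L_\infty(\tilde Q_{R/4})$ bounds on $w$, $w_\sigma$, $Dw$, and $D^2w$, all controlled by $\|w\|_{L_{p_0}(\tilde Q_R)}$; passing from $L_2$ to $L_{p_0}$ is routine via H\"older's inequality when $p_0\ge 2$ and by a bootstrap argument when $p_0<2$. For the gradient estimate, the same argument applied to $D_{\tilde y'}w$ (which itself satisfies the equation) produces $\|D_{\tilde y'}w\|_{L_\infty}\le N\,\|D_{\tilde y'}w\|_{L_{p_0}}$, while the normal component $D_{\tilde y_d}w$ is recovered by interpolating between its $L_{p_0}$-norm and the $L_\infty$-bound on $D_{\tilde y_d\tilde y_d}w$ derived from the equation.

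Once $w$ and $Dw$, together with their first derivatives in $(\sigma,\tilde y)$, are bounded in $L_\infty(\tilde Q_{R/4})$, both $w$ and $Dw$ are Lipschitz in the parabolic sense, which is strictly stronger than the $C^{1/4,1/2}$ parabolic Hölder regularity required. Undoing the change of variables produces the weighted seminorm $\llbracket\cdot\rrbracket_{C^{1/4,1/2}_\alpha(Q_\rho(z_0))}$ with scaling factor $R^{1/2}\sim\rho^{(1-\alpha/2)/2}$ matching the one in the statement, while $x_d\sim x_{0d}$ absorbs all remaining weight factors on both sides. The main technical obstacle is the $L_\infty$ bound on the normal derivative $D_{\tilde y_d}w$: differentiating in $\tilde y_d$ does not preserve the equation, and the gradient statement precludes passing through $\|w\|_{L_{p_0}}$, so one must combine the equation, iterated tangential and temporal regularity, and an interpolation step on $\tilde y_d$-slices to close the estimate.
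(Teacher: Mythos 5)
You correctly isolate the key reduction: with $\rho<x_{0d}/4$ and $\alpha\in(0,2)$, \eqref{def:r} gives $r(2\rho,x_{0d})<x_{0d}$, so $x_d\sim x_{0d}$ throughout $Q_{2\rho}(z_0)$, and both $x_d^\beta$ and the weight $x_d^{\alpha_0}$ can be absorbed into a universal constant. This is exactly how the paper passes from $\beta=0$ with Lebesgue measure to general $\beta$ and $\mu_0$. The remainder of the paper's proof, however, is essentially a citation: reduce to $p_0=2$ as in Lemma~\ref{prop:boundary}, normalize $x_{0d}=1$, and invoke \cite[Proposition 4.6]{DPT21} for the $\beta=0$ case. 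Your proposal takes a fundamentally different (and much longer) route --- a from-scratch re-derivation of that interior estimate via the flattening change of variables, Caccioppoli iteration, and Sobolev embedding. The flattening itself, the resulting uniformly parabolic equation with $\tilde y_d$-only coefficients, and the treatment of the first estimate (on $u$) all look sound, and the rescaling back onto $Q_\rho(z_0)$ correctly produces the factor $\rho^{(1-\alpha/2)/2}$ in front of the $C^{1/4,1/2}_\alpha$-seminorm.

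For the gradient estimate, though, the obstruction you flag in your final paragraph is a genuine gap, not a loose end. The equation expresses $D_{\tilde y_d\tilde y_d}w$ only in terms of $w_\sigma$, $\lambda w$, and the tangential second derivatives, so your interpolation-along-$\tilde y_d$-slices idea would require controlling both $\|w_\sigma\|$ and $\lambda\|w\|$ by $\|Dw\|_{L_{p_0}}$. In the interior there is no Hardy/Poincar\'e mechanism linking $w$ to $Dw$ (unlike the boundary case of Lemma~\ref{prop:boundary}, where $u=0$ on $\{x_d=0\}$ closes that loop), and the constant $N$ in the lemma is uniform in $\lambda$, so the $\lambda w$ term cannot be traded for $w_\sigma$ and $D^2w$ through the equation without losing that uniformity. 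Your sketch correctly identifies the difficulty --- the normal derivative does not satisfy any equation because the coefficients are merely measurable in $x_d$ --- but ``combine the equation, iterated regularity, and an interpolation step'' does not resolve it: the circularity described above is exactly what has to be broken, and the work of doing so is what is hidden inside the citation to \cite[Proposition 4.6]{DPT21} that the paper relies on.
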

\begin{proof}  As in the proof of Lemma \ref{prop:boundary}, we may assume that $p_0 =2$. Without loss of generality, we assume that $x_{0d}=1$. Note that when $\beta = 0$, the assertions follow directly from \cite[Proposition 4.6]{DPT21}. In general, the assertions follow from the case when $\beta =0$ and the fact that
\[
\left(\fint_{ Q_{2\rho}( z_0)} |\chi^{\beta}f(z)|^{p_0} \mu_0(dz) \right)^{1/p_0} \approx \left(\fint_{ Q_{2\rho}( z_0)} |f(z)|^{p_0}  dz \right)^{1/p_0}.
\]
The lemma is proved.
\end{proof}
\subsection{Mean oscillation estimates} \label{subsec:osc-est} In this subsection, we apply Lemmas \ref{prop:boundary} and \ref{prop:int} to derive the mean oscillation estimates of
$$
U = (\chi^{-\beta_0} u_t,  \chi^{\alpha-\beta_0}DD_{x'} u, \lambda \chi^{-\beta_0}u) \qquad \text{and} \qquad Du
$$
respectively with the underlying measure
\begin{equation} \label{mu-1-def}
\mu_1(dz) = x_d^{\gamma_1}\,dx dt \qquad \text{and} \qquad \bar{\mu}_1(dz) = x_d^{\bar{\gamma}_1} dxdt,
\end{equation}
where $u$ is a strong solution of \eqref{eq:xd},
\[ \gamma_1 \in (p_0(\beta_0-\alpha +1)-1, p_0(\beta_0-\alpha+2)-1) \quad \text{and} \quad \bar{\gamma}_1 =\gamma_1 + p_0 (\alpha /2-\beta_0) \]
with some $p_0 \in (1, \infty)$ and $\beta_0 \in (\alpha-1, \min\{1, \alpha\}]$. The main result of the subsection is Lemma \ref{oscil-lemma-2} below.

\smallskip
Let us point out that both $\mu_1$ and $\bar{\mu}_1$ depend on the choice of $\beta_0$, and
\begin{equation} \label{mu1=bar-mu-1}
\mu_1= \bar{\mu}_1 \quad \text{when} \quad \beta_0 = \alpha/2.
\end{equation}
To get the weighted estimate of $U$ in $L_p(\Omega_T, x_d^\gamma\, dz)$ with the optimal range for $\gamma$ as in Theorem \ref{thm:xd}, we will use $\beta_0 = \min\{1, \alpha\}$. On the other hand, to derive the estimate for $Du$, we will use $\beta_0 = \alpha/2$ and \eqref{mu1=bar-mu-1}.

\smallskip
For the reader's convenience, let us also recall that for a cylinder $Q \subset \bR^{d+1}$, a locally finite measure $\omega$, and an $\omega$-integrable function $g$ on $Q$, we denote the average of $g$ on $Q$ with respect to the measure $\omega$ by
\[
(g)_{Q, \omega} = \frac{1}{\omega(Q)}\int_{Q} g(z)\, \omega(dz)
\]
and  the average of $g$ on $Q$ with respect to the Lebesgue measure by
\[
 (g)_{Q} =\frac{1}{|Q|} \int_{Q} g(z)\, dz.
\]
We begin with the following lemma on the mean oscillation estimates of solutions to the homogeneous equations.
\begin{lemma} \label{oscil-lemma-1} Let $\nu \in (0,1)$, $\alpha \in (0,2)$, $p_0 \in (1, \infty)$, $\beta_0 \in {(\alpha-1}, \min\{1, \alpha\}]$, and $\gamma_1 \in (p_0(\beta_0-\alpha +1) -1, p_0(\beta_0-\alpha+2)-1)$. There exists $N = N(d,\nu, \alpha, \gamma_1, \beta_0, p_0)>0$ such that if $u \in \sW^{1,2}_{p_0}(Q_{14\rho}^+(z_0), x_d^{\gamma_1'}\, dz)$ is a strong solution of
\[
\left\{
\begin{array}{cccl}
\sL_0 u & =& 0 & \quad \text{in} \quad Q_{14\rho}^+(z_0) \\
u & = & 0 & \quad \text{on} \quad Q_{14\rho}(z_0) \cap \{x_d =0\}
\end{array}
\right.
\]
for some $\lambda>0, \rho>0$, $z_0 =(z_0', x_{d0}) \in \overline{\Omega}_T$, and for $\gamma_1' = \gamma_1 -p_0(\beta_0-\alpha)$, then
\begin{equation} \label{osc-h}
(|U - (U)_{Q_{\kappa \rho}^+(z_0), \mu_1}|)_{Q_{\kappa \rho}^+(z_0), \mu_1} \leq N \kappa^{\theta} (|U|^{p_0})_{Q_{14 \rho}^+(z_0), \mu_1}^{1/p_0}
\end{equation}
and
\begin{equation} \label{Du-osc-h}
(|Du - (Du)_{Q_{\kappa \rho}^+(z_0),  \bar{\mu}_1}|)_{Q_{\kappa \rho}^+(z_0), \bar{\mu}_1} \leq N \kappa^{\theta} (|Du|^{p_0})_{Q_{14 \rho}^+(z_0),  \bar{\mu}_1}^{1/p_0}
\end{equation}
for every $\kappa \in (0,1)$, where $\mu_1, \bar{\mu}_1$ are defined in \eqref{mu-1-def},
\[
U = (\chi^{-\beta_0} u_t,  \chi^{\alpha-\beta_0}DD_{x'} u, \lambda \chi^{-\beta_0}u),
\]
and $\theta = \min\{\beta_1(\alpha, \beta_0),  (2-\alpha)/4, 2-\alpha, 1\} \in (0,1)$ in which $\beta_1$ is defined in lemma \ref{prop:boundary}.
\end{lemma}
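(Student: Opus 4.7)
The plan is to rescale to $\rho = 1$ using the natural scaling \eqref{scaling}, and then split into two regimes according to whether $x_{0d}$ is small (boundary case) or large (interior case) compared to $1$. The standard reduction is that for any bounded function $g$ and locally finite measure $\omega$, the weighted mean oscillation is bounded by the pointwise oscillation: $(|g-(g)_{Q,\omega}|)_{Q,\omega}\le 2\,\mathrm{osc}_Q(g)\le 2[g]_{C^\theta}\,(\mathrm{diam}_\varrho Q)^\theta$. It therefore suffices to produce a H\"older estimate for each component of $U$ and for $Du$ on $Q_{\kappa}^+(z_0)$ whose seminorm is controlled by the appropriate weighted $L_{p_0}$ average on $Q_{14}^+(z_0)$.

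In the boundary case ($x_{0d}$ bounded by a fixed constant) I would apply Lemma \ref{prop:boundary} to the weighted quantities $\chi^{-\beta_0}u$, $\chi^{-\beta_0}u_t$, $\chi^{\alpha-\beta_0}DD_{x'}u$, and $\chi^{\beta_0}Du$, choosing the free parameter $\alpha_0$ in that lemma so that the weighted $L_{p_0}$ norm on the right-hand side matches the $\mu_1$-average (respectively $\bar\mu_1$-average) on $Q_{14}^+(z_0)$ after absorbing the $\chi^{s}$-factors. The hypothesis $\gamma_1\in (p_0(\beta_0-\alpha+1)-1,\,p_0(\beta_0-\alpha+2)-1)$ translates exactly into $\alpha_0>-1$ for each such application. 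The H\"older exponent $\beta_1$ in space, together with a time-oscillation bound of order $\kappa^{2-\alpha}\|u_t\|_\infty$ (used, for example, to reduce $u$ to a purely spatial oscillation), then produces decay of order $\kappa^{\min\{\beta_1,2-\alpha,1\}}$, accounting for the first three terms in the definition of $\theta$.

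In the interior case ($x_{0d}$ large compared to $1$), $Q_{14}^+(z_0)=Q_{14}(z_0)$ lies away from $\{x_d=0\}$ and I would invoke Lemma \ref{prop:int} with $\beta=-\beta_0$ for the components of $U$ and $\beta=\beta_0$ for $Du$. Since $x_d\sim x_{0d}$ throughout $Q_{14}(z_0)$, the weight $\chi^{\pm\beta_0}$ is comparable to a constant (depending on $x_{0d}$) and cancels from both sides, and likewise $\mu_1$, $\bar\mu_1$ are comparable to fixed multiples of Lebesgue measure on the cylinder. The $C^{1/4,1/2}_\alpha$ regularity delivered by Lemma \ref{prop:int} combined with the scaling factor $\rho^{(2-\alpha)/4}$ gives oscillation decay $\kappa^{(2-\alpha)/4}$, contributing the remaining term in the definition of $\theta$.

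The main obstacle is \eqref{Du-osc-h} in the boundary case, since Lemma \ref{prop:boundary} controls the H\"older norm of $\chi^{\beta_0}Du$ rather than of $Du$ itself. I would handle this via the decomposition
\begin{equation*}
Du(z)-Du(z')=\chi(z)^{-\beta_0}\bigl[\chi^{\beta_0}Du(z)-\chi^{\beta_0}Du(z')\bigr]+\bigl[\chi(z)^{-\beta_0}-\chi(z')^{-\beta_0}\bigr]\chi^{\beta_0}Du(z'),
\end{equation*}
estimating the first bracket by the H\"older control of $\chi^{\beta_0}Du$ and absorbing the singular factor $\chi(z)^{-\beta_0}$ into the $\bar\mu_1$-average (this is where the specific choice $\bar\gamma_1=\gamma_1+p_0(\alpha/2-\beta_0)$ is essential), and estimating the second term using the explicit modulus of continuity of $x_d\mapsto x_d^{-\beta_0}$ against the $L_\infty$ bound on $\chi^{\beta_0}Du$. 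The sharpness of the range of $\gamma_1$ comes precisely from requiring both contributions to stay integrable under $\bar\mu_1$.
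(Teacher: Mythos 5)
Your overall structure matches the paper's proof: rescale to $\rho=1$ via \eqref{scaling}, split into the boundary case $x_{0d}<4$ and the interior case $x_{0d}>4$, apply Lemma \ref{prop:boundary} (respectively Lemma \ref{prop:int}) to each weighted component, and reduce the weighted mean oscillation to a H\"older bound in space together with a time-difference bound of order $\kappa^{2-\alpha}$. That is exactly the paper's argument for \eqref{osc-h}, and for \eqref{Du-osc-h} in the interior case.

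Where you depart from the paper is the boundary-case estimate for $Du$, and this is where you introduce an unnecessary and potentially dangerous complication. The decomposition
\[
Du(z)-Du(z')=\chi(z)^{-\beta_0}\bigl[\chi^{\beta_0}Du(z)-\chi^{\beta_0}Du(z')\bigr]+\bigl[\chi(z)^{-\beta_0}-\chi(z')^{-\beta_0}\bigr]\chi^{\beta_0}Du(z')
\]
injects the singular factor $\chi^{-\beta_0}$ (unbounded as $x_d\to 0^+$), and your claim that it "can be absorbed into the $\bar\mu_1$-average" is not substantiated. Integrating $x_d^{-\beta_0}$ against $\bar\mu_1$ over $Q_\kappa^+$ in the worst case (when $x_{0d}\lesssim\kappa$) costs a factor $\kappa^{-\beta_0}$, and there is no reason a priori for the net exponent $\beta_1-\beta_0$ to be positive; depending on $\alpha$ and the choice $\beta_0=\alpha/2$ it can be zero or negative, so the $\kappa^\theta$ decay is at risk. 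The second bracket is worse still: $|x_d^{-\beta_0}-y_d^{-\beta_0}|$ is not a bounded modulus of continuity near $x_d=0$, so controlling its $\bar\mu_1$-average requires using that $\chi^{\beta_0}Du$ vanishes at $\{x_d=0\}$ (which is true only for $\beta_0>0$) and then a fairly delicate double integral; you gesture at this but do not carry it out.

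The paper sidesteps all of this. In Lemma \ref{prop:boundary}, the symbol $\beta_0$ is a free parameter ranging over $(-\infty,\min\{1,\alpha\}]$, so it may be chosen independently of the $\beta_0$ you carry around in $\mu_1$, $\bar\mu_1$, and $U$. For the $Du$ estimate the paper simply takes that free parameter equal to $0$ in \eqref{eq:hom-b-Du}, which directly gives
\[
\|Du\|_{C^{1,\beta_1}(Q_{1/2}^+)} \le N\|Du\|_{L_{p_0}(Q_{3/4}^+, x_d^{\bar\gamma_1}\,dz)},
\]
with $\alpha_0=\bar\gamma_1>-1$ supplied by the inequality $\bar\gamma_1=\gamma_1+p_0(\alpha/2-\beta_0)>p_0(1-\alpha/2)-1>-1$. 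The right-hand side is exactly the $\bar\mu_1$-$L_{p_0}$ average, so no decomposition, no commutator term, and no singular factor ever appear. Your approach for \eqref{Du-osc-h} should be replaced by this direct application of \eqref{eq:hom-b-Du} with the weight parameter set to zero; the rest of your proposal is sound.

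One smaller inaccuracy: you assert that the hypothesis $\gamma_1\in(p_0(\beta_0-\alpha+1)-1,p_0(\beta_0-\alpha+2)-1)$ "translates exactly into $\alpha_0>-1$" in Lemma \ref{prop:boundary}. It implies $\gamma_1>-1$ (hence $\alpha_0>-1$), but the two-sided restriction on $\gamma_1$ is not used in this lemma at all; it is the hypothesis of Lemma \ref{oscil-lemma-2} and Lemma \ref{l-p-sol-lem} that requires $\gamma_1'=\gamma_1-p_0(\beta_0-\alpha)\in(p_0-1,2p_0-1)$, which is where both endpoints of the $\gamma_1$-interval enter.
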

\begin{proof}
	By using the scaling \eqref{scaling}, we assume that $\rho =1$. We consider two cases: the boundary case and the interior one.

\noindent
\smallskip
{\em Boundary case.}  Consider $x_{0d} <4$. Let $\bar{z} =(t_0, x_0', 0)$ and note that from the definition of cylinders in \eqref{def:Q}, we have
\[
Q_{1}^+(z_0) \subset Q_{5}^+(\bar{z}_0) \subset  Q_{10}^+(\bar{z}_0) \subset Q_{14}^+(z_0).
\]
Then, we apply the mean value theorem  and the estimates \eqref{eq:hom-b3}-\eqref{eq:hom-b5} in Lemma \ref{prop:boundary} with  $\gamma_1$ in place of $\alpha_0$.  %, and $\beta_0-\alpha/2$ in the estimate of $Du$.
We infer that
\[
\begin{split}
& (|U - (U)_{Q_{\kappa}^+(z_0), \mu_1}|)_{Q_{\kappa}^+(z_0), \mu_1} \\
& \leq N \kappa^{2-\alpha}
\|\partial_t U\|_{L^\infty(Q_1(z_0))} + N \kappa^{\beta_1} \llbracket U \rrbracket_{C^{0, \beta_1}(Q_{1}^+(z_0))} \\
& \leq N \kappa^{\theta}\big[ \|\partial_t U\|_{L^\infty(Q_{5}^+(\bar{z}))}  + \llbracket U \rrbracket_{C^{0, \beta_1}(Q_{5}^+(\bar{z}))}   \big] \\
&  \leq N \kappa^{\theta} (|U|^{p_0})_{Q_{10}^+(\bar{z}), \mu_1}^{1/p_0} \leq N \kappa^{\theta} (|U|^{p_0})_{Q_{14}^+(z_0), \mu_1}^{1/p_0},
\end{split}
\]
where we used the doubling property of $\mu_1$ in the last step.
This implies the estimate \eqref{osc-h} as $\kappa \in (0,1)$.  To estimate the oscillation of $Du$ as asserted in \eqref{Du-osc-h}, we note that
$$\bar{\gamma}_1 = \gamma_1 - p_0(\beta_0 -\alpha/2) > p_0 (1-\alpha/2)-1 >-1.  $$
Therefore,  \eqref{Du-osc-h} can be proved in a similar way as that of \eqref{osc-h} using the estimate \eqref{eq:hom-b-Du} in Lemma \ref{prop:boundary} with $\beta_0=0$ and $\alpha_0 = \bar{\gamma}_1 >-1$.

\ \\ \noindent
\smallskip
{\em Interior case.} Consider $x_{0d} >4\rho=4$. By using Lemma  \ref{prop:int} with $\beta = -\beta_0$ and the doubling property of $\mu_1$, we see that
\[
\begin{split}
& (|\chi^{-\beta_0} u - (\chi^{-\beta_0} u)_{Q_{\kappa}^+(z_0), \mu_1}|)_{Q_{\kappa}^+(z_0), \mu_1} \\
& \leq N \kappa^{1/2-\alpha/4}\llbracket \chi^{-\beta_0} u \rrbracket_{C_\alpha^{1/4, 1/2}(Q_{1}^+(z_0))} \\
& \leq N \kappa^{1/2-\alpha/4}   \left(\fint_{Q_{2}^+(z_0)}|\chi^{-\beta_0} u|^{p_0}\mu_1(dz) \right)^{1/p_0} \\
& \leq N \kappa^{1/2-\alpha/4}   \left(\fint_{Q_{14}^+(z_0)}|\chi^{-\beta_0} u|^{p_0} \mu_1(dz) \right)^{1/p_0}.
\end{split}
\]
Similarly, by using the finite difference quotient, we can apply Lemma \ref{prop:int} to $u_t$ and obtain
\[
\begin{split}
& (|\chi^{-\beta_0} u_t - (\chi^{-\beta_0} u_t)_{Q_{\kappa}^+(z_0), \mu_1}|)_{Q_{\kappa}^+(z_0), \mu_1} \\
& \leq N \kappa^{1/2-\alpha/4}   \left(\fint_{Q_{14}^+(z_0)}|\chi^{-\beta_0} u_t|^{p_0} \mu_1(dz) \right)^{1/p_0}.
\end{split}
\]
In the same way, by applying Lemma \ref{prop:int} to $D_{x'}u$ with $\beta=\alpha-\beta_0$ and $\alpha_0 = \gamma_1$, we infer that
\[
\begin{split}
& (|\chi^{\alpha -\beta_0} DD_{x'} u  - (\chi^{\alpha -\beta_0} DD_{x'} u)_{Q_{\kappa}^+(z_0), \mu_1}|)_{Q_{\kappa}^+(z_0), \mu_1} \\
& \leq N \kappa^{1/2-\alpha/4}   \left(\fint_{Q_{14}^+(z_0)}|\chi^{\alpha -\beta_0} DD_{x'} u|^{p_0} \mu_1(dz) \right)^{1/p_0}.
\end{split}
\]
The oscillation estimate of $Du$ can be proved in a similar way. Therefore, we obtain \eqref{osc-h}. The proof of the lemma is completed.
\end{proof}

Now, we recall that for a given number $a \in \bR$,
$a_+ = \max\{a, 0\}$.
We derive the oscillation estimates of solutions to the non-homogeneous equation \eqref{eq:xd}, which is the main result of the subsection.
\begin{lemma} \label{oscil-lemma-2} Let $\nu \in (0,1)$, $\alpha \in (0,2)$, $p_0 \in (1, \infty)$, $\beta_0 \in {(\alpha-1},  \min\{1, \alpha\}]$, and $\gamma_1 \in (p_0(\beta_0-\alpha +1) -1, p_0(\beta_0-\alpha+2)-1)$. There exists $N = N(d,\nu, \alpha, \gamma_1, \beta_0, p_0)>0$ such that the following assertions hold. Suppose that  $u \in  \sW^{1,2}_{p_0, \textup{loc}}(\Omega_T, x_d^{\gamma_1'}\, dz)$ is a strong solution of \eqref{eq:xd} with $f \in L_{p_0, \textup{loc}}(\Omega_T, x_d^{\gamma_1'}\, dz)$ and $\gamma_1' = \gamma_1 - p_0(\beta_0-\alpha)$. Then, for every $z_0 \in \overline{\Omega}_T$, $\rho \in (0, \infty)$,  $\kappa \in (0,1)$, we have
\[
\begin{split}
& (|U - (U)_{Q_{\kappa \rho}^+(z_0), \mu_1}|)_{Q_{\kappa \rho}^+(z_0), \mu_1}\\
&\leq N  \kappa^{\theta} (|U|^{p_0})_{Q_{14 \rho}^+(z_0), \mu_1}^{1/p_0} + N \kappa^{-(d + (\gamma_1)_+ +2-\alpha)/p_0}  (|\chi^{\alpha-\beta_0} f|^{p_0})_{Q_{14\rho}^+(z_0), \mu_1}^{1/p_0}
\end{split}
\]
and
\[
\begin{split}
& \lambda^{1/2}(|Du - (Du)_{Q_{\kappa \rho}^+(z_0), \bar{\mu}_1}|)_{Q_{\kappa \rho}^+(z_0),\bar{\mu}_1} \\
& \leq  N  \kappa^{\theta} \lambda^{1/2} (|Du|^{p_0})_{Q_{14 \rho}^+(z_0), \bar{\mu}_1}^{1/p_0} + N \kappa^{-(d + (\gamma_1)_+ +2-\alpha)/p_0}  (|\chi^{\alpha/2} f|^{p_0})_{Q_{14\rho}^+(z_0),  \bar{\mu}_1}^{1/p_0},
\end{split}
\]
where $\theta{\in (0,1)}$ is defined in Lemma \ref{oscil-lemma-1},
\[
U = (\chi^{-\beta_0} u_t,  \chi^{\alpha-\beta_0}DD_{x'} u,  \lambda \chi^{-\beta_0}u),
\]
and $\mu_1, \bar{\mu}_1$ are defined in \eqref{mu-1-def}.
\end{lemma}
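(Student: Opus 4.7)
I will follow the classical Campanato decomposition. Fix $z_0 \in \overline{\Omega}_T$ and $\rho > 0$, and on $Q_{14\rho}^+(z_0)$ write $u = v + w$, where Lemma \ref{l-p-sol-lem} provides the global strong solution $w$ of
\begin{equation*}
\sL_0 w = \mu(x_d) f\, \mathbf{1}_{Q_{14\rho}^+(z_0)} \ \text{in } \Omega_T, \qquad w = 0 \ \text{on } (-\infty,T)\times\partial\bR^d_+,
\end{equation*}
so that $v = u - w$ solves $\sL_0 v = 0$ in $Q_{14\rho}^+(z_0)$ with vanishing trace on the flat part of the boundary (when it is nonempty). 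Lemma \ref{oscil-lemma-1} will then control the oscillation of the smooth piece $v$, while $w$ is treated by absolute size via \eqref{est-0405-1}.

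\textbf{Weight bookkeeping and the bound on $w$.} The key arithmetic observation is that, with $\gamma_1' = \gamma_1 + p_0(\alpha-\beta_0)$, the hypothesis $\gamma_1 \in (p_0(\beta_0-\alpha+1)-1,\,p_0(\beta_0-\alpha+2)-1)$ translates exactly to $\gamma_1' \in (p_0-1,\,2p_0-1)$, the admissible range of Lemma \ref{l-p-sol-lem}. The elementary conversions
\begin{equation*}
\|\chi^{-\beta_0}g\|_{L_{p_0}(\mu_1)} = \|\chi^{-\alpha}g\|_{L_{p_0}(x_d^{\gamma_1'}dz)}, \qquad \|\chi^{\alpha-\beta_0}g\|_{L_{p_0}(\mu_1)} = \|g\|_{L_{p_0}(x_d^{\gamma_1'}dz)},
\end{equation*}
together with their analogues for $\bar{\mu}_1$ (which is also doubling since $\bar{\gamma}_1 = \gamma_1' - p_0\alpha/2 > -1$), then convert \eqref{est-0405-1} applied to $w$ into
\begin{equation*}
\|U_w\|_{L_{p_0}(\Omega_T,\mu_1)} \le N \|\chi^{\alpha-\beta_0}f\,\mathbf{1}_{Q^+_{14\rho}}\|_{L_{p_0}(\mu_1)}, \quad \lambda^{1/2}\|Dw\|_{L_{p_0}(\Omega_T,\bar{\mu}_1)} \le N \|\chi^{\alpha/2}f\,\mathbf{1}_{Q^+_{14\rho}}\|_{L_{p_0}(\bar{\mu}_1)}.
\end{equation*}

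\textbf{Assembly and the $\kappa$-factor.} Lemma \ref{oscil-lemma-1} applied to $v$ bounds the oscillation of $U_v$ on $Q^+_{\kappa\rho}$ by $N\kappa^{\theta}(|U_v|^{p_0})_{Q^+_{14\rho},\mu_1}^{1/p_0}$. Writing $U = U_v + U_w$ and using the triangle inequality leaves two $U_w$-contributions: one at scale $14\rho$, absorbed into $(|U|^{p_0})_{Q^+_{14\rho},\mu_1}^{1/p_0}$ plus the previous bound on $\|U_w\|$, and one of the form $(|U_w|^{p_0})_{Q^+_{\kappa\rho},\mu_1}^{1/p_0}$, which I bound by $\mu_1(Q^+_{\kappa\rho})^{-1/p_0}\|U_w\|_{L_{p_0}(\mu_1)}$. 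The measure ratio is handled by the doubling estimate
\begin{equation*}
\frac{\mu_1(Q^+_{14\rho}(z_0))}{\mu_1(Q^+_{\kappa\rho}(z_0))} \le N\kappa^{-(d+(\gamma_1)_++2-\alpha)},
\end{equation*}
verified by a short case analysis: in the boundary regime $x_{0d} \le \kappa\rho$ the ratio is $\approx (14/\kappa)^{d+2-\alpha+\gamma_1}$, in the interior regime $x_{0d} > 14\rho$ it is $\approx (14/\kappa)^{d+2-\alpha-d\alpha/2}$, and the mixed regime interpolates between the two, all dominated by the stated bound since both $\gamma_1$ and $-d\alpha/2$ are $\le (\gamma_1)_+$. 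An identical argument with $\bar{\mu}_1$ in place of $\mu_1$, using the $Du$-part of Lemma \ref{oscil-lemma-1}, produces the second announced inequality after multiplying by $\lambda^{1/2}$.

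\textbf{Main obstacle.} The principal technical care is bookkeeping: three competing weight exponents --- the native $\chi^{-\alpha}$ from Lemma \ref{l-p-sol-lem}, the target $\chi^{-\beta_0}$ defining $U$, and the underlying $x_d^{\gamma_1}$ in $\mu_1$ (with its variant $\bar{\gamma}_1$ for $Du$) --- must be reconciled so that the auxiliary problem for $w$ falls squarely within the scope of Lemma \ref{l-p-sol-lem} while the resulting bound is exactly in the averaged form required by the conclusion. Once this algebraic alignment and the doubling exponent calculation are in place, the Campanato-style patching is routine.
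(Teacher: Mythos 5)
Your proposal is correct and takes essentially the same route as the paper: decompose $u$ by extracting the strong solution of the nonhomogeneous problem on $Q_{14\rho}^+$ via Lemma \ref{l-p-sol-lem} (the weight check $\gamma_1'=\gamma_1+p_0(\alpha-\beta_0)\in(p_0-1,2p_0-1)$ and the $\mu_1$/$\bar\mu_1$ conversions are exactly as in the paper), apply Lemma \ref{oscil-lemma-1} to the homogeneous residual, and absorb the mismatch in scales via the doubling estimate $\mu_1(Q_{14\rho}^+)/\mu_1(Q_{\kappa\rho}^+)\lesssim\kappa^{-(d+(\gamma_1)_++2-\alpha)}$. The only cosmetic difference is that you swap the names of the two pieces ($v$/$w$).
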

\begin{proof} As $\gamma_1 \in (p_0(\beta_0-\alpha +1)-1, p_0(\beta_0-\alpha+2)-1)$, we see that
\[
\gamma_1' = \gamma_1 - p_0(\beta_0-\alpha) \in (p_0 -1, 2p_0 -1).
\]
Therefore, by Lemma \ref{l-p-sol-lem}, there is a strong solution $v \in \sW^{1,2}_{p_0}(\Omega_T, x_d^{\gamma_1'}\, dz)$ to
\begin{equation} \label{v-sol-1}
\left\{
\begin{array}{cccl}
\sL_0 v & =& f \mathbf{1}_{Q_{14\rho}^+(z_0)} & \quad \text{in} \quad \Omega_T,\\
v & = & 0 & \quad \text{on} \quad \{x_d =0\}
\end{array} \right.
\end{equation}
satisfying
\begin{align} \notag
& \|\chi^{-\alpha} v_t\|_{L_{p_0}(\Omega_T, x_d^{\gamma_1'} dz)}  + \|D^2 v\|_{L_{p_0}(\Omega_T, x_d^{\gamma_1'} dz)}  + \lambda^{1/2} \|\chi^{-\alpha/2} Dv\|_{L_{p_0}(\Omega_T, x_d^{\gamma_1'} dz)} \\ \label{v-sol-est-1}
& \qquad + \lambda \|\chi^{-\alpha} v\|_{L_{p_0}(\Omega_T, x_d^{\gamma_1'} dz)}  \leq N \|f\|_{L_{p_0}(Q_{14\rho}^+(z_0), x_d^{\gamma_1'} dz)}.
\end{align}
Let us denote
\[
  V = (\chi^{-\beta_0} v_t,  \chi^{\alpha-\beta_0}DD_{x'} v, \lambda \chi^{-\beta_0}v).
\]
Then, it follows from \eqref{v-sol-est-1} and the definitions of $\mu_1$ and $\gamma_1'$ that
\begin{equation} \label{V-osc-est-1}
(|V|^{p_0})_{Q_{14\rho}^+(z_0), \mu_1}^{1/p_0} \leq N (|\chi^{\alpha-\beta_0} f|^{p_0})_{Q_{14\rho}^+(z_0), \mu_1}^{1/p_0}.
\end{equation}
Note also that due to \eqref{v-sol-est-1} and the definition of $\bar{\gamma}_1$,
\[
\begin{split}
\lambda^{1/2} \left( \int_{Q_{14\rho}^+(z_0)} | Dv|^{p_0}  x_d^{\bar{\gamma}_1}dz \right)^{1/p_0}  & = \lambda^{1/2} \left( \int_{Q_{14\rho}^+(z_0)} |\chi^{-\alpha/2}Dv|^{p_0} x_d^{\gamma_1'}  dz \right)^{1/p_0} \\
& \leq N \left( \int_{Q_{14\rho}^+(z_0)} |\chi^{ \alpha/2} f|^{p_0} x_d^{\bar{\gamma}_1}dz \right)^{1/p_0}.
 \end{split}
\]
Then,
\begin{equation} \label{os-Dv-18}
 \lambda^{1/2} (|Dv|^{p_0})_{Q_{14\rho}^+(z_0), \bar{\mu}_1}^{1/p_0} \leq N (|\chi^{\alpha/2}f|^{p_0})_{Q_{14\rho}^+(z_0), \bar{\mu}_1}^{1/p_0}.
\end{equation}
Now, let $w = u- v$. From \eqref{v-sol-1}, we see that $w \in \sW^{1,2}_{p_0}(Q_{14\rho}^+(z_0), x_d^{\gamma_1'}\, dz)$ is a strong solution of
\[
\left\{
\begin{array}{cccl}
\sL_0 w & =& 0 & \quad \text{in} \quad Q_{14\rho}^+(z_0),\\
w & = & 0 & \quad \text{on} \quad Q_{14\rho}(z_0) \cap \{x_d =0\}.
\end{array}
\right.
\]
Then, by applying Lemma \ref{oscil-lemma-1} to $w$, we see that
\begin{equation} \label{W-osc-est-1}
(|W - (W)_{Q_{\kappa \rho}^+(z_0), \mu_1}|)_{Q_{\kappa \rho}^+(z_0), \mu_1} \leq N \kappa^{\theta} (|W|^{p_0})_{Q_{14 \rho}^+(z_0), \mu_1}^{1/p_0}
\end{equation}
and
\begin{equation} \label{os-Dw-18}
(|Dw - (Dw)_{Q_{\kappa \rho}^+(z_0), \bar{\mu}_1}|)_{Q_{\kappa \rho}^+(z_0), \bar{\mu}_1} \leq N \kappa^{\theta} (|Dw|^{p_0})_{Q_{14 \rho}^+(z_0), \bar{\mu}_1}^{1/p_0},
\end{equation}
where
\[
\begin{split}
& W= (\chi^{-\beta_0} w_t,  \chi^{\alpha-\beta_0}DD_{x'} w,  \lambda \chi^{-\beta_0}w).
\end{split}
\]
Now, note that from \eqref{def:Q} and \eqref{def:r} we have
\begin{align} \notag
\frac{\mu_1(Q_{14 \rho}^+(z_0))}{\mu_1(Q_{\kappa \rho}^+(z_0))} & = N(d) \kappa^{\alpha-2} \Big(\frac{r(14\rho, x_{0d})}{r(\kappa \rho, x_{0d})}\Big)^{d + (\gamma_1)_+} \\  \label{Q-compared}
& \leq N (d)\kappa^{-(d+ (\gamma_1)_+ + 2-\alpha)}.
\end{align}
Then, it follows from the triangle inequality, H\"{o}lder's inequality, \eqref{W-osc-est-1}, and \eqref{Q-compared}  that
\[
\begin{split}
& (|U - (U)_{Q_{\kappa \rho}^+(z_0), \mu_1}|)_{Q_{\kappa \rho}^+(z_0), \mu_1} \\
& \leq (|W - (W)_{Q_{\kappa \rho}^+(z_0), \mu_1}|)_{Q_{\kappa \rho}^+(z_0), \mu_1} + (|V - (V)_{Q_{\kappa \rho}^+(z_0), \mu_1}|)_{Q_{\kappa \rho}^+(z_0), \mu_1} \\
& \leq (|W - (W)_{Q_{\kappa \rho}^+(z_0), \mu_1}|)_{Q_{\kappa \rho}^+(z_0), \mu_1} \\
& \qquad \quad+ N(d) \kappa^{-(d+ (\gamma_1)_+ +2-\alpha)/p_0} (|V|^{p_0})^{1/p_0}_{Q_{14\rho}^+(z_0), \mu_1}\\
& \leq N  \kappa^{\theta} (|W|^{p_0})_{Q_{14 \rho}^+(z_0), \mu_1}^{1/p_0} + N(d) \kappa^{-(d + (\gamma_1)_+ +2-\alpha)/p_0} (|V|^{p_0})^{1/p_0}_{Q_{14\rho}^+(z_0), \mu_1}.
\end{split}
\]
As $W = U -V$ and $\kappa \in (0,1)$, we apply the triangle inequality again to the term involving $W$ on the right-hand side of the last estimate to see that
\[
\begin{split}
& (|U - (U)_{Q_{\kappa \rho}^+(z_0), \mu_1}|)_{Q_{\kappa \rho}^+(z_0), \mu_1}\\
&\leq N  \kappa^{\theta} (|U|^{p_0})_{Q_{14 \rho}^+(z_0) , \mu_1}^{1/p_0}  + N\big( \kappa^{-(d + (\gamma_1)_+ +2-\alpha)/p_0} + \kappa^{\theta}\big)(|V|^{p_0})_{Q_{14\rho}^+(z_0) , \mu_1}^{1/p_0} \\
& \leq N  \kappa^{\theta} (|U|^{p_0})_{Q_{14 \rho}^+(z_0), \mu_1}^{1/p_0} + N \kappa^{-(d + (\gamma_1)_+ +2-\alpha)/p_0}  (|V|^{p_0})_{Q_{14\rho}^+(z_0), \mu_1}^{1/p_0}.
\end{split}
\]
From this and \eqref{V-osc-est-1}, it follows that
\[
\begin{split}
& (|U - (U)_{Q_{\kappa \rho}^+(z_0) , \mu_1}|)_{Q_{\kappa \rho}^+(z_0), \mu_1}\\
& \leq N  \kappa^{\theta} (|U|^{p_0})_{Q_{14 \rho}^+(z_0) , \mu_1}^{1/p_0} + N \kappa^{-(d + (\gamma_1)_++2-\alpha)/p_0}  (|\chi^{\alpha-\beta_0} f|^{p_0})_{Q_{14\rho}^+(z_0) , \mu_1}^{1/p_0},
\end{split}
\]
where $N = N(d,\nu, \alpha, \gamma_1, \beta_0, p_0)>0$. This proves the assertion on the oscillation of $U$. The oscillation estimate of $Du$ can be proved similarly using \eqref{os-Dv-18} and \eqref{os-Dw-18}. The proof of the lemma is completed.
\end{proof}

We now conclude this subsection by pointing out the following important remark, whose proof can be achieved by following that of Lemma \ref{oscil-lemma-2} with minor modifications.
\begin{remark} \label{all-oscilla-est} Under the assumptions as in Lemma \ref{oscil-lemma-2}, and  if $\beta_0 \in {(\alpha-1}, \alpha/2]$, it holds that
\[
\begin{split}
& (|U' - (U')_{Q_{\kappa \rho}^+(z_0), \mu_1}|)_{Q_{\kappa \rho}^+(z_0), \mu_1}\\
&\leq N  \kappa^{\theta} (|U'|^{p_0})_{Q_{14 \rho}^+(z_0), \mu_1}^{1/p_0} + N \kappa^{-(d + (\gamma_1)_+ +2-\alpha)/p_0}  (|\chi^{\alpha-\beta_0} f|^{p_0})_{Q_{14\rho}^+(z_0), \mu_1}^{1/p_0}
\end{split}
\]
where
\[
U' = (\chi^{-\beta_0} u_t,  \chi^{\alpha-\beta_0}DD_{x'} u,  \lambda^{1/2}\chi^{\alpha/2-\beta_0} Du, \lambda \chi^{-\beta_0}u).
\]
\end{remark}
%\begin{proof} The proof follows exactly as that of Lemma \ref{oscil-lemma-2}.
%\end{proof}
\subsection{Proof of Theorem \ref{thm:xd}} \label{proof-xd} We are now ready to give the proof of Theorem \ref{thm:xd}.
\begin{proof}[Proof of Theorem \ref{thm:xd}] We begin with the proof of the a priori estimates \eqref{eq:xd-main}--\eqref{eq3.09} assuming that $u \in \sW^{1,2}_p(\Omega_T, x_d^\gamma\, dz)$ is a strong solution to the equation \eqref{eq:xd} with
\begin{equation} \label{gamma-alla-range}
\gamma \in (p (\alpha-1)_{+} -1, 2p-1), \quad \text{where} \,\, (\alpha -1)_+ = \max\{\alpha-1, 0\}.
\end{equation}
In our initial step, we prove \eqref{eq:xd-main}--\eqref{eq3.09} with an extra assumption that $u$ is compactly supported.  We first prove \eqref{eq:xd-main}. Let $\beta_0 = \min\{1, \alpha\}$, and we will apply Lemma \ref{oscil-lemma-2} with this $\beta_0$. Let $p_0 \in (1, p)$ and $\gamma_1 \in (p_0(\beta_0-\alpha +1) -1, p_0(\beta_0-\alpha+2)-1)$. We choose $p_0$ to be sufficiently close to $1$ and $\gamma_1$ to be sufficiently close to $p_0(\beta_0-\alpha+2)-1$ so that
\begin{equation} \label{nature-choice-2}
\gamma - [\gamma_1 +p(\alpha-\beta_0)]  < (1+\gamma_1)(p/p_0 -1).
\end{equation}
We note that this is possible  because $\alpha-\beta_0 = (\alpha-1)_+$ and
\[
\gamma - [ \gamma_1 + p(\alpha-\beta_0)] < p[2  - (\alpha-1)_+] -1 -\gamma_1,
\]
and also from our choices of $p_0$ and $\gamma_1$,
\[
\begin{split}
(1+\gamma_1)(p/p_0 -1) & \sim p (1+\gamma_1) -1 -\gamma_1 \sim p[2  - (\alpha-1)_+] -1 -\gamma_1.
\end{split}
\]
Now, let us denote
\begin{equation} \label{gamma-1-pri}
\gamma_1' : = \gamma_1 + p(\alpha-\beta_0) = \gamma_1 + p (\alpha-1)_+.
\end{equation}
Due to \eqref{gamma-alla-range} and the definition of $\gamma_1'$,  it  follows that
\begin{equation} \label{nature-choice-1}
\gamma - \gamma_1'  = \gamma - p(\alpha-1)_+ - \gamma_1 > -1 - \gamma_1.
\end{equation}
From \eqref{nature-choice-1} and \eqref{nature-choice-2}, it holds that
\begin{equation} \label{gamma-0414}
\gamma' : =  \gamma -  \gamma_1' \in (-1-\gamma_1, (1+\gamma_1)(p/p_0 -1)).
\end{equation}

\smallskip
Now, since $u$ has compactly support in $\Omega_T$, we have $u \in \sW^{1,2}_p(\Omega_T, x_d^{\gamma_1'} dz)$. Therefore, it follows from Lemma \ref{oscil-lemma-2} that
\[
U^{\#}_{\mu_1} \leq N \Big[ \kappa^{\theta} \cM_{\mu_1}(|U|^{p_0}) ^{1/p_0} +   \kappa^{-(d+ (\gamma_1)_+ + 2-\alpha)/2}  \cM_{\mu_1}(|\chi^{\alpha-\beta_0} f|^{p_0})^{1/p_0} \Big],
\]
where $\mu_1(dz) = x_d^{\gamma_1}dxdt$, the sharp function and the maximal function with respect to the measure $\mu_1$ are defined as in Subsection \ref{sharp-function-sec}, and
\[
U = (\chi^{-\beta_0} u_t,  \chi^{\alpha-\beta_0}DD_{x'} u,  \lambda \chi^{-\beta_0}u).
\]
Next, due to \eqref{gamma-0414}, $x_d^{\gamma'} \in A_{p/p_0}(\mu_1)$.  It then follows from the weighted Fefferman-Stein theorem and Hardy-Littlewood theorem (i.e., Theorem \ref{FS-thm}) that
\begin{align} \notag
&  \|U\|_{L_p(\Omega_T, x_d^{\gamma'}\, d\mu_1)} \leq N \|U^{\#}_{\mu_1}\|_{L_p(\Omega_T, x_d^{\gamma'}\,d\mu_1)} \\ \notag
& \leq N \Big[\kappa^{\theta}\|\cM_{\mu_1}(|U|^{p_0})^{1/p_0}\|_{L_p(\Omega_T, x_d^{\gamma'}\,d\mu_1 )}  \\ \notag
& \qquad +   \kappa^{-(d + (\gamma_1)_++2-\alpha)/2}  \|\cM_{\mu_1}(|\chi^{\alpha-\beta_0} f|^{p_0})^{1/p_0}\|_{L_p(\Omega_T, x_d^{\gamma'}\, d\mu_1)} \Big] \\ \label{est:0414-1}
& \leq N \Big[ \kappa^{\theta} \|U\|_{{L_p(\Omega_T, x_d^{\gamma'}\,d\mu_1)}} + \kappa^{-(d+ (\gamma_1)_++2-\alpha)/2}  \| \chi^{\alpha-\beta_0} f\|_{{L_p(\Omega_T, x_d^{\gamma'}\,d\mu_1)}} \Big].
\end{align}
From the definition of $U$, the choices of $\gamma'$ in \eqref{gamma-0414} and $\gamma_1'$ in \eqref{gamma-1-pri}, we have
\[
\begin{split}
\|U\|_{{L_p(\Omega_T, x_d^{\gamma'}d\mu_1)}} & = \|\chi^{-\alpha} u_t\|_{L_p(\Omega_T, x_d^\gamma\, dz)} + \|DD_{x'}u\|_{L_p(\Omega_T, x_d^\gamma dz)} \\
& \quad + \lambda  \|\chi^{-\alpha} u \|_{L_p(\Omega_T, x_d^\gamma\, dz)}
<\infty.
\end{split}
\]
Then, by choosing $\kappa \in (0,1)$ sufficiently small so that $N \kappa^{\theta} < 1/2$, we obtain from \eqref{est:0414-1} that
\[
\begin{split}
& \|\chi^{-\alpha} u_t\|_{L_p(\Omega_T, x_d^\gamma\, dz)} + \|DD_{x'}u\|_{L_p(\Omega_T, x_d^\gamma\, dz)} + \lambda  \|\chi^{-\alpha} u \|_{L_p(\Omega_T, x_d^\gamma\, dz)}  \\
&  \leq N \| f\|_{{L_p(\Omega_T, x_d^{\gamma'}\,d\mu_1)}} = N \| f\|_{{L_p(\Omega_T, x_d^{\gamma}\,dz)}}.
\end{split}
\]
Also, from the PDE in \eqref{eq:xd}, we see that
\[
|D_{dd} u| \leq N[|DD_{x'}u| + (|u_t| + \lambda |u|)x_d^{-\alpha} + |f|],
\]
and therefore
\[
\begin{split}
& \|\chi^{-\alpha}u_t\|_{L_p(\Omega_T, x_d^{\gamma}\,dz)}+\lambda\|\chi^{-\alpha}u\|_{L_p(\Omega_T, x_d^{\gamma}\,dz)}
+\|D^2 u\|_{L_p(\Omega_T, x_d^{\gamma}\,dz)}\\
&\le N\|f\|_{L_p(\Omega_T,  x_d^{\gamma}\,dz)},
\end{split}
\]
which is \eqref{eq:xd-main}.

\smallskip
Next, we prove the estimate \eqref{eq3.09} also with the extra assumption that $u$ has compact support. We observe that if $\gamma \in (p -1, 2p -1)$,  \eqref{eq3.09} follows from \eqref{est-0405-1}. Therefore, it remains to consider the case that  $\gamma \in (\alpha p/2-1, p  -1]$ or equivalently
\begin{equation} \label{gamma-range-2}
\gamma - \alpha p/2 \in (-1, p(1-\alpha/2) -1].
\end{equation}
The main idea is to apply  Lemma \ref{oscil-lemma-2} with this $\beta_0 = \alpha/2$. Let $p_0, \gamma_1$ be as before but with the new choice of $\beta_0$. As noted in \eqref{mu1=bar-mu-1}, we have
\[ \bar{\gamma}_1 = \gamma_1 - p_0(\beta_0 -\alpha/2) = \gamma_1 \qquad \text{and} \qquad  \bar{\mu}_1 = \mu_1.
\]
Because of \eqref{gamma-range-2},  we can perform the same calculation  as the one that yields \eqref{gamma-0414} to obtain
\[
\bar{\gamma}' := \gamma - (\bar{\gamma}_1 + p\alpha/2 ) \in (-1 - \bar{\gamma}_1, (1+\bar{\gamma}_1)(p/p_0 -1))
\]
and therefore $x_d^{\bar{\gamma}'} \in A_{p/p_0}(\bar{\mu}_1)$.  By using Lemma \ref{oscil-lemma-2} , we have
\begin{equation} \label{Du-sharp}
\begin{split}
 \lambda^{1/2} (Du)^{\#}_{\bar{\mu}_1} & \leq N \Big[ \kappa^{\theta}  \lambda^{1/2}\cM_{\bar{\mu}_1}(|Du|^{p_0}) ^{1/p_0} \\
& \quad +   \kappa^{-(d+ \bar{\gamma}_1 + 2-\alpha)/2}  \cM_{\bar{\mu}_1}(|\chi^{\alpha/2} f|^{p_0})^{1/p_0} \Big],
\end{split}
\end{equation}
where $\bar{\mu}_1(dz) = x_d^{\bar{\gamma}_1}dxdt$.  We apply Theorem \ref{FS-thm} to \eqref{Du-sharp}, and  then choose $\kappa>0$ sufficiently small as in the proof of \eqref{eq:xd-main} to obtain
\[
\lambda^{1/2}\|Du\|_{L_p(\Omega_T, x_d^{\bar{\gamma}'} d\bar{\mu}_1)} \leq N \|\chi^{\alpha/2} f\|_{L_p(\Omega_T, x_d^{\bar{\gamma}'}\, d\bar{\mu}_1)}.
\]
This implies
\[
\lambda^{1/2} \|\chi^{-\alpha/2}Du\|_{L_p(\Omega_T, x_d^{\gamma}\, dz)} \leq N \| f\|_{L_p(\Omega_T, x_d^{\gamma}\,  dz)}
\]
as $\gamma -  p\alpha/2= \bar{\gamma}' + \bar{\gamma}_1$. The estimate  \eqref{eq3.09} is proved.

\smallskip
Now, we prove  \eqref{eq:xd-main}--\eqref{eq3.09} without the assumption that $u$ is compactly supported. As $u \in \sW^{1,2}_p(\Omega_T, x_d^\gamma dz)$, there is a sequence $\{u_n\}$ in $C_0^\infty(\Omega_T)$ such that
\begin{equation} \label{u-approx-compact}
\lim_{n\rightarrow \infty} \|u_n -u\|_{\sW^{1,2}_p(\Omega_T, x_d^\gamma\,  dz)} =0.
\end{equation}
Let $f_n =  f + \sL_0 (u_n - u)/\mu(x_d)$ and observe that $u_n$ is a strong solution of
\[
\sL_0 u_n = \mu(x_d) f_n \quad \text{in} \quad \Omega_T \quad \text{and} \quad u_n =0 \quad \text{on} \quad \{x_d =0\}.
\]
Then, applying the estimates \eqref{eq:xd-main}--\eqref{eq3.09} to $u_n$, we obtain
\begin{equation} \label{un-supported}
\|u_n\|_{\sW^{1,2}_p(\Omega_T, x_d^\gamma\, dz)} \leq N\|f_n\|_{L_p(\Omega_T, x_d^\gamma\, dz)}.
\end{equation}
Note that
\[
\begin{split}
& \|f_n\|_{L_p(\Omega_T, x_d^\gamma\, dz)}  \leq \|f\|_{L_p(\Omega_T, x_d^\gamma\, dz)} + N\lambda \|\chi^{-\alpha} (u-u_n)\|_{L_p(\Omega_T, x_d^\gamma\, dz)}\\
& \qquad + N \Big [ \|D^2(u-u_n)\|_{L_p(\Omega_T, x_d^\gamma\, dz)}  + \|\chi^{-\alpha}(u-u_n)_t\|_{L_p(\Omega_T, x_d^\gamma\, dz)}  \| \Big] \\
& \rightarrow \|f\|_{L_p(\Omega_T, x_d^\gamma\, dz)} \quad \text{as} \quad n \rightarrow \infty.
\end{split}
\]
Therefore, by taking $n\rightarrow \infty$ in \eqref{un-supported} and using \eqref{u-approx-compact},  we obtain the estimates \eqref{eq:xd-main}--\eqref{eq3.09} for $u$. Hence, the proof of \eqref{eq:xd-main}--\eqref{eq3.09} is completed.

\smallskip
It remains to prove the existence of a strong solution $u \in \sW^{1,2}_p(\Omega_T, x_d^{\gamma}\, dz)$ to  \eqref{eq:xd} assuming that $f \in L_p(\Omega_T, x_d^\gamma\, dz)$, for $p \in (1, \infty)$ and $\gamma \in (p (\alpha-1)_{+} -1, 2p-1)$. We observe when $\gamma \in (p-1, 2p-1)$, the existence of solution is already proved in Lemma \ref{l-p-sol-lem}.  Therefore, it remains to consider the case when
$$\gamma \in (p (\alpha-1)_{+} -1, p-1].$$
We consider two cases.

\smallskip
\noindent
{\em Case} 1.
Consider $\gamma \in (p(\alpha-1)_+ -1, p-1)$.
As $f \in L_p(\Omega_T, x_d^\gamma\, dz)$, there is a sequence $\{f_k\}_k \subset C_0^\infty(\Omega_T)$ such that
\begin{equation} \label{fk-approx}
\lim_{k\rightarrow \infty}\|f_k - f\|_{L_p(\Omega_T, x_d^\gamma\, dz)} =0.
\end{equation}
For each $k \in \bN$, because $f_k$ has compact support, we see that
\[  x_d^{1-\alpha} \mu(x_d) f_k \sim x_d f_k \in L_p(\Omega_T, x_d^{\gamma}\, dz). \]
Then, as in the proof of Lemma \ref{l-p-sol-lem}, we apply \cite[Theorem 2.4]{DPT21} to find a weak solution $u_k \in \cH^1_p(\Omega_T, x_d^{\gamma}\, dz)$ to the divergence form equation \eqref{eq:xd-div} with $f_k$ in place of $f$. Moreover,
\begin{equation} \label{uk-Hp-est}
\|Du_k\|_{L_p(\Omega_T, x_d^\gamma\, dz)} +  \|\chi^{-\alpha/2}u_k\|_{L_p(\Omega_T, x_d^\gamma\, dz)} < \infty.
\end{equation}
We claim that $u_k \in \sW^{1,2}_p(\Omega_T, x_d^{\gamma}\, dz)$ for each $k \in \mathbb{N}$.  Note that if the claim holds, we can apply the a priori estimate that we just proved for the equations of $u_k$ and of $u_k - u_l$ to get
\[
\begin{split}
& \|u_k\|_{\sW^{1,2}_p(\Omega_T, x_d^{\gamma}\, dz)} \leq N \|f_k\|_{L_p(\Omega_T, x_d^{\gamma}\, dz)} \quad \text{and} \\
& \|u_k - u_l\|_{\sW^{1,2}_p(\Omega_T, x_d^{\gamma}\, dz)} \leq N \|f_k - f_l\|_{L_p(\Omega_T, x_d^{\gamma}\, dz)}
\end{split}
\]
for any $k, l \in \mathbb{N}$, where $N = N(d, \nu, \gamma, \alpha, p)>0$ which is independent of $k, l$. The last estimate and \eqref{fk-approx} then imply that the sequence $\{u_k\}_k$ is convergent in $\sW^{1,2}_p(\Omega_T, x_d^{\gamma}\, dz)$. Let $u \in \sW^{1,2}_p(\Omega_T, x_d^{\gamma}\, dz)$ be the limit of such sequence, we see that $u$ solves \eqref{eq:xd}.

\smallskip
Hence, in this case, it remains to prove the claim that $u_k \in \sW^{1,2}_p(\Omega_T, x_d^{\gamma}\, dz)$ for every $k \in \mathbb{N}$.  Also, let us fix $k \in \mathbb{N}$, and let us denote $\Omega_T' = (-\infty, T) \times \bR^{d-1}$.  Let $0 < r_0 <R_0$ such that
\begin{equation} \label{fk-support}
\text{supp}(f_k) \subset {\Omega_T'} \times (r_0, R_0).
\end{equation}
Without loss of generality, we assume that $
r_0 =2$.
From \eqref{uk-Hp-est}, it follows directly that
\[
\begin{split}
& \|Du_k\|_{L_p(\Omega_T' \times (1, \infty), x_d^{\gamma -p}\, dz)} +   \|u_k\|_{L_p(\Omega_T' \times (1, \infty), x_d^{\gamma-2p}\, dz)} \\
& \qquad +   \|\chi^{-\alpha/2}u_k\|_{L_p(\Omega_T' \times (1,\infty), x_d^{\gamma-p}\, dz)} <\infty.
\end{split}
\]
Then, we can follow the proof of Lemma \ref{l-p-sol-lem} to show that
\[
\|u_k\|_{\sW^{1,2}_{p}(\Omega_T' \times (1,\infty), x_d^{\gamma}\, dz)} <\infty.
\]
It now remains to prove that $u_k \in \sW^{1,2}_{p}({\Omega_T'\times (0,1)}, x_d^{\gamma}\, dz)$ and
\begin{equation} \label{near-est-uk}
\|u_k\|_{\sW^{1,2}_{p}(\Omega_T' \times (0, 1), x_d^{\gamma}\, dz)} < \infty.
\end{equation}
To this end, because of \eqref{fk-support}, we note that $u_k$   solves the homogeneous equation
\begin{equation} \label{uk-ne-zero}
\sL_0 u_k =0 \quad \text{in} \quad  \Omega_T' \times (0, 2)
\end{equation}
with the boundary condition $u_k =0$ on $\{x_d =0\}$.  Let us denote
\[
\begin{split}
& C_r = [-1, 0) \times \big\{ x = (x_1, \ldots, x_d) \times \bR^{d}_+ : {\max_{1 \leq i \leq d}|x_i|}<r\big\}, \\
& C_r(t,x) = C_r + (t,x), \quad r >0.
\end{split}
\]
Consider $\alpha \in (0, 1)$. By using Lemmas \ref{caccio}, and \ref{prop:boundary} with a scaling argument and translation, we obtain
\begin{equation*}
\begin{split}
& \|\chi^{-\alpha} u_k\|_{L_\infty(C_{1}(z_0))} + \|Du_k\|_{L_\infty(C_{1}(z_0))}  + \|\chi^{-\alpha} \partial_t u_k\|_{L_\infty(C_{1}(z_0))}  \\
& \quad + \|DD_{x'} u_k\|_{L_\infty(C_{1}(z_0))} \leq N \Big[\|Du_k\|_{L_{p}(C_{2}(z_0), x_d^\gamma\, dz)} + \|\chi^{-\alpha/2} u_k\|_{L_{p}(C_{2}(z_0), x_d^\gamma dz)} \Big]
\end{split}
\end{equation*}
for every $z_0 = (t_0, x_0', 0) \in \Omega_T' \times\{0\}$. Note that $N$ depends on $k$, but is independent of $z_0$. This and the PDE in \eqref{uk-ne-zero} imply that
\[
\begin{split}
& \|\chi^{-\alpha} u_k\|_{L_\infty(C_{1}(z_0))} + \|Du_k\|_{L_\infty(C_{1}(z_0))} + \|\chi^{-\alpha} \partial_t u_k\|_{L_\infty(C_{1}(z_0))}   \\
& \quad + \|D^2 u_k\|_{L_\infty(C_{1}(z_0))}  \leq  N \Big[\|Du_k\|_{L_{p}(C_{2}(z_0), x_d^\gamma\, dz)} + \|\chi^{-\alpha/2} u_k\|_{L_{p}(C_{2}(z_0), x_d^\gamma\, dz)} \Big].
\end{split}
\]
Then,  as $\gamma > -1$, we see that
\[
\begin{split}
& \|\chi^{-\alpha}u_k\|_{L_p(C_1(z_0), x_d^{\gamma}\, dz)}
+ \|Du_k\|_{L_p(C_1(z_0), x_d^{\gamma}\, dz)}
+ \|\chi^{-\alpha}\partial_t u_k\|_{L_p(C_1(z_0), x_d^{\gamma}\, dz)}  \\
& + \|\ D^2u_k\|_{L_p(C_1(z_0), x_d^{\gamma}\, dz)} \leq N \Big[\|Du_k\|_{L_{p}(C_{2}(z_0), x_d^\gamma\, dz)} + \|\chi^{-\alpha/2} u_k\|_{L_{p}(C_{2}(z_0), x_d^\gamma\, dz)} \Big].
\end{split}
\]
Then, with $z_0 = (t_0, x_0', 0)$ and with $\mathcal{I} = ((\mathbb{Z}+T) \cap (-\infty, T{]}) \times (2\mathbb{Z})^{d-1}$, we have
\[
\begin{split}
\|u_k\|^p_{\sW^{1,2}_p(\Omega_T' \times (0,1))} & = \sum_{(t_0, x_0') \in \mathcal{I} } \|u_k\|^p_{\sW^{1,2}_p(C_1(z_0))} \\
& \leq N \sum_{(t_0, x_0') \in \mathcal{I} }\Big[\|Du_k\|^p_{L_{p}(C_{2}(z_0))} + \|\chi^{-\alpha/2} u_k\|^p_{L_{p}(C_{2}(z_0))} \Big] \\
& = N\Big[ \|Du_k\|^p_{L_{p}(\Omega_T, x_d^\gamma\, dz)} + \|\chi^{-\alpha/2} u_k\|^p_{L_{p}(\Omega_T, x_d^\gamma\, dz)}\Big] <\infty.
\end{split}
\]
Hence, \eqref{near-est-uk} holds.

\smallskip
Now, we consider $\alpha \in [1, 2)$.
As $\gamma + p (1-\alpha) >-1$, we see that
\[
\begin{split}
& \int_{C_1(z_0)} |x_d^{-\alpha} u_k(z)|^p x_d^\gamma dz  = \int_{C_1(z_0)} |x_d^{-1}u_k(z)|^p x_d^{\gamma + p (1-\alpha)} dz \\
& \leq N \|Du_k\|^p_{L_\infty(C_1(z_0))} \\
& \leq N\Big[ \|Du_k\|^p_{L_{p}(C_2(z_0), x_d^\gamma\, dz)} + \|\chi^{-\alpha/2} u_k\|^p_{L_{p}(C_2(z_0), x_d^\gamma\, dz)}\Big].
\end{split}
\]
Then, by taking the sum of this inequality for $(t_0, x_0') \in \mathcal{I}$, we also obtain
\[
\|\chi^{-\alpha} u_k\|_{L_p(\Omega_T' \times (0,1), x_d^\gamma\, dz)} \leq N \Big[ \|Du_k\|_{L_p(\Omega_T, x_d^\gamma\, dz)} + \|\chi^{-\alpha}u_k\|_{L_p(\Omega_T, x_d^\gamma\, dz)} \Big].
\]
Similarly, we also have $\chi^{-\alpha} (u_k)_t, Du_k\in L_p(\Omega_T' \times (0,1), x_d^\gamma\,dz)$.
By using the different quotient, we also get $DD_{x'} u_k \in L_p(\Omega_T' \times (0,1), x_d^\gamma\,dz)$.
From this, and the PDE of $u_k$, we have $D^2u_k \in L_p(\Omega_T' \times (0,1), x_d^\gamma\, dz)$.
Therefore, \eqref{near-est-uk} holds. The proof of the claim in this case is completed.

\smallskip \noindent
{\em Case} 2.
We consider $\gamma =p-1$. Let $\{f_k\}_k$ be as in \eqref{fk-approx} and let $\bar{\gamma} \in (p(\alpha-1)_+ -1, p-1)$.  As in {\em Case \textup{1}}, we can find a weak solution $u_k \in \cH^1_p(\Omega_T, x_d^{\bar{\gamma}}\, dz)$ to the divergence form equation \eqref{eq:xd-div} with $f_k$ in place of $f$, and
\begin{equation} \label{uk-Hp-est-b}
\|Du_k\|_{L_p(\Omega_T, x_d^{\bar{\gamma}}\, dz)} +  \|\chi^{-\alpha/2}u_k\|_{L_p(\Omega_T, x_d^{\bar{\gamma}}\, dz)} < \infty.
\end{equation}
We claim that for each $k \in \mathbb{N}$,
\begin{equation} \label{uk-Hp-est-b-1}
\|Du_k\|_{L_p(\Omega_T, x_d^{\gamma}\, dz)} +  \|\chi^{-\alpha/2}u_k\|_{L_p(\Omega_T, x_d^\gamma\, dz)} < \infty.
\end{equation}
Once this claim is proved, we can follow the proof in {\em Case \textup{1}} to obtain the existence of a solution $u \in \sW^{1,2}_p(\Omega_T, x_d^\gamma\, dz)$. Therefore, we only need to prove \eqref{uk-Hp-est-b-1}.

Let us fix $k \in \mathbb{N}$ and let $0 < r_0 < R_0$ such that \eqref{fk-support} holds. As $\bar{\gamma} < \gamma$, we see that
\[
\begin{split}
& \|Du_k\|_{L_p(\Omega_T' \times (0, 2R_0), x_d^{\gamma}\, dz)} +  \|\chi^{-\alpha/2}u_k\|_{L_p(\Omega_T' \times (0, 2R_0), x_d^\gamma\, dz)} \\
& \leq N\Big[ \|Du_k\|_{L_p(\Omega_T' \times (0, 2R_0), x_d^{\bar\gamma}\, dz)} +  \|\chi^{-\alpha/2}u_k\|_{L_p(\Omega_T' \times (0, 2R_0), x_d^{\bar\gamma}\, dz)}\Big] <\infty
\end{split}
\]
due to \eqref{uk-Hp-est-b}. Hence, it remains to prove
\begin{equation} \label{uk-Hp-est-b-2}
\|Du_k\|_{L_p(\Omega_T' \times (2R_0, \infty), x_d^{\gamma}\, dz)} +  \|\chi^{-\alpha/2}u_k\|_{L_p(\Omega_T' \times (2R_0, \infty), x_d^\gamma\, dz)} < \infty.
\end{equation}
To prove \eqref{uk-Hp-est-b-2}, we use the localization technique along the $x_d$ variable.
See \cite[Proof of Theorem 4.5, Case II]{DP-JFA}. We skip the details.
\end{proof}

\section{Equations with partially VMO coefficients} \label{sec:4}
We study \eqref{eq:main} in this section.
Precisely, we consider the equation
\begin{equation}\label{eq:main-1}
\begin{cases}
\sL u=\mu(x_d) f \quad &\text{ in } \Omega_T,\\
u=0 \quad &\text{ on } (-\infty, T) \times \partial \bR^d_+,
\end{cases}
\end{equation}
where $\sL$ is defined in \eqref{L-def} in which the coefficients $a_0$, $c_0$, and $a_{ij}$ are measurable functions depending on $z = (z', x_d) \in \Omega_T$.  We employ the perturbation method by freezing the coefficients. For $z_0 = (z'_0, x_{0d}) \in \overline{\Omega}_T$, let $[{a}_{ij}]_{Q_{\rho}'(z'_0)}, [a_{0}]_{Q_{\rho}'(z'_0)}$, and $[c_{0}]_{Q_{\rho}'(z'_0)}$ be functions defined in Assumption \ref{assumption:osc} $(\delta, \gamma_1, \rho_0)$, and we denote
\begin{equation} \label{a-sharp-def}
\begin{split}
a^{\#}_{\rho_0}(z_0) & =\sup_{\rho\in(0,\rho_0)}\left[ \max_{i,j=1, 2,\ldots, d}\fint_{Q_{\rho}^+(z_0)}|a_{ij}(z) -[{a}_{ij}]_{Q_{\rho}'(z'_0)}(x_d)| \mu_1(dz)  \right. \\
& \qquad +   \fint_{Q_{\rho}^+(z_0)}|a_{0}(z) -[{a}_{0}]_{Q_{\rho}'(z'_0)}(x_d)| \mu_1(dz) \\
& \qquad \left. +  \fint_{Q_{\rho}^+(z_0)}|c_{0}(z) -[{c}_{0}]_{Q_{\rho}'(z'_0)}(x_d)| \mu_1(dz) \right].
\end{split}
\end{equation}
For the reader's convenience, recall that $\mu_1, \bar{\mu}_1$ are defined in \eqref{mu-1-def}. We also recall that for a given $u$, we denote
\[
U = (\chi^{-\beta_0} u_t,  \chi^{\alpha-\beta_0}DD_{x'} u,  \lambda \chi^{-\beta_0}u).
\]
We also denote
\[
U' = (\chi^{-\beta_0} u_t,  \chi^{\alpha-\beta_0}DD_{x'} u, \lambda^{1/2} \chi^{\alpha/2-\beta_0}Du, \lambda \chi^{-\beta_0}u).
\]
We begin with the following oscillation estimates for solutions to \eqref{eq:main-1} that have small supports in the time-variable.
\begin{lemma} \label{osc-est-small} Let $\nu, \rho_0 \in (0,1)$, $p_0 \in (1, \infty)$,  $\alpha \in (0,2)$, $\beta_0 \in {(\alpha-1}, \min\{1, \alpha\}]$, $\gamma_1 \in (p_0(\beta_0-\alpha +1) -1, p_0(\beta_0-\alpha +2)-1)$, and $\gamma_1' = \gamma_1-p_0(\beta_0-\alpha)\in (p_0-1,2p_0-1)$. Assume that $u \in \sW^{1,2}_{p}(Q_{6\rho}^+(z_0), x_d^{\gamma_1'}dz)$ is a strong solution of
\[
\left\{
\begin{array}{cccl}
\sL u & = & \mu(x_d) f & \quad \text{in} \quad Q_{6\rho}^+(z_0),\\
u & = & 0 & \quad \text{on} \quad  Q_{6\rho}(z_0) \cap \{x_d =0\}
\end{array} \right.
\]
for $f \in L_{p_0}(Q_{6\rho}^+(z_0), x_d^{{\gamma_1'}}dz)$, and $\textup{supp}(u) \subset (t_1 -(\rho_0 \rho_1)^{2-\alpha}, t_1 +(\rho_0 \rho_1)^{2-\alpha})$ for some $t_1 \in \bR$ and $\rho_0 >0$. Then,
\begin{align} \notag
&\big (|U - (U)_{Q_{\kappa\rho}^+(z_0), \mu_1}|\big)_{Q_{\kappa\rho}^+(z_0), \mu_1} \\ \notag
& \leq  N \Big[\kappa^{\theta}  + \kappa^{-(d + (\gamma_1)_+ +2-\alpha)/p_0} \big( a_{\rho_0}^{\#}(z_0)^{\frac{1}{p_0} - \frac{1}{p}} + \rho_1^{(2-\alpha)(1-1/p_0)}\big) \Big]   (|U|^{p})_{Q_{14 \rho}^+(z_0), \mu_1}^{1/p}  \\ \label{U-osc-gen}
& \qquad + N \kappa^{-(d + (\gamma_1)_+ +2-\alpha)/p_0} (|\chi^{\alpha-\beta_0} f|^{p_0})_{Q_{14\rho}^+(z_0), \mu_1}^{1/p_0},
\end{align}
where $\theta>0$ is defined in Lemma \ref{oscil-lemma-1},   $p\in (p_0,\infty)$, and $N = N(p, p_0, \gamma_1, \alpha, \beta_0, d, \nu)>0$. In addition, if $\beta_0 \in {(\alpha-1}, \alpha/2]$, we also have
\begin{align} \notag
&\big (|U' - (U')_{Q_{\kappa\rho}^+(z_0), \mu_1}|\big)_{Q_{\kappa\rho}^+(z_0), \mu_1} \\ \notag
& \leq  N \Big[\kappa^{\theta}  + \kappa^{-(d + (\gamma_1)_+ +2-\alpha)/p_0} \big( a_{\rho_0}^{\#}(z_0)^{\frac{1}{p_0} - \frac{1}{p}} + \rho_1^{(2-\alpha)(1-1/p_0)}\big) \Big]   (|U'|^{p})_{Q_{14 \rho}^+(z_0), \mu_1}^{1/p}  \\ \label{Uall-osc-gen}
& \qquad + N \kappa^{-(d + (\gamma_1)_+ +2-\alpha)/p_0} (|\chi^{\alpha-\beta_0} f|^{p_0})_{Q_{14\rho}^+(z_0), \mu_1}^{1/p_0}.
\end{align}
 \end{lemma}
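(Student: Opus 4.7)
The argument follows the perturbation scheme used in Lemma~\ref{oscil-lemma-2}, with the extra step of freezing the $z'$-dependence of the coefficients so that the results of Section~\ref{sec:3} apply. Set $\bar a_{ij}(x_d):=[a_{ij}]_{Q_{6\rho}'(z_0')}(x_d)$ and similarly $\bar a_0,\bar c_0$, and let $\sL_0$ be the operator \eqref{L0-def} built from these. In $Q_{6\rho}^+(z_0)$ the equation $\sL u=\mu(x_d)f$ becomes $\sL_0 u=\mu(x_d)(f+h)$, where
\[
h=(a_{ij}-\bar a_{ij})D_iD_j u-\mu(x_d)^{-1}(a_0-\bar a_0)u_t-\lambda\mu(x_d)^{-1}(c_0-\bar c_0)u.
\]
Taking $\bar a_{dd}\equiv 1$ (consistent with \eqref{add-assumption}) kills the $(d,d)$ term, so $|\chi^{\alpha-\beta_0}h|\le N\omega|U|$ pointwise with $\omega:=\sum_{i,j}|a_{ij}-\bar a_{ij}|+|a_0-\bar a_0|+|c_0-\bar c_0|$. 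Since $\gamma_1'=\gamma_1+p_0(\alpha-\beta_0)\in(p_0-1,2p_0-1)$, Theorem~\ref{thm:xd} produces a strong solution $v\in\sW^{1,2}_{p_0}(\Omega_T,x_d^{\gamma_1'}dz)$ of $\sL_0 v=\mu(x_d)(f+h)\mathbf{1}_{Q_{6\rho}^+(z_0)}$ on $\Omega_T$ with zero Dirichlet data, and the identity between the weights converts the $x_d^{\gamma_1'}$-weighted derivative estimates into
\[
\|V\|_{L_{p_0}(\Omega_T,\mu_1)}\le N\|\chi^{\alpha-\beta_0}(f+h)\|_{L_{p_0}(Q_{6\rho}^+(z_0),\mu_1)}.
\]
The remainder $w:=u-v$ is homogeneous for $\sL_0$ with vanishing lateral trace, and the small-time-support of $u$ lets it be viewed as a solution on a cylinder large enough for Lemma~\ref{oscil-lemma-1} to apply at scale $14\rho$.

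Writing $U=V+W$, the triangle inequality, Lemma~\ref{oscil-lemma-1} applied to $W$, and the volume ratio \eqref{Q-compared} yield, exactly as in the proof of Lemma~\ref{oscil-lemma-2},
\[
(|U-(U)_{Q_{\kappa\rho}^+,\mu_1}|)_{Q_{\kappa\rho}^+,\mu_1}\le N\kappa^\theta(|U|^{p_0})_{Q_{14\rho}^+,\mu_1}^{1/p_0}+N\kappa^{-(d+(\gamma_1)_++2-\alpha)/p_0}(|V|^{p_0})^{1/p_0}_{Q_{14\rho}^+,\mu_1},
\]
and Jensen's inequality upgrades $L_{p_0}$ to $L_p$ in the first average. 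The contribution of $f$ to $V$ produces the last term of \eqref{U-osc-gen} after normalizing by $\mu_1(Q_{14\rho}^+)^{1/p_0}$ and passing from $Q_{6\rho}^+$ to $Q_{14\rho}^+$ via doubling. Claim \eqref{Uall-osc-gen} follows by the same argument, using Remark~\ref{all-oscilla-est} in place of Lemma~\ref{oscil-lemma-1} and the gradient estimate \eqref{eq3.09} of Theorem~\ref{thm:xd} to control the extra $\lambda^{1/2}\chi^{\alpha/2-\beta_0}Dv$ entry of $V'$.

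The main work is the $h$-bound
\[
\|\chi^{\alpha-\beta_0}h\|_{L_{p_0}(Q_{6\rho}^+,\mu_1)}\le N\bigl(a^\#_{\rho_0}(z_0)^{1/p_0-1/p}+\rho_1^{(2-\alpha)(1-1/p_0)}\bigr)\mu_1(Q_{6\rho}^+)^{1/p_0}(|U|^p)^{1/p}_{Q_{6\rho}^+,\mu_1},
\]
obtained by two complementary H\"older arguments applied to $\int_{Q_{6\rho}^+}\omega^{p_0}|U|^{p_0}\mu_1$. In the \emph{VMO route}, H\"older with exponents $p/p_0,p/(p-p_0)$ combined with the uniform bound $\omega\le 2\nu^{-1}$ reduces $\omega^{p_0p/(p-p_0)}$ to a constant multiple of $\omega$, and Assumption~\ref{assumption:osc}$(\rho_0,\gamma_1,\delta)$ (applicable for $6\rho\le\rho_0$) delivers the factor $a^\#_{\rho_0}(z_0)^{(p-p_0)/p}$, which after taking $p_0$-th roots becomes $a^\#_{\rho_0}(z_0)^{1/p_0-1/p}$. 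In the \emph{support route}, one discards smallness of $\omega$ and uses instead that $U$ vanishes outside the time slab $|t-t_1|<(\rho_0\rho_1)^{2-\alpha}$; a H\"older argument produces a factor controlled by a suitable power of $\mu_1(\mathrm{supp}\,U\cap Q_{6\rho}^+)/\mu_1(Q_{6\rho}^+)$, which by the product structure of $\mu_1$ in $(t,x)$ is bounded by a power of $\rho_1^{2-\alpha}$. Taking the minimum of the two routes and combining with the $V$-estimate above yields the stated bound. The main obstacle is careful bookkeeping: one must match weighted volumes, averages, and $L_{p_0}$-to-$L_p$ exchanges so that the exponents $1/p_0-1/p$ and $(2-\alpha)(1-1/p_0)$ emerge precisely, and one must justify that $w=u-v$ may legitimately be treated as a homogeneous solution on the radius-$14\rho$ cylinder, which is exactly where the time-support hypothesis is used.
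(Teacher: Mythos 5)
You correctly identify the perturbation and decomposition structure for the small-$\rho$ regime, and your ``VMO route'' matches the paper's Case~1 ($\rho<\rho_0/14$): freeze coefficients at scale $6\rho$, absorb the deviation into $\tilde{f}$, H\"older with exponents $p/p_0$ and $p/(p-p_0)$ plus the $L^\infty$ bound on the coefficient oscillation to extract $a^\#_{\rho_0}(z_0)^{1/p_0-1/p}$, and then apply Lemma~\ref{oscil-lemma-2}. That part is fine. Where the proposal breaks down is in how you handle $\rho\ge\rho_0/14$, and there are three concrete issues.

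First, your claim that the small time support of $u$ ``lets it be viewed as a solution on a cylinder large enough for Lemma~\ref{oscil-lemma-1} to apply at scale $14\rho$'' does not hold. The support hypothesis only says $u$ vanishes outside a thin time slab; it says nothing about $u$ solving the PDE outside $Q_{6\rho}^+(z_0)$. Once you form $w=u-v$ with the right-hand side cut off at $Q_{6\rho}^+(z_0)$, $\sL_0 w=0$ is only known on $Q_{6\rho}^+(z_0)$, so Lemma~\ref{oscil-lemma-1} cannot be invoked at the larger scale. The paper sidesteps this entirely: for $\rho\ge\rho_0/14$ there is no decomposition, no coefficient freezing, and no auxiliary $v$; instead one uses the trivial bound $\fint|U-(U)|\le 2\fint|U|$, the ratio \eqref{Q-compared}, H\"older on $\fint_{Q_{14\rho}^+}|U|\,\mu_1$ with exponents $p_0$ and $p_0'$ applied against $\mathbf{1}_\Gamma$, and the time support, which immediately produces $\rho_1^{(2-\alpha)(1-1/p_0)}(|U|^{p_0})^{1/p_0}$.

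Second, for $\rho\ge\rho_0/6$ the functions $[a_{ij}]_{6\rho,z'}$ from Assumption~\ref{assumption:osc} are not even defined (the assumption only provides them for radii below $\rho_0$); the freezing step is ill-posed there. Your proposal does not address this, whereas the paper's Case~2 needs no frozen coefficients at all.

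Third, the exponent on $\rho_1$ produced by your ``support route'' is wrong. Bounding $\|\chi^{\alpha-\beta_0}h\|_{L_{p_0}(Q_{6\rho}^+,\mu_1)}$ via H\"older with exponents $p/p_0$ and $p/(p-p_0)$ yields a factor $\bigl(\rho_0\rho_1/\rho\bigr)^{(2-\alpha)(1/p_0-1/p)}$, not $\rho_1^{(2-\alpha)(1-1/p_0)}$. The paper's exponent $(2-\alpha)(1-1/p_0)$ arises from the H\"older split with exponents $p_0,p_0'$ applied directly to $\fint|U|\mathbf{1}_\Gamma$, followed by Jensen to upgrade $p_0$ to $p$ --- a different split. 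Since $1/p_0-1/p$ and $1-1/p_0$ are in general incomparable, ``taking the minimum of the two routes'' does not recover the stated inequality \eqref{U-osc-gen}. The fix is to adopt the paper's two-case structure: use your VMO route only for $\rho<\rho_0/14$, and replace your support route entirely by the direct (decomposition-free) argument for $\rho\ge\rho_0/14$.
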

\begin{proof} We split the proof into two cases.

\smallskip
\noindent
{\em Case \textup{1}.} We consider $\rho < \rho_0/14$. We denote
\[
\sL_{\rho, z_0} u =[a_0]_{Q_{6\rho}'(z_0')}(x_d) u_t + \lambda [c_0]_{Q_{6\rho}'(z_0')}(x_d)u - \mu(x_d) [a_{ij}]_{Q_{6\rho}'(z_0')}(x_d) D_i D_j u
\]
and
\[
\begin{split}
\tilde{f}(z) & = f(z) + [a_{ij} - [a_{ij}]_{Q_{6\rho}'(z_0')}(x_d)] D_i D_ju \\
& \qquad +  \big[ \lambda ([c_0]_{Q_{6\rho}'(z_0')} - c_0) u + ([a_0]_{Q_{6\rho}'(z_0')} - a_0) u_t \big]/\mu(x_d).
\end{split}
\]
Then, $u \in \sW^{1,2}_{p}(Q_{6\rho}^+(z_0), x_d^{\gamma_1'}dz)$ is a strong solution of
\[
\left\{
\begin{array}{cccl}
\sL_{\rho, z_0} u  & = &\mu(x_d) \tilde{f}  & \quad \text{in} \quad Q_{6\rho}^+(z_0)\\
u & = & 0 & \quad \text{on} \quad Q_{6\rho}^+(z_0) \cap \{x_d =0\}.
\end{array} \right.
\]
We note that due to \eqref{add-assumption}, the term $a_{dd} - \bar{a}_{dd} =0$.
Therefore, by using H\"{o}lder's inequality and \eqref{con:ellipticity}, we obtain
\[
\begin{split}
& \left(\fint_{Q_{14\rho}^+(z_0)}|\chi^{\alpha-\beta_0} \big(a_{ij} - [a_{ij}]_{Q_{6\rho}'(z_0')}(x_d)\big) D_i D_ju|^{p_0} \mu_1(dz) \right)^{1/p_0} \\
 & \leq \left(\fint_{Q_{14\rho}^+(z_0)}|a_{ij} - [a_{ij}]_{Q_{6\rho}'(z_0')}(x_d)|^{pp_0/(p-p_0)}  \mu_1(dz) \right)^{\frac{1}{p_0} -\frac{1}{p}} \\
 & \qquad \times \left(\fint_{Q_{14\rho}^+(z_0)}|\chi^{\alpha-\beta_0} DD_{x'}u|^{p} \mu_1(dz)\right)^{1/p} \\
& \leq N a_{\rho_0}^{\#}(z_0)^{\frac{1}{p_0} - \frac{1}{p}} \left(\fint_{Q_{14\rho}^+(z_0)}|\chi^{\alpha-\beta_0} DD_{x'}u|^{p} \mu_1(dz)\right)^{1/p}.
 \end{split}
\]
By a similar calculation using \eqref{con:mu}, we also obtain the estimate for the term $\big[ \lambda ([c_0]_{Q_{6\rho}'(z_0')}(x_d)  - c_0) u + ([a_{0}]_{Q_{6\rho}'(z_0')}(x_d) - a_0) u_t \big]/\mu(x_d)$. Thus,
\[
\begin{split}
 (|\chi^{\alpha-\beta_0} \tilde{f}|^{p_0})_{Q_{14\rho}^+(z_0), \mu_1}^{1/p_0} & \leq  (|\chi^{\alpha-\beta_0} f|^{p_0})_{Q_{14\rho}^+(z_0), \mu_1}^{1/p_0} \\
 & \qquad +  N a_{\rho_0}^{\#}(z_0)^{\frac{1}{p_0} - \frac{1}{p}} (|U|^{p})_{Q_{14\rho}^+(z_0), \mu_1}^{1/p}.
 \end{split}
\]
Then, applying Lemma \ref{oscil-lemma-2}, we obtain
\[
\begin{split}
& (|U - (U)_{Q_{\kappa \rho}^+(z_0), \mu_1}|)_{Q_{\kappa \rho}^+(z_0), \mu_1}\\
&\leq N  \kappa^{\theta} (|U|^{p_0})_{Q_{14 \rho}^+(z_0), \mu_1}^{1/p_0} + N \kappa^{-(d + (\gamma_1)_+ +2-\alpha)/p_0}  (|\chi^{\alpha-\beta_0} \tilde{f}|^{p_0})_{Q_{14\rho}^+(z_0), \mu_1}^{1/p_0} \\
& \leq N \big(\kappa^{\theta}  + \kappa^{-(d + (\gamma_1)_+ +2-\alpha)/p_0} a_{\rho_0}^{\#}(z_0)^{\frac{1}{p_0} - \frac{1}{p}} \big)   (|U|^{p})_{Q_{14 \rho}^+(z_0), \mu_1}^{1/p}  \\
& \qquad + N \kappa^{-(d + (\gamma_1)_+ +2-\alpha)/p_0} (|\chi^{\alpha-\beta_0} f|^{p_0})_{Q_{14\rho}^+(z_0), \mu_1}^{1/p_0}.
\end{split}
\]
Therefore, \eqref{U-osc-gen} holds. In a similar way but  applying  Remark \ref{all-oscilla-est}, we also obtain \eqref{Uall-osc-gen}.

\smallskip
\noindent
{\em Case 2.} Consider $\rho \geq \rho_0/14$.  Denoting $\Gamma = (t_1 -(\rho_0 \rho_1)^{2-\alpha}, t_1 + (\rho_0 \rho_1)^{2-\alpha})$, we apply \eqref{Q-compared} and the triangle inequality to infer that
\[
\begin{split}
& \fint_{Q_{\kappa\rho}^+(z_0)} |U - (U)_{Q_{\kappa\rho}^+(z_0), \mu_1}|
\mu_1(dz)\leq 2 \fint_{Q_{\kappa\rho}^+(z_0)} |U(z)|\mu_1(dz) \\
& \leq N \kappa^{-(d+2 -\alpha + (\gamma_1)_+)} \left(\fint_{Q_{14\rho}^+(z_0)} |U(z)|^{p_0}\mu_1(dz)\right)^{\frac 1 {p_0}} \left(\fint_{Q_{14\rho}^+(z_0)} \mathbf{1}_{\Gamma}(z) \mu_1(dz)\right)^{1-\frac 1 {p_0}} \\
& \leq N \kappa^{-(d+2 -\alpha + (\gamma_1)_+)} \rho_1^{(2-\alpha)(1-1/p_0)}\left(\fint_{Q_{14\rho}^+(z_0)} |U(z)|^{p_0}\mu_0(dz)\right)^{1/p_0} \\
& \leq N \kappa^{-(d+2 -\alpha + (\gamma_1)_+)} \rho_1^{(2-\alpha)(1-1/p_0)}(|U|^{p})_{Q_{14 \rho}^+(z_0), \mu_1}^{1/p} .
\end{split}
\]
Therefore,  \eqref{U-osc-gen} follows.  Similarly, \eqref{Uall-osc-gen} can be proved.
\end{proof}
Our next lemma gives the a priori estimates of solutions having small supports in $t$.
\begin{lemma}[Estimates of solutions having small supports] \label{small-support-sol} Let $T \in (-\infty, \infty]$, $\nu \in (0,1)$, $p, q, K \in (1, \infty)$, $\alpha \in (0, 2)$, and $\gamma_1 \in (\beta_0 -\alpha, \beta_0 -\alpha +1]$ for $\beta_0 \in {(\alpha-1}, \min\{1, \alpha\}]$. Then, there exist sufficiently small positive numbers $\delta$ and  $\rho_1$, depending on $d, \nu, p, q, K, \alpha{,\beta_0}$, and $\gamma_1$, such that the following assertion holds. Suppose that $\omega_0 \in A_q(\bR)$, $\omega_1 \in A_p(\bR^d_+, \mu_1)$ with
\[
[\omega_0]_{A_q(\bR)} \leq K \qquad \text{and} \qquad [\omega_1]_{A_p(\bR^d_+, \mu_1)} \leq K.
\]
Suppose that \eqref{con:mu}, \eqref{con:ellipticity}, and \eqref{add-assumption} hold, and \textup{Assumption \ref{assumption:osc}}$(\delta, \gamma_1, \rho_0)$ holds with some $\rho_0>0$. If $u \in \sW^{1,2}_{q,p}(\Omega_T, x_d^{p(\alpha-\beta_0)} \omega\, d\mu_1)$ is a strong solution to \eqref{eq:main} with some $\lambda>0$ and a function $f\in L_{q,p}(\Omega_T, x_d^{p(\alpha-\beta_0)} \omega\, d\mu_1)$, and $u$ vanishes outside $(t_1 - (\rho_0\rho_1)^{2-\alpha}, t_1+(\rho_0\rho_1)^{2-\alpha})$ for some $t_1 \in \bR$, then
\begin{equation} \label{est-1-small-supp}
\|\chi^{-\alpha} u_t\|_{L_{q,p}} + \|D^2u\|_{L_{q,p}} + \lambda \|\chi^{-\alpha} u\|_{L_{q,p}} \leq N \|f\|_{L_{q,p}},
\end{equation}
where  $N = N(d,\nu, p, q, \alpha,{ \beta_0,}\gamma_1, K)>0$, $L_{q,p}=L_{q,p}(\Omega_T, x_d^{p(\alpha-\beta_0)} \omega\, d\mu_1)$, $\omega(t,x) =\omega_0(t)\omega_1(x)$ for $(t,x) \in \Omega_T$, and $\mu_1(dz) = x_d^{\gamma_1}\, dxdt$. Moreover, if $\beta_0 \in [0, \alpha/2]$, then it also holds that
\begin{equation} \label{est-2-small-supp}
\begin{split}
& \|\chi^{-\alpha} u_t\|_{L_{q,p}} + \|D^2u\|_{L_{q,p}} + \lambda^{1/2} \|\chi^{-\alpha/2} Du\|_{L_{q,p}}  + \lambda \|\chi^{-\alpha} u\|_{L_{q,p}} \\
& \leq N \|f\|_{L_{q,p}}.
\end{split}
\end{equation}
\end{lemma}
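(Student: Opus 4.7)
The strategy is standard in the level-set / mean-oscillation approach: convert the oscillation estimate in Lemma \ref{osc-est-small} into a sharp function bound, then apply the weighted mixed-norm Fefferman--Stein theorem (Theorem \ref{FS-thm}) together with the Hardy--Littlewood maximal function bound, and finally choose the free parameters $\kappa,\delta,\rho_1$ small enough to absorb the non-trivial terms into the left-hand side. Concretely, I would fix $p_0 \in (1,\min\{p,q\})$ close enough to $1$ so that $\gamma_1 \in (p_0(\beta_0-\alpha+1)-1, p_0(\beta_0-\alpha+2)-1)$ (the target interval for Lemma \ref{osc-est-small}); Assumption \ref{assumption:osc} then gives $a_{\rho_0}^{\#}(z_0)<\delta$ for every $z_0$. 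Taking the supremum of \eqref{U-osc-gen} over all cylinders containing a point $z$ and replacing $p$ by some $p_1\in(p_0,\min\{p,q\})$ yields the pointwise bound
\[
U^{\#}_{\mu_1}(z) \le N\bigl[\kappa^{\theta}+\kappa^{-M}\bigl(\delta^{\frac{1}{p_0}-\frac{1}{p_1}}+\rho_1^{(2-\alpha)(1-1/p_0)}\bigr)\bigr]\mathcal{M}_{\mu_1}(|U|^{p_1})^{1/p_1}(z)+ N\kappa^{-M}\mathcal{M}_{\mu_1}(|\chi^{\alpha-\beta_0}f|^{p_0})^{1/p_0}(z),
\]
where $M=(d+(\gamma_1)_++2-\alpha)/p_0$.

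Next, I would take the weighted mixed-norm $L_{q,p}(\Omega_T, x_d^{p(\alpha-\beta_0)}\omega\, d\mu_1)$ of both sides. The weight $x_d^{p(\alpha-\beta_0)}\omega_1$ lies in $A_p(\bR^d_+,\mu_1)$ with a constant depending only on $K,d,\alpha,\beta_0,\gamma_1$ because the range of $\gamma_1$ was chosen so that $p(\alpha-\beta_0)+\gamma_1$ lies in the appropriate range making the extra power an $A_p(\mu_1)$ weight and hence its product with $\omega_1$ stays in the same class. Applying Theorem \ref{FS-thm} to the left-hand side and the Hardy--Littlewood estimate to $\mathcal{M}_{\mu_1}(|U|^{p_1})^{1/p_1}$ (noting $p/p_1>1$ and $q/p_1>1$) yields
\[
\|U\|_{L_{q,p}} \le N\bigl[\kappa^{\theta}+\kappa^{-M}\bigl(\delta^{\frac{1}{p_0}-\frac{1}{p_1}}+\rho_1^{(2-\alpha)(1-1/p_0)}\bigr)\bigr]\|U\|_{L_{q,p}}+ N\kappa^{-M}\|\chi^{\alpha-\beta_0}f\|_{L_{q,p_0/p_0}}^{\mathrm{suitable}},
\]
and after routine rewriting the $f$-term equals $N\kappa^{-M}\|f\|_{L_{q,p}}$. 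I would then first choose $\kappa$ so that $N\kappa^{\theta}\le 1/4$, and afterwards choose $\delta$ and $\rho_1$ so that $N\kappa^{-M}(\delta^{1/p_0-1/p_1}+\rho_1^{(2-\alpha)(1-1/p_0)})\le 1/4$. Since $u$ has small support in $t$, $\|U\|_{L_{q,p}}$ is finite (by our regularity assumption) and can be absorbed to the left.

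This produces control of $\chi^{-\alpha}u_t$, $DD_{x'}u$, and $\lambda\chi^{-\alpha}u$ (after unpacking the definition of $U$ and recalling $\beta_0\le\min\{1,\alpha\}$). To recover $D_{dd}u$ and therefore the full $D^2 u$, I would use the equation \eqref{eq:main} divided by $\mu(x_d)a_{dd}=\mu(x_d)$ (after \eqref{add-assumption}) to express $D_{dd}u$ as a linear combination of $DD_{x'}u$, $\chi^{-\alpha}u_t$, $\lambda\chi^{-\alpha}u$, and $f$, all of which are already estimated; this gives \eqref{est-1-small-supp}. For the sharper estimate \eqref{est-2-small-supp} in the subrange $\beta_0\in(\alpha-1,\alpha/2]$ I would repeat the same argument verbatim but start from \eqref{Uall-osc-gen} in Lemma \ref{osc-est-small} with the enlarged vector $U'$ that also contains $\lambda^{1/2}\chi^{\alpha/2-\beta_0}Du$.

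\textbf{Main obstacle.} The main technical point I expect is verifying that the weight $x_d^{p(\alpha-\beta_0)}\omega_1$ genuinely lies in $A_p(\bR^d_+,\mu_1)$ with constant depending only on $K$ and the structural parameters, so that Theorem \ref{FS-thm} applies with a uniform constant across all admissible $\omega_1$. This is where the precise constraint $\gamma_1\in(\beta_0-\alpha,\beta_0-\alpha+1]$ is used, and once it is checked, the rest of the proof is the standard absorption scheme with the parameters chosen in the order $\kappa$ first, then $(\delta,\rho_1)$ together.
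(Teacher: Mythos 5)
Your overall scheme (sharp-function bound $\Rightarrow$ Fefferman--Stein $\Rightarrow$ absorption with $\kappa$, then $\delta,\rho_1$) coincides with the paper's. However, the step you flag as the ``main obstacle'' hides a genuine error, and its resolution is in fact much simpler than what you propose.

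You claim that $x_d^{p(\alpha-\beta_0)}\omega_1$ lies in $A_p(\bR^d_+,\mu_1)$ with a uniform constant ``because the range of $\gamma_1$ was chosen so that $p(\alpha-\beta_0)+\gamma_1$ lies in the appropriate range making the extra power an $A_p(\mu_1)$ weight and hence its product with $\omega_1$ stays in the same class.'' That reasoning is false: the product of two $A_p$ weights is not generically an $A_p$ weight, so even if $x_d^{p(\alpha-\beta_0)}\in A_p(\mu_1)$ (which must itself be checked), you cannot conclude $x_d^{p(\alpha-\beta_0)}\omega_1\in A_p(\mu_1)$ for an \emph{arbitrary} $\omega_1\in A_p(\mu_1)$. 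Moreover, if you really wanted to work in the norm $L_{q,p}(\Omega_T,x_d^{p(\alpha-\beta_0)}\omega\,d\mu_1)$ for the scalar functions $\chi^{-\alpha}u_t$, $DD_{x'}u$, etc., you would need oscillation/sharp-function bounds for \emph{those} quantities in the measure $\mu_1$ --- but those are not the quantities Lemma \ref{osc-est-small} controls.

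The correct resolution, which the paper uses, is that no such membership claim is needed at all. The vector $U=(\chi^{-\beta_0}u_t,\chi^{\alpha-\beta_0}DD_{x'}u,\lambda\chi^{-\beta_0}u)$ is designed precisely so that Theorem \ref{FS-thm} applies directly with the underlying measure $\mu_1$ and the weight $\omega=\omega_0\omega_1$: the hypothesis $\omega_1\in A_p(\bR^d_+,\mu_1)$, $[\omega_1]_{A_p(\mu_1)}\le K$ (after a reverse Hölder reduction to $\omega_1\in A_{p/p_1}(\mu_1)$ for some $p_1>1$) is exactly what Theorem \ref{FS-thm} requires. One then observes that, by definition of $U$ and of $\mu_1$,
\[
\|U\|_{L_{q,p}(\Omega_T,\omega\,d\mu_1)}=\|\chi^{-\alpha}u_t\|_{L_{q,p}(\Omega_T,x_d^{p(\alpha-\beta_0)}\omega\,d\mu_1)}+\|DD_{x'}u\|_{L_{q,p}(\Omega_T,x_d^{p(\alpha-\beta_0)}\omega\,d\mu_1)}+\lambda\|\chi^{-\alpha}u\|_{L_{q,p}(\Omega_T,x_d^{p(\alpha-\beta_0)}\omega\,d\mu_1)},
\]
and similarly $\|\chi^{\alpha-\beta_0}f\|_{L_{q,p}(\Omega_T,\omega\,d\mu_1)}=\|f\|_{L_{q,p}(\Omega_T,x_d^{p(\alpha-\beta_0)}\omega\,d\mu_1)}$. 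So the power $x_d^{p(\alpha-\beta_0)}$ is carried inside $U$ and $f$, not as a separate weight, and the final estimate in the desired $L_{q,p}$ norm falls out by unpacking the definition. Your proposal would work once this correction is made: apply Fefferman--Stein and Hardy--Littlewood in the $L_{q,p}(\Omega_T,\omega\,d\mu_1)$ framework (choosing $p_0<p_1<\min\{p,q\}$ so that $\omega_0\in A_{q/p_1}$, $\omega_1\in A_{p/p_1}(\mu_1)$ via reverse Hölder and so that $\gamma_1$ falls in the interval required by Lemma \ref{osc-est-small}), absorb the $U$-term by choosing $\kappa$ then $\delta,\rho_1$, and finally recover $D_{dd}u$ from the equation via \eqref{add-assumption}.
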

\begin{proof}
As $\omega_0 \in A_q((-\infty,T))$ and $\omega_1  \in A_p(\bR^d_+, d\mu_1)$, by  the reverse H\"older's inequality \cite[Theorem 3.2]{MS1981}, we find $p_1=p_1(d,p,q,\gamma_1,K)\in (1,\min(p,q))$ such that
\begin{equation} \label{eq0605_13}
\omega_0 \in A_{q/p_1}((-\infty,T)),\quad
\omega_1  \in A_{p/p_1}(\bR^d_+, \mu_1).
\end{equation}
Because $\gamma_1 \in (\beta_0 -\alpha, \beta_0 -\alpha +1]$, we can choose $p_0 \in (1, p_1)$ sufficiently closed to $1$ so that
\[
\gamma_1 \in (p_0(\beta_0-\alpha +1) -1, p_0(\beta_0-\alpha +2) -1).
\]
By \eqref{U-osc-gen} of Lemma \ref{osc-est-small}  and H\"{o}lder's inequality,  we have
\[
\begin{split}
U^{\#}_{\mu_1} \leq &  N \Big[\kappa^{\theta}  + \kappa^{-(d + (\gamma_1)_+ +2-\alpha)/p_0} \big(a_{\rho_0}^{\#}(z_0)^{\frac{1}{p_0} - \frac{1}{p_1}} + \rho_1^{(2-\alpha)(1-1/p_0)}\big) \Big]   \cM_{\mu_1}(|U|^{p_1})^{1/p_1}  \\
& \qquad + N \kappa^{-(d + (\gamma_1)_+ +2-\alpha)/p_0} \cM_{\mu_1}(|\chi^{\alpha-\beta_0} f|^{p_1})^{1/p_1}
 \quad \text{in} \quad \overline{\Omega_T}
\end{split}
\]
 for any $\kappa\in (0,1)$, where  $N = N(\nu, d, p_0, p_1, \alpha,\beta_0, \gamma_1) >0$ and $a_{\rho_0}^{\#}$ is defined in \eqref{a-sharp-def}. Therefore, it follows from Theorem \ref{FS-thm} and \eqref{eq0605_13} that
\[
\begin{split}
& \norm{U}_{L_{q,p}(\Omega_T, \omega\, d\mu_1)}\\
&  \leq  N \Big[\kappa^{\theta}  + \kappa^{-(d + (\gamma_1)_+ +2-\alpha)/p_0} \big(\delta^{\frac{1}{p_0} - \frac{1}{p_1}} + \rho_1^{(2-\alpha)(1-1/p_0)}\big) \Big] \times \\
& \qquad \qquad \times \|\cM_{\mu_1}(|U|^{p_1})^{1/p_1}\|_{L_{q,p}(\Omega_T, \omega\, d\mu_1)}   \\
&  \qquad +  N \kappa^{-(d + (\gamma_1)_+ +2-\alpha)/p_0} \| \cM_{\mu_1} (|\chi^{\alpha -\beta_0} f|^{p_1})^{\frac 1 {p_1}}\|_{L_{q,p}(\Omega_T, \omega\, d\mu_1)} \\
& \leq  N \Big[\kappa^{\theta}  + \kappa^{-(d + (\gamma_1)_+ +2-\alpha)/p_0} \big(\delta^{\frac{1}{p_0} - \frac{1}{p_1}} + \rho_1^{(2-\alpha)(1-1/p_0)}\big) \Big] \|U\|_{L_{q,p}(\Omega_T, \omega\, d\mu_1)}  \\
& \qquad +  N \kappa^{-(d + (\gamma_1)_+ +2-\alpha)/p_0} \|\chi^{\alpha-\beta_0} f\|_{L_{q,p}(\Omega_T, \omega\, d\mu_1)},
\end{split}
\]
where $N = N(d,\nu, p, q, \alpha,\beta_0, \gamma_1, K)>0$.   Now, by choosing $\kappa$ sufficiently small and then $\delta$ and $\rho_1$ sufficiently small depending on $d,\nu, p, q,\alpha, \gamma_1 , \beta_0$, and $K$ such that
\[
N\Big [\kappa^{\theta}  + \kappa^{-(d + (\gamma_1)_+ +2-\alpha)/p_0} \big(\delta^{\frac{1}{p_0} - \frac{1}{p_1}} + \rho_1^{(2-\alpha)(1-1/p_0)}\big)  \Big] <1/2,
\]
we obtain
\[
\begin{split}
& \norm{U}_{L_{q,p}(\Omega_T, \omega\, d\mu_1)}   \leq  N(d, \nu, p, q, \alpha,\beta_0, \gamma_1, K)  \|\chi^{\alpha -\beta_0} f\|_{L_{q,p}(\Omega_T, \omega\, d\mu_1)}.
\end{split}
\]
From this and the PDE in \eqref{eq:main}, we obtain
\[
\|\chi^{-\alpha} u_t\|_{L_{q,p}} + \|D^2u\|_{L_{q,p}} + \lambda \|\chi^{-\alpha} u\|_{L_{q,p}} \leq N \| f\|_{L_{q,p}}.
\]
This proves \eqref{est-1-small-supp}. The proof of  \eqref{est-2-small-supp} is similar by applying \eqref{Uall-osc-gen} instead of \eqref{U-osc-gen}.
\end{proof}

Below, we provide the proof of Theorem \ref{main-thrm}.

\begin{lemma}[A priori estimates of solutions]  \label{apriori-est-lemma} Let $T \in (-\infty, \infty]$, $ \nu \in (0,1)$, $p, q, K \in (1, \infty)$, $\alpha \in (0, 2)$, and $\gamma_1 \in (\beta_0 -\alpha, \beta_0 -\alpha +1]$ for $\beta_0 \in {(\alpha-1}, \min\{1, \alpha\}]$. Then, there exist $\delta = \delta(d, \nu, p, q, K, \alpha, \beta_0,\gamma_1)>0$ sufficiently small and $\lambda_0 = \lambda_0(d, \nu, p, q, K, \alpha,\beta_0, \gamma_1)>0$ sufficiently large such that the following assertions hold. Let $\omega_0 \in A_q(\bR)$, $\omega_1 \in A_p(\bR^d_+, \mu_1)$ satisfy
\[
[\omega_0]_{A_q(\bR)} \leq K \qquad \text{and} \qquad [\omega_1]_{A_p(\bR^d_+, \mu_1)} \leq K.
\]
Suppose that \eqref{con:mu}, \eqref{con:ellipticity}, and \eqref{add-assumption} hold, and suppose that \textup{Assumption \ref{assumption:osc}}$ (\delta, \gamma_1, \rho_0)$ holds with some $\rho_0>0$. If $u \in \sW^{1,2}_{q,p}(\Omega_T, x_d^{p(\alpha-\beta_0)}\omega\,d\mu_1)$  is a strong solution to \eqref{eq:main} with some $\lambda{\ge \lambda_0\rho_0^{\alpha-2}}$ and  $f \in L_{q,p}(\Omega_T, x_d^{p(\alpha-\beta_0)}\omega\, d\mu_1)$, then
\begin{equation} \label{main-est-1-b}
\|\chi^{-\alpha} u_t\|_{L_{q,p}} + \|D^2u\|_{L_{q,p}} + \lambda \|\chi^{-\alpha} u\|_{L_{q,p}} \leq N \|f\|_{L_{q,p}},
\end{equation}
where $\omega(t, x) = \omega_0(t) \omega_1(x)$ for $(t,x) \in \Omega_T$, $L_{q,p} = L_{q,p}(\Omega_T, x_d^{p(\alpha-\beta_0)}\omega \,d\mu_1)$, and $N = N(d, \nu, p, q, K, \alpha, \beta_0, \gamma_1)>0$. Moreover, if $\beta_0 \in {(\alpha-1}, \alpha/2]$, then it also holds that
\begin{equation} \label{main-est-2-b}
\begin{split}
& \|\chi^{-\alpha} u_t\|_{L_{q,p}} + \|D^2u\|_{L_{q,p}} + \lambda^{1/2} \|\chi^{-\alpha/2} Du\|_{L_{q,p}}  + \lambda \|\chi^{-\alpha} u\|_{L_{q,p}} \\
& \leq N \|f\|_{L_{q,p}}.
\end{split}
\end{equation}
\end{lemma}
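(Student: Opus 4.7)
The plan is to reduce this general estimate to the small-support estimate of Lemma \ref{small-support-sol} via a smooth partition of unity in the time variable, using the largeness of $\lambda$ to absorb the commutator error produced by the cutoff derivative. Let $\delta,\rho_1\in(0,1)$ be the constants from Lemma \ref{small-support-sol} for the prescribed data $(d,\nu,p,q,K,\alpha,\beta_0,\gamma_1)$, and choose a smooth partition of unity $\{\eta_k\}_{k\in\bZ}$ on $\bR$ with $\eta_k\ge 0$, $\sum_k \eta_k\equiv 1$, $\operatorname{supp}(\eta_k)\subset I_k$ with $|I_k|$ comparable to $(\rho_0\rho_1)^{2-\alpha}$, bounded overlap by some fixed $M$, and $\|\eta_k'\|_\infty\le N(\rho_0\rho_1)^{-(2-\alpha)}$. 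For each $k$ set $u_k=\eta_k u$; since $\eta_k$ depends only on $t$, the product rule gives
\[
\sL u_k = \mu(x_d)\bigl(\eta_k f + a_0\eta_k' u/\mu(x_d)\bigr) \quad \text{in } \Omega_T,
\]
with $u_k=0$ on the lateral boundary. As $u_k$ is supported in $I_k$, Lemma \ref{small-support-sol} applies and yields
\[
\|\chi^{-\alpha}\partial_t u_k\|_{L_{q,p}}+\|D^2 u_k\|_{L_{q,p}}+\lambda\|\chi^{-\alpha}u_k\|_{L_{q,p}} \le N\|\eta_k f\|_{L_{q,p}}+N\|\chi^{-\alpha}\eta_k' u\|_{L_{q,p}},
\]
where we used $|a_0|/\mu(x_d)\le \nu^{-2}x_d^{-\alpha}$.

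Next I would raise both sides to the $q$th power, sum over $k$, and exploit bounded overlap. On the right, $\sum_k \|\eta_k f\|_{L_{q,p}}^q\le N\|f\|_{L_{q,p}}^q$, and because $\sum_k |\eta_k'(t)|^q\le N(\rho_0\rho_1)^{-q(2-\alpha)}$ pointwise in $t$,
\[
\sum_k \|\chi^{-\alpha}\eta_k' u\|_{L_{q,p}}^q \le N(\rho_0\rho_1)^{-q(2-\alpha)}\|\chi^{-\alpha}u\|_{L_{q,p}}^q.
\]
On the left, noting $D^2 u_k=\eta_k D^2 u$, $\chi^{-\alpha}u_k=\eta_k\chi^{-\alpha}u$, and $\partial_t u_k=\eta_k u_t+\eta_k' u$, the lower bound $\sum_k \eta_k^q\ge M^{1-q}$ (which follows from $\sum_k\eta_k=1$ and H\"older with at most $M$-fold overlap) gives matching $\ell^q$-control and, after moving the commutator $\sum_k\|\chi^{-\alpha}\eta_k' u\|_{L_{q,p}}^q$ to the right-hand side, produces
\[
\|\chi^{-\alpha}u_t\|_{L_{q,p}}^q+\|D^2 u\|_{L_{q,p}}^q+\lambda^q\|\chi^{-\alpha}u\|_{L_{q,p}}^q \le N\|f\|_{L_{q,p}}^q + N_0(\rho_0\rho_1)^{-q(2-\alpha)}\|\chi^{-\alpha}u\|_{L_{q,p}}^q.
\]

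Finally I would choose $\lambda_0=(2N_0)^{1/q}\rho_1^{-(2-\alpha)}$; then for any $\lambda\ge\lambda_0\rho_0^{-(2-\alpha)}$ the last summand absorbs into $\lambda^q\|\chi^{-\alpha}u\|^q$ on the left, and taking a $q$th root gives \eqref{main-est-1-b}. The stronger estimate \eqref{main-est-2-b} when $\beta_0\in(\alpha-1,\alpha/2]$ is obtained identically, starting from \eqref{est-2-small-supp} instead of \eqref{est-1-small-supp}: the additional summand $\lambda^{1/2}\|\chi^{-\alpha/2}Du_k\|_{L_{q,p}}$ on the left of the small-support estimate behaves like the other three terms under the $\ell^q$-bookkeeping (no commutator arises since $D$ commutes with $\eta_k(t)$). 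The main obstacle is not an analytical one but a careful verification that the bounded-overlap partition gives matching two-sided control of $\sum_k\|\eta_k(\cdot)\|_{L_{q,p}}^q$ by $\|(\cdot)\|_{L_{q,p}}^q$, and that the only commutator term, $\eta_k' u$, is structured so as to be absorbable by the lowest-order term on the left using the threshold $\lambda\ge\lambda_0\rho_0^{-(2-\alpha)}$ that is dictated precisely by the scale $(\rho_0\rho_1)^{2-\alpha}$ of the partition.
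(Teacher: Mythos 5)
Your argument is correct and is essentially the paper's proof: partition of unity in time, reduction to the small-support lemma (Lemma \ref{small-support-sol}), and absorption of the commutator $\eta_k' u$ by the low-order term using $\lambda\ge\lambda_0\rho_0^{-(2-\alpha)}$. The only difference is cosmetic: you use a discrete partition $\{\eta_k\}$ with bounded overlap and control $\sum_k\eta_k^q\ge M^{1-q}$, whereas the paper uses a continuous averaged version — $u^{(s)}(z)=u(z)\xi(t-s)$ with $\int_{\bR}\xi(s)^q\,ds=1$, followed by integration in $s$ and Fubini — which makes the $\ell^q$-reassembly of the left-hand side an identity rather than a two-sided bound. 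Both are standard and yield the same conclusion with the same dependence of $\lambda_0$ on $\rho_1$.
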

\begin{proof}  Let $\delta, \rho_1$ be positive numbers defined in Lemma \ref{small-support-sol}, and let  $\lambda_0>$ be a number sufficiently large to be determined, depending on $d, p, q, \alpha,{\beta_0,\nu,} \gamma_1, K$. As the proof of \eqref{main-est-1-b} and of \eqref{main-est-2-b} are similar, we only prove the a priori estimate \eqref{main-est-1-b}.  We use a partition of unity argument in the time variable.
Let $\delta>0$ and $\rho_1>0$ be as in Lemma \ref{small-support-sol} and let
$$
\xi=\xi(t) \in C_0^\infty( -(\rho_0\rho_1)^{2-\alpha}, (\rho_0\rho_1)^{2-\alpha})
$$
be a  non-negative cut-off function satisfying
\begin{equation} \label{xi-0702}
\int_{\bR} \xi(s)^q\, ds =1 \qquad \text{and} \qquad  \int_{\bR}|\xi'(s)|^q\,ds \leq \frac{N}{(\rho_0\rho_1)^{q(2-\alpha)}}
\end{equation}
for some $N = N(q)>0$.
For fixed $s \in (-\infty,  \infty)$, let $u^{(s)}(z) = u(z) \xi(t-s)$ for $z = (t, x) \in \Omega_T$.
We see that $u^{(s)} \in \sW^{1,2}_p(\Omega_T,x_d^{p(\alpha-\beta_0)}\omega\, d\mu_1)$ is a strong solution of
\[
\sL u^{(s)}(z)  =\mu(x_d)  f^{(s)} (z) \quad \text{in} \quad \Omega_T
\]
with the boundary condition $u^{(s)} =0$ on $\{x_d =0\}$, where
\[
f^{(s)}(z)   = \xi(t-s) f(z)  +  \xi'(t-s) u(z)/\mu(x_d).
\]
As $\text{spt}(u^{(s)}) \subset (s -(\rho_0\rho_1)^{2-\alpha}, s+ (\rho_0\rho_1)^{2-\alpha}) \times \bR^{d}_{+}$, we apply Lemma \ref{small-support-sol} to get
\[
\begin{split}
\|\chi^{-\alpha} \partial_tu^{(s)}\|_{L_{q,p}} + \|D^2u^{(s)}\|_{L_{q,p}} + \lambda \|\chi^{-\alpha} u^{(s)}\|_{L_{q,p}} \leq N \|f^{(s)}\|_{L_{q,p}}.
\end{split}
\]
Then, by integrating the $q$-th power of this estimate with respect to $s$, we get
\begin{align}\notag
& \int_{\bR}\Big(|\chi^{-\alpha} \partial_tu^{(s)}\|_{L_{q,p}}^q + \|D^2u^{(s)}\|_{L_{q,p}}^q + \lambda^q \|\chi^{-\alpha} u^{(s)}\|_{L_{q,p}}^q\Big)\, ds\\  \label{par-int-0515}
&  \leq N\int_{\bR} \|f^{(s)}\|_{L_{q,p}}^q\, ds
\end{align}
where  $N = N(d,\nu, p, q, K, \alpha,{ \beta_0,}\gamma_1)>0$. Now, by the Fubini theorem and \eqref{xi-0702}, it follows that
\[
\begin{split}
& \int_{\bR}\|\chi^{-\alpha} \partial_tu^{(s)}\|_{L_{q,p}}^q\, ds\\
& = \int_{\bR} \left(\int_{-\infty}^T \|\chi^{-\alpha}u_t(t,\cdot)\|_{L_p(\bR^d_+, x_d^{p(\alpha-\beta_0)} \omega_1\, d\mu_1)}^q \omega_0(t) \xi^q(t-s)\, dt \right)\, ds \\
&= \int_{-\infty}^T  \left( \int_{\bR}\xi^q(t-s)\, ds \right) \|\chi^{-\alpha}u_t(t,\cdot)\|_{L_p(\bR^d_+,x_d^{p(\alpha-\beta_0)} \omega_1\, d\mu_1)}^q \omega_0(t)\, dt \\
& = \|\chi^{-\alpha}u_t \|_{L_{q,p}(\bR^d_+,x_d^{p(\alpha-\beta_0)} \omega\, d\mu_1)}^q,
\end{split}
\]
and similarly
\[
\begin{split}
& \int_{\bR} \| D^2u^{(s)}\|_{L_{q,p}}^q\, ds =  \|\chi^{\alpha-\beta_0} D^2u\|_{L_{q,p}}^q, \\
& \int_{\bR} \|\chi^{-\alpha} u^{(s)}\|_{L_{q,p}}^q\, ds = \|\chi^{-\beta_0} u\|_{L_{q,p}}^q .
\end{split}
\]
Moreover,
\[
\int_{\bR} \|f^{(s)}\|_{L_{q,p}}^q\, ds \leq \|f\|_{L_{q,p}}^q + \frac{N}{(\rho_0\rho_1)^{q(2-\alpha)}}   \|\chi^{-\alpha}u\|_{L_{q,p}}^q,
\]
where \eqref{xi-0702} is used and $N = N(q)>0$. As $\rho_1$ depends on $d, \nu, p, q, K, \alpha{,\beta_0,\gamma_1}$, by combining the estimates we just derived, we infer from \eqref{par-int-0515} that
\[
\begin{split}
&  \|\chi^{-\alpha} \partial_tu \|_{L_{q,p}} + \| D^2u\|_{L_{q,p}} + \lambda \|\chi^{-\alpha} u\|_{L_{q,p}} \leq N\Big(\|f\|_{L_{q,p}}  +  \rho_0 ^{\alpha-2}   \|\chi^{-\alpha}u\|_{L_{q,p}} \Big)
\end{split}
\]
with $N = N(d,\nu, p, q, K, \alpha,{ \beta_0,}\gamma_1)>0$.
Now, we choose $\lambda_0 = 2N$. Then for $\lambda \geq \lambda_0 \rho_0^{\alpha-2}$, we have
\[
\begin{split}
&  \|\chi^{-\alpha} \partial_tu \|_{L_{q,p}} + \|D^2u\|_{L_{q,p}} + \lambda \|\chi^{-\alpha} u\|_{L_{q,p}} \leq N \| f\|_{L_{q,p}}  .
\end{split}
\]
This estimate yields \eqref{main-est-1-b}.
\end{proof}

\smallskip
Now, we have all ingredients to complete the proof of Theorem \ref{main-thrm}.
\begin{proof}[Proof of Theorem \ref{main-thrm}] The a priori estimates \eqref{main-est-1} and \eqref{main-est-2} follow from Lemma \ref{apriori-est-lemma}. Hence, it remains to prove the existence of solutions.  We employ the the technique introduced in \cite[Section 8]{Dong-Kim-18}. See also \cite[Proof of Theorem 2.3]{DP-JFA}. The proof is split into two steps, and we only outline the key ideas in each step.

\smallskip
\noindent
{\em Step \textup{1}.} We consider the case $p =q$, $\omega_0 \equiv 1$, and $\omega_1 \equiv 1$. We employ the method of continuity. Consider the operator
\[
\sL_\tau   = (1-\tau)\big(\partial_t + \lambda  - \mu(x_d) \Delta\big) + \tau \sL, \qquad \tau \in [0, 1].
\]
It is a simple calculation to check that the assumptions in Theorem \ref{main-thrm} are satisfied uniformly with respect to $\tau \in [0,1]$. Then, using the solvability in Theorem \ref{thm:xd} and the a priori estimates obtained in Lemma \ref{apriori-est-lemma}, we get the existence of a solution $u \in \sW^{1,2}_p(\Omega_T, x_d^{p(\alpha-\beta_0)}\, d\mu_1)$ to \eqref{eq:main} when $\lambda \geq \lambda_0 \rho_0^{\alpha-2}$, where $\lambda_0$ is the constant  in Lemma \ref{apriori-est-lemma}.

\smallskip
\noindent
{\em Step \textup{2}.} We combine {\em Step \textup{1}} and Lemma \ref{apriori-est-lemma} to prove the existence of {a strong} solution $u$ satisfying \eqref{main-est-1}.
Let $p_1 > \max\{p,q\}$ be sufficiently large and let $\varepsilon_1, \varepsilon_2 \in (0,1)$ be sufficiently small depending on $K, p, q$, and $\gamma_1$ such that
\begin{equation} \label{epsilon12-def}
1-\frac{p}{p_1} = \frac{1}{1+\varepsilon_1} \qquad \text{and} \qquad 1 - \frac{q}{p_1} = \frac{1}{1+\varepsilon_2},
\end{equation}
and both $\omega_1^{1+\varepsilon_1}$ and $\omega_0^{1+\varepsilon_2}$ are locally integrable and satisfy the doubling property. Specifically, there is $N_0>0$ such that
\begin{equation} \label{omega-0}
\int_{\Gamma_{2r}(t_0)} \omega_0^{1+\varepsilon_2}(s)\, ds \leq N_0 \int_{\Gamma_{r}(t_0)} \omega_0^{1+\varepsilon_2}(s)\, ds
\end{equation}
for any $r>0$ and $t_0 \in \bR$, where $\Gamma_{r}(t_0) = (t_0 -r^{2-\alpha}, \min\{t_0 + r^{2-\alpha}, T\})$. Similarly
\begin{equation} \label{omega-1-0308}
\int_{B_{2r}^+(x_0)} \omega_1^{1+\varepsilon_1}(x)\, d\mu_1 \leq N_0\int_{B_{r}^+(x_0)} \omega_1^{1+\varepsilon_1}(x)\, d\mu_1
\end{equation}
for any $r >0$ and $x_0 \in \overline{\bR^d_+}$.

\smallskip
Next, let $\{f_k\}$ be a sequence in $C_0^\infty(\Omega_T)$ such that
\begin{equation} \label{f-k-converge-0227}
\lim_{k\rightarrow \infty} \|f_k - f\|_{L_{q,p}(\Omega_T, x_d^{p(\alpha-\beta_0)}\omega\, d\mu_1)} =0.
\end{equation}
By {\em Step \textup{1}}, for each $k \in \mathbb{N}$, we can find a solution $u_k \in \sW^{1,2}_{p_1}(\Omega_T,x_d^{p_1(\alpha-\beta_0)}\, d\mu_1)$ of  \eqref{eq:main} with $f_k$ in place of $f$, where $\lambda \geq \lambda_0 \rho_0^{\alpha-2}$ for $\lambda_0 = \lambda_0(d, \nu, p_1, p_1, K, \alpha,\beta_0, \gamma_1)>0$. Observe that if the sequence $\{u_k\}$ is in $\sW^{1,2}_{q,p}(\Omega_T, x_d^{p(\alpha-\beta_0)}  \omega \, d\mu_1)$, then by applying the a priori estimates in Lemma \ref{apriori-est-lemma}, \eqref{f-k-converge-0227}, and the linearity of the equation \eqref{eq:main}, we conclude that  $\{u_k\}$ is Cauchy  in $\sW^{1,2}_{q,p}(\Omega_T, x_d^{p(\alpha-\beta_0)}  \omega\, d\mu_1)$. %From this, we
Let $u \in \sW^{1,2}_{q,p}(\Omega_T, x_d^{p(\alpha-\beta_0)}  \omega\, d\mu_1)$ be the limit of the sequence $\{u_k\}$. Then, by letting $k \rightarrow \infty$ in the equation for $u_k$, we see that $u$ solves \eqref{eq:main}.

\smallskip
It remains to prove that for each fixed $k \in \bN$, $u_k \in \sW^{1,2}_{q,p}(\Omega_T, x_d^{p(\alpha-\beta_0)}  \omega\, d\mu_1)$. To this end, let us denote
\[
D_{R} = (-R^{2-\alpha}, \min\{R^{2-\alpha}, T\}) \times B_R^+.\]
Then, let $R_0>0$ be sufficiently large such that
\begin{equation} \label{fk-spt}
\supp(f_k) \subset D_{R_0}.
\end{equation}
 We note that $R_0$ depends on $k$. It follows from \eqref{epsilon12-def}, \eqref{omega-0}, \eqref{omega-1-0308}, and  H\"{o}lder's inequality that
\[
\begin{split}
& \|u_k\|_{\sW^{1,2}_{q,p}(D_{2R_0},x_d^{p(\alpha-\beta_0)}  \omega d\mu_1)} \\
& \leq N(d, p, q, p_1, \alpha, \gamma_1, \beta_0, R_0) \|u_k\|_{\sW^{1,2}_{p_1}(D_{2R_0}, x_d^{p_1(\alpha-\beta_0)}  d\mu_1)} <\infty.
\end{split}
\]
Hence, we only need to prove
\[
\|u_k\|_{\sW^{1,2}_{q,p}(\Omega_T\setminus D_{R_0},x_d^{p(\alpha-\beta_0)}  \omega d\mu_1)}  <\infty.
\]
This is done by the localization technique employing \eqref{epsilon12-def}, \eqref{omega-0}, \eqref{omega-1-0308}, \eqref{fk-spt}, and  H\"{o}lder's inequality, using the fast decay property of solutions when the right-hand side is compactly supported.
We skip the details as the calculation is very similar to that of \cite[Section 8]{Dong-Kim-18}, and also of \cite[Step II - Proof of Theorem 2.3]{DP-JFA}. The proof of Theorem \ref{main-thrm} is completed.
\end{proof}
Next, we prove Corollary \ref{cor1}.
\begin{proof}[Proof of Corollary \ref{cor1}] It is sufficient to show that we can make the choices for $\gamma_1, \beta_0$, and $\omega_1$ to apply Theorem \ref{main-thrm} to obtain \eqref{cor-est-1} and \eqref{cor-est-2}. Indeed, the choices are similar to those in the proof of Theorem \ref{thm:xd}. To obtain \eqref{cor-est-1}, we take $\beta_0 = \min\{1, \alpha\}$, and with this choice of $\beta_0$, we have
\[
\alpha - \beta_0 = (\alpha-1)_+ \quad \text{and} \quad (\beta_0 -\alpha, \beta_0-\alpha +1] = (-(\alpha -1)_+, 1- (\alpha-1)_+].
\]
Then, let $\gamma_1 = 1- (\alpha-1)_+$ and $\gamma' = \gamma - [\gamma_1 + p(\alpha-1)_+]$.
From the choice of $\gamma_1$ and the condition on $\gamma$, we see that
\begin{equation}  \label{cond-gamma-1}
 -1-\gamma_1 < \gamma' < (1+\gamma_1) (p-1).
\end{equation}
Now, let
$ \omega_1(x) = x_d^{\gamma'}$ for $x \in \bR^d_+$.
It follows from \eqref{cond-gamma-1} that $\omega_1 \in A_p(\mu_1)$.
As Assumption $(\rho_0, \gamma_1, \delta)$ holds,
we can apply \eqref{main-est-1} to obtain \eqref{cor-est-1}.

\smallskip
Next, we prove \eqref{cor-est-2}. In this case, we choose $\beta_0 = \alpha/2$, $\gamma_1 = 1-\alpha/2$, and
\begin{equation} \label{cond-gamma-2}
  \gamma'   =\gamma - [\gamma_1 + p\alpha/2].
\end{equation}
We use the fact that $\gamma \in (p\alpha/2-1, 2p-1)$ and  \eqref{cond-gamma-2} to get \eqref{cond-gamma-1}.
As Assumption $(\rho_0, 1-\alpha/2, \delta)$ holds, by taking $\omega_1(x) = x_d^{\gamma'}$, we obtain \eqref{cor-est-2} from \eqref{main-est-2}.

\smallskip
Finally, we prove the last assertion of the corollary on the $C^{(1+\beta)/2, 1+\beta}$-regularity of the solution $u$. Note that the $C^{{(1+\beta)}/2, 1+\beta}$-regularity in the interior of $\Omega_T$ follows from the standard {parabolic} Sobolev embedding theorem. To prove the $C^{{(1+\beta)}/2, 1+\beta}$-regularity near $\{x_d=0\}$, it is sufficient to prove such regularity on $\overline{Q}_{1/2}^+$.
This follows immediately from  Proposition \ref{imbedding-prop}.
The proof  is completed.
\end{proof}

\section{Degenerate viscous Hamilton-Jacobi equations}\label{sec:5}
To demonstrate an application of the results in our paper, we consider the following degenerate viscous Hamilton-Jacobi equation
\begin{equation}\label{eq:nonlinear}
\begin{cases}
u_t+\lambda u-\mu(x_d) \Delta u=H(z,Du) \quad &\text{ in } \Omega_T,\\
u=0 \quad &\text{ on } (-\infty, T) \times \partial \bR^d_+,
\end{cases}
\end{equation}
where $\mu$ satisfies \eqref{con:mu} and $H:\Omega_T \times \bR^d \to \bR$ is a given Hamiltonian.
We assume that there exist $\beta, \ell >0$, and $h:\Omega_T \to {\overline{\bR_+}}$ such that, for all $(z,P)  \in \Omega_T \times \bR^d$,
\begin{equation} \label{G-cond}
 |H(z,P)| \leq{ \nu^{-1} (\min\{x_d^\beta,1\} |P|^{\ell}+x_d^\alpha h(z))}.
\end{equation}

The following is the main result in this section.
\begin{theorem} \label{example-thrm}
Let $p \in (1, \infty)$, $\alpha \in (0,2)$, and $\gamma \in (p(\alpha-1)_+-1, 2p-1)$.
Assume that \eqref{G-cond} holds with $\ell =1$, $\beta \geq 1$, and $h \in L_p(\Omega_T, x_d^\gamma\, dz)$.
Then, there exists $\lambda_0 = \lambda_0(d, p, \alpha, \beta, \gamma)>0$ sufficiently large such that the following assertion holds.
For any $\lambda \geq \lambda_0$, there exists a unique solution $u \in \sW^{1,2}_p(\Omega_T, x_d^\gamma \,dz)$ to \eqref{eq:nonlinear} such that
\[
\|\chi^{-\alpha} u_t\|_{L_p} + \|D^2 u\|_{L_p} +  \lambda \|\chi^{-\alpha} u\|_{L_p} \leq N \|h\|_{L_p}
\]
where $\|\cdot\|_{L_p} = \|\cdot \|_{L_p(\Omega_T, x_d^\gamma\, dz)}$ and $N = N(d, p, \alpha, \beta, \gamma)>0$.
\end{theorem}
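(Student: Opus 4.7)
The plan is a Banach fixed-point argument on $\sW^{1,2}_p(\Omega_T, x_d^\gamma\,dz)$, endowed with the norm $\|u\|_\star:=\lambda\|\chi^{-\alpha}u\|_{L_p}+\|D^2u\|_{L_p}+\|\chi^{-\alpha}u_t\|_{L_p}$ (all three $L_p$-norms taken with the weight $x_d^\gamma\,dz$, an equivalent norm for each fixed $\lambda$). For any $v$ in this space, I would define $T(v):=u$ to be the unique strong solution furnished by Corollary \ref{cor1} with right-hand side $f := H(z,Dv)/\mu(x_d)$ to the linear Dirichlet problem
\[
u_t+\lambda u-\mu(x_d)\Delta u = H(z,Dv)\quad\text{in }\Omega_T,\qquad u=0\text{ on }\{x_d=0\};
\]
a fixed point of $T$ is then precisely a solution of \eqref{eq:nonlinear}.

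The first task is to size the source. Since $\beta\ge 1>\alpha/2$ and $\mu(x_d)\sim x_d^\alpha$, splitting into $x_d\le 1$ and $x_d>1$ yields $\min\{x_d^\beta,1\}/x_d^\alpha\le N\, x_d^{-\alpha/2}$, and with $\ell=1$ the bound \eqref{G-cond} gives
\[
|H(z,Dv)/\mu(x_d)|\le N\bigl(\chi^{-\alpha/2}|Dv|+h(z)\bigr).
\]
The second task is the weighted interpolation
\[
\|\chi^{-\alpha/2}Dv\|_{L_p(\Omega_T,x_d^\gamma dz)}\le N\|\chi^{-\alpha}v\|_{L_p(\Omega_T,x_d^\gamma dz)}^{1/2}\|D^2 v\|_{L_p(\Omega_T,x_d^\gamma dz)}^{1/2},
\]
whose exponent $\theta=1/2$ is forced by the parabolic rescaling $(t,x)\mapsto(s^{2-\alpha}t,sx)$; I would establish it by integrating by parts in each spatial variable, using the Dirichlet condition, and absorbing the boundary term in the $x_d$-direction through a weighted Hardy inequality that holds in the prescribed range of $\gamma$. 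Young's inequality then delivers $\|\chi^{-\alpha/2}Dv\|_{L_p}\le\lambda^{-1/2}\|v\|_\star$.

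Combining these ingredients with the linear a priori estimate \eqref{cor-est-1} of Corollary \ref{cor1} applied to $T(v)$,
\[
\|T(v)\|_\star\le N\|H(z,Dv)/\mu\|_{L_p(\Omega_T,x_d^\gamma dz)}\le N\lambda^{-1/2}\|v\|_\star+N\|h\|_{L_p(\Omega_T,x_d^\gamma dz)}.
\]
Interpreting \eqref{G-cond} with $\ell=1$ as providing a Lipschitz dependence of $H(z,\cdot)$ in its second argument with Lipschitz constant $\nu^{-1}\min\{x_d^\beta,1\}$, the linearity of the linear problem yields the analogous estimate $\|T(v_1)-T(v_2)\|_\star\le N\lambda^{-1/2}\|v_1-v_2\|_\star$. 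Choosing $\lambda_0$ so that $N\lambda_0^{-1/2}\le 1/2$, for $\lambda\ge\lambda_0$ the map $T$ is a strict contraction on the closed ball $\{v:\|v\|_\star\le 2N\|h\|_{L_p}\}$, which $T$ maps into itself, and the Banach fixed-point theorem produces the unique $u=T(u)$, satisfying the stated estimate automatically.

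The hard part will be the weighted interpolation, especially on the portion $\gamma\in(p(\alpha-1)_+-1,\alpha p/2-1]$ of the admissible range where the gradient estimate \eqref{cor-est-2} of Corollary \ref{cor1} does not directly apply. In the integration by parts along $x_d$, I will need to verify that the boundary term $[vD_d v\,x_d^{\gamma-\alpha}]_0^\infty$ vanishes (from the Dirichlet trace and the decay at infinity of functions in the closure-defined space $\sW^{1,2}_p$) and that the correction term proportional to $\int v^2 x_d^{\gamma-\alpha-2}\,dx$ can be reabsorbed via Hardy; this is precisely where the exact hypothesized range of $\gamma$ is essential and must be tracked carefully. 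Once the interpolation is in hand, the fixed-point argument is routine.
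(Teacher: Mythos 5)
Your high-level strategy---solve the linear problem from Theorem \ref{thm:xd} and close a fixed-point / a-priori-estimate loop via a weighted interpolation for the gradient---is the same as the paper's, which proves Theorem \ref{example-thrm} by combining Theorem \ref{thm:xd} with Lemma \ref{interpolation-inq}(i). But the specific interpolation inequality you propose is false on part of the admissible range of $\gamma$, and this is where the argument breaks.

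The lossy step is the pointwise bound $\min\{x_d^\beta,1\}/x_d^\alpha \le N x_d^{-\alpha/2}$. This discards the stronger decay $x_d^{\beta-\alpha}$ (with $\beta\ge 1>\alpha/2$) for $x_d\le 1$, and leads you to claim
\[
\|\chi^{-\alpha/2}Dv\|_{L_p(\Omega_T,x_d^\gamma\,dz)}\le N\|\chi^{-\alpha}v\|_{L_p(\Omega_T,x_d^\gamma\,dz)}^{1/2}\|D^2 v\|_{L_p(\Omega_T,x_d^\gamma\,dz)}^{1/2}
\]
for all $\gamma\in(p(\alpha-1)_+-1,2p-1)$. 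This inequality fails whenever $\gamma\le \alpha p/2-1$, a nonempty subinterval of the theorem's range for every $\alpha\in(0,2)$: the paper's own Remark \ref{remark-3-range} exhibits $u(t,x)=(x+cx^{3-\alpha})\xi(x)e^{\lambda t}$ with $\chi^{-\alpha}u,D^2u\in L_p(\Omega_T,x^{\alpha p/2-1}\,dz)$ (so the right-hand side is finite) while $\chi^{-\alpha/2}D u\notin L_p(\Omega_T,x^{\alpha p/2-1}\,dz)$. So no integration-by-parts or Hardy-type argument can produce the inequality you need; you are not facing a technical boundary-term issue but a genuinely false estimate. By contrast, Lemma \ref{interpolation-inq}(i) keeps the finer weight $\min\{x_d^\beta,1\}x_d^{-\alpha}$ intact, decomposes $\{2^{-m-1}<x_d\le 2^{-m}\}$ dyadically in $x_d$, applies Gagliardo--Nirenberg slab-by-slab, and sums using that the resulting powers of $2^{-m}$ have the correct signs precisely because $\beta\ge 1$; it is this retained factor $x_d^{\beta}$ near the boundary that rescues the endpoint range of $\gamma$. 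The fixed-point framing and the Lipschitz-in-$P$ contraction argument you sketch are fine, but you need the paper's interpolation (or an equivalent replacement that does not pass through $\chi^{-\alpha/2}Dv$) to make them go through.
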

\begin{proof}
The proof follows immediately from Theorem \ref{thm:xd} and the interpolation inequality in Lemma \ref{interpolation-inq} (i) below.
\end{proof}

\begin{remark}
Overall, it is meaningful to study \eqref{eq:nonlinear} for general Hamiltonians $H$.
It is typically the case that if we consider \eqref{eq:nonlinear} in $(0,T)\times \bR^d_+$ with a nice given initial data, then we can obtain Lipschitz a priori estimates on the solutions via the classical Bernstein method or the doubling variables method under some appropriate conditions on $H$.
See \cite{CIL, AT, LMT} and the references therein.
In particular, $\|Du\|_{L^\infty([0,T]\times \bR^d_+)} \leq N$, and hence, the behavior of $H(z,P)$ for $|P|>2N+1$ is unrelated and can be modified according to our purpose.
As such, if we assume \eqref{G-cond}, then it is natural to require that $\ell=1$ because of the above.

We note however that assuming \eqref{G-cond} with $\ell=1$ and $\beta \geq 1$ in Theorem \ref{example-thrm} is rather restrictive.
It is not yet clear to us what happens when $0\leq \beta<1$, and we plan to revisit this point in the future work.
\end{remark}

To obtain a priori estimates for solutions to \eqref{eq:nonlinear}, we consider the nonlinear term $H$ as a perturbation.
We prove the following interpolation inequalities when the nonlinear term satisfies \eqref{G-cond} with $\ell=1$ and $\ell=2$, which might be of independent interests.
\begin{lemma} \label{interpolation-inq} Let $p \in (1, \infty), \beta\ge 0, \gamma>-1$, $1 \leq \ell \leq \frac{d}{d-p}$, and $\theta = \frac{1}{2}(1+\frac{d}{p}-\frac{d}{\ell p})$. Assume that $H$ satisfies \eqref{G-cond}. The following interpolation inequalities hold for every $u \in C_0^\infty(\Omega_T)$ and $\tilde f(z) = x_d^{-\alpha} {\min\{x_d^\beta,1\}|Du|^\ell}$,
\begin{itemize}
\item[(i)] If $\ell=1$ and $\beta \geq 1$,
\[
\begin{split}
\|\tilde f\|_{L_p(\Omega_T, x_d^\gamma\, dz)} & \leq N  \|\chi^{-\alpha} u\|_{L_p(\Omega_T,x_d^\gamma\, dz)}^{1/2}\|D^2 u\|_{L_p(\Omega_T,x_d^\gamma\, dz)}^{1/2} \\
& \qquad +N \|\chi^{-\alpha} u\|_{L_p(\Omega_T,x_d^\gamma\, dz)},
\end{split}
\]
where $N = N(d, p, \beta, \gamma) >0$.
\item[(ii)] If $\ell=2$, $p \geq \frac{d}{2}$, and $\beta \geq \max\{\frac{\gamma}{p}+\frac{d\alpha}{2p}, \frac{\gamma}{p}+2 +\frac{\alpha}{d} - \frac{d\alpha}{p}\}$, then
\[
\begin{split}
\|\tilde f\|_{L_p(\Omega_T, x_d^\gamma\, dz)} & \leq N  \|\chi^{-\alpha} u\|_{L_p(\Omega_T,x_d^\gamma\, dz)}^{2(1-\theta)}\|D^2 u\|_{L_p(\Omega_T,x_d^\gamma\, dz)}^{2 \theta} \\
& \qquad +N \|\chi^{-\alpha} u\|_{L_p(\Omega_T,x_d^\gamma\, dz)}^2,
\end{split}
\]
where $N = N(d, p, \beta, \gamma) >0$.
\end{itemize}
\end{lemma}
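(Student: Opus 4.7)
\emph{Part (i).} The starting observation is elementary: for $\beta \geq 1$ and $\alpha \in (0,2)$, distinguishing $x_d \leq 1$ (where $x_d^{\beta} \leq x_d^{\alpha/2}$ since $\beta \geq 1 > \alpha/2$) from $x_d > 1$ (where $1 \leq x_d^{\alpha/2}$) yields the pointwise bound $x_d^{-\alpha}\min\{x_d^\beta, 1\} \leq x_d^{-\alpha/2}$. Hence $\tilde f \leq \chi^{-\alpha/2}|Du|$ and the problem reduces to the weighted Gagliardo--Nirenberg inequality
\[
\|\chi^{-\alpha/2}Du\|_{L_p(\Omega_T, x_d^\gamma\,dz)} \leq N\|\chi^{-\alpha}u\|^{1/2}\|D^2 u\|^{1/2} + N\|\chi^{-\alpha}u\|
\]
for $u \in C_0^\infty$, which I establish direction by direction. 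For a tangential index $i \neq d$, integration by parts in $x_i$ has no boundary term and the weight $G := x_d^{\gamma - \alpha p/2}$ is independent of $x_i$; a three-factor H\"older splitting of $G$ as $G^{(p-2)/p}\cdot x_d^{\gamma/p}\cdot x_d^{\gamma/p - \alpha}$ gives $\|\chi^{-\alpha/2}D_iu\|^2 \leq N\|D^2u\|\,\|\chi^{-\alpha}u\|$ directly, with no lower-order correction. For $i = d$, the derivative of $G$ produces an extra term that H\"older controls by $C\|\chi^{-\alpha/2}D_d u\|^{p-1}\|u\|_{L_p(x_d^{\gamma - \alpha p/2 - p})}$; a one-dimensional Hardy inequality in $x_d$ (taken in its standard or dual form according to the sign of $\gamma - \alpha p/2 - (p-1)$) together with Young's inequality lets me absorb the part not covered by $\|\chi^{-\alpha}u\|$ back into the left-hand side.

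\emph{Part (ii).} I use a dyadic decomposition of $\bR^d_+$ in $x_d$: with $A_k = \{2^k \leq x_d < 2^{k+1}\}$ and a partition of unity $\{\zeta_k\}_{k \in \bZ}$, the rescaling $x = 2^k y$, $v(y) = u(2^k y)$ reduces $\int_{A_k}|\tilde f|^p x_d^\gamma\,dx$ to a constant multiple of $2^{kE(k)}\int_{y_d \sim 1}|Dv|^{2p}\,dy$ with $E(k) = -\alpha p + \gamma - 2p + d + p\beta\mathbf{1}_{k<0}$. On the unit-scale annulus I apply the classical Gagliardo--Nirenberg inequality, which is valid precisely for $p \geq d/2$ with $\theta = \frac{1}{2}(1 + d/(2p))$, in the form
\[
\|Dv\|_{L_{2p}}^{2p} \leq N\|D^2 v\|_{L_p}^{2p\theta}\|v\|_{L_p}^{2p(1-\theta)} + N\|v\|_{L_p}^{2p}.
\]
After scaling back and converting the local unweighted $L_p$-norms of $v$ and $D^2v$ into weighted norms on a slight enlargement $\tilde A_k$ of $A_k$, the main contribution on $A_k$ reads
\[
N\,2^{k(-\alpha d/2 - \gamma + p\beta\mathbf{1}_{k<0})}\,\|D^2 u\|_{L_p(\tilde A_k, x_d^\gamma)}^{2p\theta}\,\|\chi^{-\alpha}u\|_{L_p(\tilde A_k, x_d^\gamma)}^{2p(1-\theta)},
\]
and an analogous expression governs the lower-order contribution with a different $k$-exponent. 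The two hypotheses on $\beta$ are tailored to make the respective geometric series converge at the boundary end $k \to -\infty$, while $\gamma > -1$ handles the decay at $k \to +\infty$. Using the monotonicity $\|\cdot\|_{L_p(\tilde A_k, x_d^\gamma)} \leq \|\cdot\|_{L_p(\Omega_T, x_d^\gamma)}$ and summing, then taking the $p$-th root, delivers the claim.

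\emph{The main obstacle} is the normal-direction step in part (i). Because $\alpha < 2$, the $\|u\|_{L_p(x_d^{\gamma - \alpha p/2 - p})}$ error produced by differentiating the weight $G$ is \emph{not} dominated pointwise near $x_d = 0$ by the admissible norm $\|\chi^{-\alpha}u\|_{L_p(x_d^\gamma)}$, so one cannot estimate it directly and must instead iterate it against $\|\chi^{-\alpha/2}D_du\|$ via Hardy and then absorb by Young. Part (ii) is essentially a bookkeeping exercise once the correct anisotropic rescaling has been identified and the two geometric-sum exponents have been matched against the hypotheses on $\beta$.
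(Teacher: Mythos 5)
Your part (ii) is essentially the paper's own proof: the paper also uses a dyadic decomposition in $x_d$ (with $\Omega_m=\{2^{-m-1}<x_d\le 2^{-m}\}$), applies the classical Gagliardo--Nirenberg inequality slab by slab, extracts the $2^{\pm m}$-powers from the weight, and checks that the four resulting exponents have the right signs so that the sum over $m$ closes via Cauchy--Schwarz in $m$. So there is nothing essentially different there.

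Your part (i), however, deviates from the paper and has a genuine gap. The paper keeps the actual weight $x_d^{\beta-\alpha}$ for $x_d\le 1$ all the way through the dyadic argument, and the hypothesis $\beta\ge 1$ is used \emph{precisely} so that the exponent of the lower-order term comes out as $p(\beta-1)\ge 0$ (this is the line ``$p(\beta-\alpha)+\gamma+d-p\ell-d\ell+p\ell\alpha-\ell\gamma=p(\beta-1)\ge 0$'' in the paper). Your first move replaces $x_d^{-\alpha}\min\{x_d^\beta,1\}$ by $x_d^{-\alpha/2}$, which only uses $\beta\ge\alpha/2$; since $\beta-\alpha\ge 1-\alpha>-\alpha/2$ for $\alpha<2$, this is a strictly lossy pointwise bound near $x_d=0$. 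After that reduction, the dyadic exponent in the error term becomes $p(\alpha/2-1)<0$, so the slab-wise sum no longer closes, and you must recover the loss analytically. This is exactly where your Hardy-plus-Young step fails: writing $G=\gamma-\alpha p/2$, the extra term produced by differentiating the weight is bounded, after first-order Hardy, by $\frac{p\,|G|}{|p-1-G|}\,\|\chi^{-\alpha/2}D_du\|_{L_p(x_d^\gamma)}^{\,p}$, and this prefactor is not uniformly below $1$ on the whole range $\gamma>-1$ (already for moderately large $\gamma$ it is $\approx p>1$, and it blows up as $G\to p-1$). So you cannot absorb it into the left-hand side, and Young's inequality does not rescue this because the offending term is already homogeneous of degree $p$ in the quantity you are trying to estimate. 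To make a direct argument work you would need something finer than a single first-order Hardy inequality on that error term (for instance an interpolation of $\|u\|_{L_p(x_d^{G-p})}$ between $\|\chi^{-\alpha}u\|_{L_p(x_d^\gamma)}$ and a second-order Hardy term), but that is not what is written. The paper's dyadic proof sidesteps all of this by never discarding the full decay $x_d^{\beta-\alpha}$ in the first place.
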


\begin{proof}
For $m\in \bZ$, set $
\Omega_m=\{z\in \Omega_T\,:\, 2^{-m-1} < x_d \leq 2^{-m}\}$.
By the Gagliado-Nirenberg interpolation inequality, for $m\in \bZ$,
\[
\|Du\|_{L_{p\ell}(\Omega_m)} \leq N \left (\|u\|_{L_p(\Omega_m)}^{1-\theta} \|D^2u\|_{L_p(\Omega_m)}^\theta + 2^{2m\theta} \|u\|_{L_p(\Omega_m)} \right).
\]
Hence, for $m\geq 0$,
\begin{align*}
&\|\chi^{\beta-\alpha} |Du|^\ell\|_{L_p(\Omega_m,x_d^\gamma\, dz)}^p
=\int_{\Omega_m} x_d^{p(\beta-\alpha)+\gamma}|Du|^{p \ell}\,dz\\
&\leq \,  2^{-m(p(\beta-\alpha)+\gamma)} \int_{\Omega_m} |Du|^{p\ell}\,dz\\
&\leq \,  N2^{-m(p(\beta-\alpha)+\gamma)} \left(\int_{\Omega_m} |u|^{p}\,dz\right)^{\ell(1-\theta)} \left(\int_{\Omega_m} |D^2u|^{p}\,dz\right)^{\ell \theta} \\
&\qquad+ N2^{-m(p(\beta-\alpha)+\gamma+d-p\ell-d\ell)} \left(\int_{\Omega_m} |u|^{p}\,dz\right)^\ell\\
&\leq \,  N2^{-m(p(\beta-\alpha)+\gamma+ p \ell \alpha(1-\theta)-\ell\gamma )} \|\chi^{-\alpha} u\|_{L_p(\Omega_m,x_d^\gamma\, dz)}^{p \ell (1-\theta)}\|D^2 u\|_{L_p(\Omega_m,x_d^\gamma\, dz)}^{p \ell\theta} \\
&\qquad+ N2^{-m(p(\beta-\alpha)+\gamma+d-p \ell-d\ell+p \ell\alpha-\ell \gamma)}\|\chi^{-\alpha} u\|_{L_p(\Omega_m,x_d^\gamma\, dz)}^{p \ell}.
\end{align*}
By performing similar computations, we get that,  for $m< 0$,
\begin{align*}
&\|\chi^{-\alpha} |Du|^\ell \|_{L_p(\Omega_m,x_d^\gamma)}^p\\
&\leq \,  N2^{-m(-p\alpha+\gamma+ p \ell \alpha(1-\theta)-\ell \gamma )} \|\chi^{-\alpha} u\|_{L_p(\Omega_m,x_d^\gamma\, dz)}^{p \ell(1-\theta)}\|D^2 u\|_{L_p(\Omega_m,x_d^\gamma\, dz)}^{p \ell \theta} \\
&\qquad + N2^{-m(-p\alpha+\gamma+d-p\ell -d\ell +p\ell\alpha-\ell \gamma)}\|\chi^{-\alpha} u\|_{L_p(\Omega_m,x_d^\gamma\, dz)}^{p\ell}.
\end{align*}
Then, if $\ell=1$ and $\beta \geq 1$, we have
\[
\begin{cases}
p(\beta-\alpha)+\gamma+ p \ell \alpha(1-\theta)-\ell \gamma =p(\beta - \frac{\alpha}{2}) \geq 0,\\
p(\beta-\alpha)+\gamma+d-p\ell-d\ell+p\ell\alpha-\ell\gamma =p(\beta-1) \geq 0,\\
-p\alpha+\gamma+ p \ell \alpha(1-\theta)-\ell \gamma = -\frac{p\alpha}{2} \leq 0,\\
-p\alpha+\gamma+d-p\ell -d\ell +p\ell \alpha-\ell \gamma=-p \leq 0.
\end{cases}
\]
We thus obtain (i).
Similarly, the above four inequalities hold true when $\ell=2$, $p \geq \frac{d}{2}$, and $\beta \geq \max\{\frac{\gamma}{p}+\frac{d\alpha}{2p}, \frac{\gamma}{p}+2 +\frac{\alpha}{d} - \frac{d\alpha}{p}\}$, which yield (ii).
\end{proof}
%\section*{Acknowledgement}
%From Lemma \ref{interpolation-inq}, we derive the following result.
%%%%%%%%%%%%%%%%%%%%%%%%%%%%%%%%%%%%%%%%%%%%%%%%%%%%%%%%%%%%%%%%%%%%%%%

\def\cprime{$'$}

\end{document}